\documentclass[11pt]{article}

\usepackage{amsmath,verbatim}
\usepackage{amsfonts}
\usepackage{amssymb}
\usepackage{amsthm,epic}
\usepackage{mathrsfs}
\usepackage{xcolor}
\usepackage{hyperref}

\usepackage{tikz} 
\usepackage[T1]{fontenc}
\usepackage{enumitem}
\usepackage{graphicx, xcolor}
\usepackage{footnote,pifont,dsfont}
\usetikzlibrary{patterns}

\renewcommand{\geq}{\geqslant}
\renewcommand{\leq}{\leqslant}
\renewcommand{\epsilon}{\varepsilon}
\newcommand{\tn}[1]{#1}

\newtheorem{thm}{Theorem}
\newtheorem{cor}[thm]{Corollary}
\newtheorem{ex}[thm]{Example}
\newtheorem{proposition}[thm]{Proposition}
\newtheorem{lem}[thm]{Lemma}

\theoremstyle{definition}
\newtheorem{defn}[thm]{Definition}
\newtheorem{conjecture}[thm]{Conjecture}

\theoremstyle{remark}

\newtheorem{rem}[thm]{Remark}
\usepackage{array}
\usepackage[font={small}]{caption}

\theoremstyle{remark}
\newtheorem*{example}{Example}

\usepackage[font={small,it}]{caption}

\colorlet{darkgreen}{green!50!black}
\definecolor{darkseagreen}{rgb}{0.56, 0.74, 0.56}
\definecolor{lightcyan}{rgb}{0.88, 1.0, 1.0}
\definecolor{lightblue}{rgb}{0.68, 0.85, 0.9}
\definecolor{palecerulean}{rgb}{0.61, 0.77, 0.89}

\newcommand{\mS}{\ensuremath{\mathcal{S}}}
\newcommand{\mR}{\ensuremath{\mathcal{R}}}

\newcommand{\mN}{\ensuremath{\mathcal{N}}}
\newcommand{\mV}{\ensuremath{\mathcal{V}}}

\newcommand{\ox}{\ensuremath{\overline{x}}}

\newcommand{\oy}{\ensuremath{\overline{y}}}

\newcommand{\mD}{\ensuremath{\mathcal{D}}}

\newcommand{\bzer}{\ensuremath{\mathbf{0}}}

\newcommand{\bt}{\mbox{\boldmath$\theta$}}

\newcommand{\oa}{\overline{a}}
\newcommand{\ob}{\overline{b}}

\newcommand{\mH}{\mathcal{H}}
\newcommand{\xx}{{x_\ell}}
\newcommand{\yy}{{y_\ell}}
\newcommand{\xs}{{x_s}}
\newcommand{\ys}{{y_s}}

\usepackage[T1]{fontenc}
\usepackage[mathscr]{eucal}
\usepackage{fullpage}

\colorlet{darkgreen}{green!50!black}

\date{May 2017}
\author{J. Courtiel\thanks{Department of Mathematics, University of British Columbia, Canada; \texttt{courtiel@lipn.univ-paris13.fr}}, S. Melczer\thanks{Cheriton School of Computer Science, University of Waterloo, Canada \& \'Ecole Normale Sup\'erieure de Lyon, France; \texttt{smelczer@uwaterloo.ca}}, M. Mishna\thanks{Department of Mathematics, Simon Fraser University, Canada; \texttt{mmishna@sfu.ca}} \& K. Raschel\thanks{CNRS \& Universit\'e de Tours, France; \texttt{kilian.raschel@univ-tours.fr}}}

\title{Weighted Lattice Walks and Universality Classes}
\begin{document}
\maketitle

\begin{abstract}
In this work we consider two different aspects of weighted walks in cones. To begin we examine a particular weighted model, known as the Gouyou-Beauchamps model. Using the theory of analytic combinatorics in several variables we obtain the asymptotic expansion of the total number of Gouyou-Beauchamps walks confined to the quarter plane. Our formulas are parametrized by weights and starting point, and we identify six different asymptotic regimes (called universality classes) which arise according to the values of the weights.  The weights allowed in this model satisfy natural algebraic identities permitting an expression of the weighted generating function in terms of the generating function of unweighted walks on the same steps. The second part of this article explains these identities combinatorially for walks in arbitrary cones and dimensions, and provides a characterization of universality classes for general weighted walks.  Furthermore, we describe an infinite set of models with non-D-finite generating function. 
\end{abstract}

\section{Introduction}
\subsection{Enumeration of walks in cones}
In recent years many enumeration formulas for lattice walks in cones have been discovered. These results have focused largely on unweighted quadrant walks with small steps, and a wide variety of techniques have been developed and adapted to treat them~\cite{BoMi10,FaRa12,BoRaSa14,MeWi16}. Ongoing works consider generalizations of this model, such as walks in higher dimensions~\cite{BoBoKaMe16}, walks with longer steps~\cite{FaRa15,BMBoMe16+}, and walks with weights~\cite{Fe14,KaYa15}. A lattice path model is comprised of a finite set of vectors, or steps, denoted $\mS$, and a cone in which the walks are contained, very often $\mathbb{R}_{\geq0}^2$. An important quantity here is the \emph{drift\/} of the model, which is the vector sum of the step set; in dimension two the drift is given by $\vec{d}=\sum_{(s_1,s_2) \in \mS} (s_1,s_2)$.

Early studies of walks on lattices~\cite{Krew65,Ge86} used ad hoc techniques, in the sense that the arguments presented were confined to a specific model or small collection of models. More recently, most analyses start from the multivariate generating function tracking the length of a walk and its end point. For some fixed cone and given model~$\mS$, we define~$e_{(i,j) \rightarrow (k,\ell)}(n)$ to be the number of walks in the cone starting at~$(i,j)$, ending at~$(k,\ell)$, and taking~$n$ steps, each of which comes from~$\mS$. One can then build the following series, each an element of $\mathbb{Q}[x,y][\![t]\!]$: 
\begin{align}
Q(x,y;t)&= \sum_{k,\ell,n\geq 0} e_{(0,0)\rightarrow (k,\ell)}(n)\, x^k\,y^\ell t^n, \label{eq:GF_arbitrary_starting_point0} \\
Q^{i,j}(x,y;t)&= \sum_{k,\ell,n\geq 0} e_{(i,j)\rightarrow (k,\ell)}(n) \, x^ky^\ell t^n.\label{eq:GF_arbitrary_starting_point}
\end{align}

The combinatorial decomposition of a walk into a shorter walk followed by a permissible step translates into a useful generating function equation. Several different approaches have been successfully applied to analyze this equation, including techniques from computer algebra, complex analytic methods, and skillful direct algebraic manipulation of the generating function equations.

Our main focus is on the asymptotic enumeration of these walks. Using probabilistic techniques, Denisov and Wachtel~\cite{DeWa15} recently determined formulas for walks in general cones that can be adapted to determine asymptotic behaviour in the case of models with zero drift (that is, when each coordinate of the drift is zero).  When the drift is non-zero, asymptotic estimates of the total number of walks are not known at the same level of generality, although there has been much recent activity for models in the quarter plane.  Singularity analysis of generating function expressions has led to new results~\cite{FaRa12, MeMi14a}, in addition to advances on the work of Denisov and Wachtel~\cite{DeWa15, Du14, GaRa16}. The techniques of Pemantle and Wilson~\cite{PemantleWilson2013} on analytic combinatorics in several variables, combined with expressions for walk generating functions as rational diagonals, have yielded explicit asymptotics for a large collection of models~\cite{MeMi16, MeWi16}, and we take a similar approach to weighted models in this work.

\subsection{Weighted walks in cones}
Given any (unweighted) model defined by a set of steps $\mS$, one can add weights associated to each step and ask what happens to asymptotic growth, the analytic or algebraic nature of the generating function, and other related questions. Formally, if we assign a weight $a_{s_1,s_2}$ to each step $(s_1,s_2)$ belonging to a model $\mathcal S$, then the weighted analogues of Equations~\eqref{eq:GF_arbitrary_starting_point0} and \eqref{eq:GF_arbitrary_starting_point} are
\begin{align}
Q_{\mathfrak a}(x,y;t)&= \sum_{\substack{w \textrm{ walk starting at }(0,0)\\\textrm{ending at }(k,\ell) \textrm{ of length }n \\ \textrm{staying in the cone}}} \, \prod_{\substack{ (s_1,s_2)\textrm{ step in }w \\ \textrm{(with multiplicity)}}} \hspace{-22pt} a_{s_1,s_2} \hspace{15pt} \, x^ky^\ell t^n, \label{eq:GFweighted} \\
Q^{i,j}_{\mathfrak a}(x,y;t)&= \sum_{\substack{w \textrm{ walk starting at }(i,j)\\\textrm{ending at }(k,\ell) \textrm{ of length }n \\ \textrm{staying in the cone}}}\, \prod_{\substack{ (s_1,s_2)\textrm{ step in }w \\ \textrm{(with multiplicity)}}} \hspace{-22pt} a_{s_1,s_2} \hspace{15pt} \,  x^ky^\ell t^n.\label{eq:GFweighted2}
\end{align}
 
Generally speaking, assigning weights to steps enables a better understanding of the different types of model behaviour. In fact, by continuously varying these weights one can often illustrate sharp transitions (or \emph{phase changes}) in asymptotic behaviour.

Initially, this framework---which is close to a probabilistic viewpoint~\cite{FaIaMa99} in which the weights are interpreted as transition probabilities---does not force a restriction on any particular choice of weights.  In order to obtain a detailed analysis, however, some restriction on the model weighting is currently necessary.  Here we focus on \emph{central weights}: by definition, a weighting is central if all paths with the same length, start and end points have the same weight.  Uniformly weighting each step gives a central weighting, but non-trivial examples also exist.  For instance, if we respectively assign the weights $1/a$, $a$, $b/a$, $a/b$ to the step set $\{ (-1,0), (1,0), (-1, 1), (1,-1)\}$, as prescribed in Figure~\ref{fig:GB_weights}, then we have defined a central weighting. In Section~\ref{sec:central} we show that every central weighting on this set of steps has this form, up to a uniform scaling of weights.

\begin{figure}\center
\includegraphics[width=.12\textwidth]{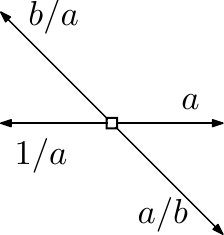}
\caption{Generic central weighting of the Gouyou-Beauchamps step set. Note that $(b/a)(a/b)=1=a/a$.}
\label{fig:GB_weights}
\end{figure} 

Central weightings have nice properties, including allowing one to express the generating function of a generic weighted model in terms of the generating function of the underlying unweighted model with weighted variables. Central weights are also related to the probabilistic notion of Cram\'er transform (which corresponds to an exponential change of measure).

\subsection{Article structure}
This article is organized into two parts. The first part is dedicated to the determination of exact asymptotic formulas for Gouyou-Beauchamps models (GB models for short) with central weightings. Our main result is Theorem~\ref{thm:main_asymptotic_result}, which we announce in the next section before considering some important consequences; Section~\ref{sec:ACSV} contains the proof of Theorem~\ref{thm:main_asymptotic_result}. Dominant asymptotic behaviour of a weighted GB model falls into one of several templates, depending on some simple relations between $a$ and $b$. We group the weighted models into \emph{universality classes\/} of models that share the same critical exponent. Figure~\ref{fig:drift_parameters_cases} illustrates the universality classes as a function of the weight parameters.

\begin{figure}\center
\includegraphics[width=0.5\textwidth]{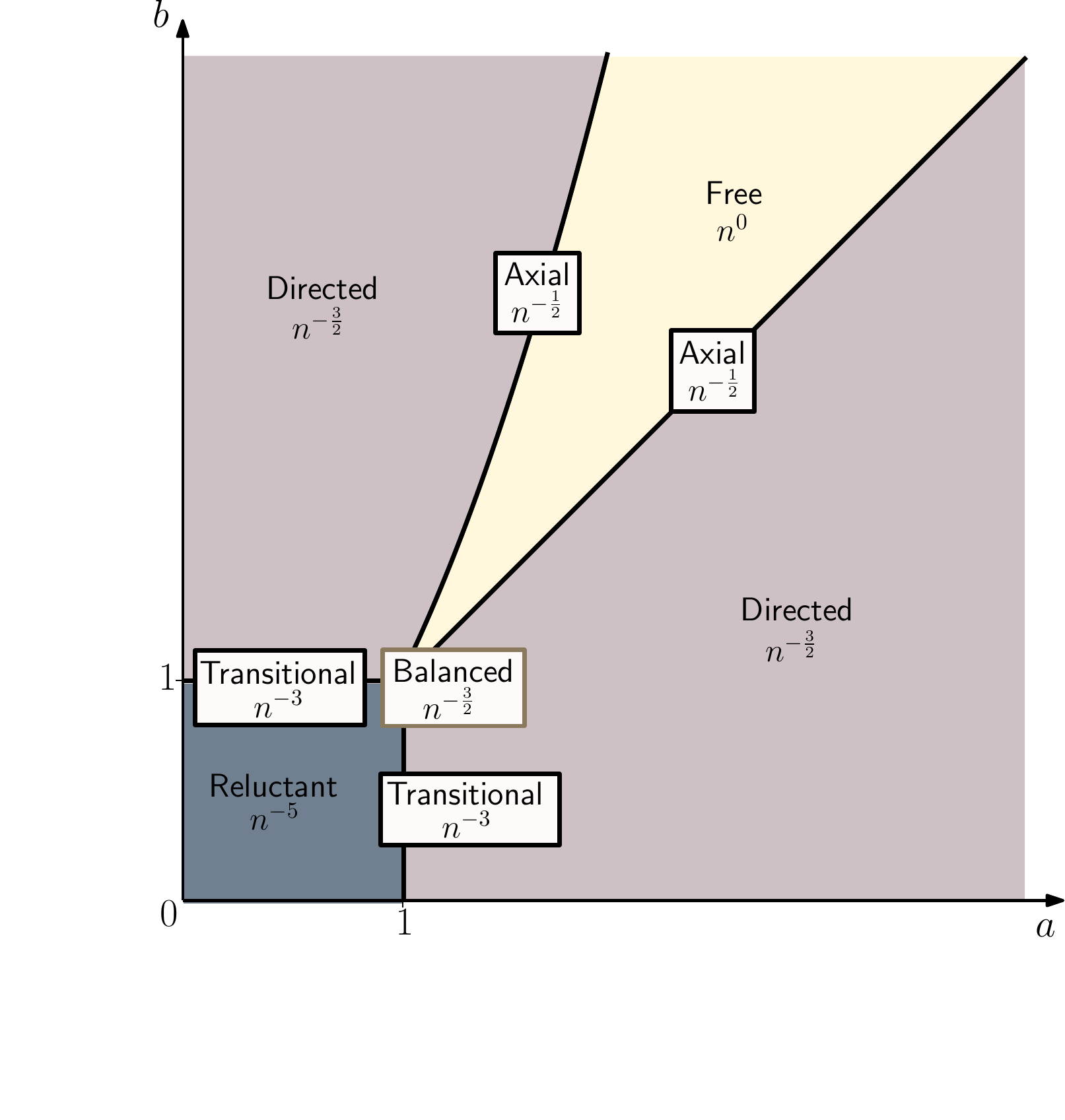}
   \vspace{-5mm}
	\caption{The polynomial growth ($n^{-\alpha}$ in Theorem~\ref{thm:main_asymptotic_result}) for the weighted GB model given as a function of the  weight parameters $a$ and $b$. Each region corresponds to a universality class. The boundaries of the $2$-dimensional regions, and their intersection form distinct regions.}
	\label{fig:drift_parameters_cases}
\end{figure}

Our results concerning general central weightings may be found in Part~\ref{part:CentralWeights}, and can be summarized as follows.
\begin{enumerate}
	\item For any model we determine which weightings of its steps are central; the number of parameters defining a central weighting is always equal to the dimension plus one. (One of these parameters corresponds to a uniform scaling of weights, meaning it can be specialized to $1$ since all of the properties we are interested in are preserved by scaling and one can recover asymptotics after this specialization.)  One should compare these results to the work of Kauers and Yatchak~\cite{KaYa15} which computationally determined families of weights leading to a D-finite generating function.  Except for one case, the weighted D-finite models they found correspond to central weights associated to specific D-finite models.
  	\item For these particular weights, we express (exactly and asymptotically) the numbers of weighted excursions in terms of the unweighted ones. These relations have strong consequences at the generating function level, and connect the (relatively unstudied) weighted and (well studied) unweighted generating functions. This is developed in Section~\ref{sec:central}.
  	\item For different weightings of the same underlying step set, one can determine information about the sub-families that have a common sub-exponential growth in the asymptotic formulas. The classes that we consider here are completely defined by drift. Studying these universality classes permits a direct connection between the ACSV enumerative approach and the probabilistic approach. This is developed in Section~\ref{sec:Diagrams}.
  	\item Finally, thanks to the above systematic construction of central weights, we are able to identify an infinite number of new models with non-D-finite generating functions. 
\end{enumerate}

\part{The asymptotic enumeration of weighted Gouyou-Beauchamps walks}
\label{part:GB}

\section{The main result and its consequences}
\subsection{Theorem statement}
We begin with a close study of the centrally weighted models whose underlying step set is the Gouyou-Beauchamps model of Figure~\ref{fig:GB_weights}.  Applying our construction (to be given in Part~\ref{part:CentralWeights}) of central weights to this particular model we obtain the weights in Figure~\ref{fig:GB_weights}, which can be expressed with two parameters $a$ and $b$:
\begin{equation}
\label{eq:GB_weighting}
  (-1,0)\longmapsto 1/a, \qquad (1,0)\longmapsto a, \qquad (-1,1)\longmapsto b/a, \qquad (1,-1)\longmapsto a/b.
\end{equation}
The generic weighted model $\mS$ has corresponding drift 
\begin{equation} 
\vec{d}=(d_x,d_y) = \left(\frac{(1+b)(a^2-b)}{ab},\frac{(a+b)(b-a)}{ab}\right). \label{eq:drift} 
\end{equation} 
The drift is an important predictor of a model's asymptotics. For non-negative integers $i$ and $j$, recall that $Q_{\mathfrak a}^{i,j}(x,y;t)$ is the generating function~\eqref{eq:GFweighted2} of a GB walk starting from~$(i,j)$. 
\begin{thm}
\label{thm:main_asymptotic_result}
Fix constants $a,b>0$.  As $n\to\infty$ the number of weighted GB walks of length $n$, starting from $(i,j)$ and ending anywhere while staying in $\mathbb R^2_+$, satisfies
\begin{equation} \label{eq:main_asymptotic_result}
[t^n]Q_{\mathfrak a}^{i,j}(1,1;t) = \kappa\cdot V^{[n]}(i,j)\cdot \rho^n\cdot  n^{-\alpha}\cdot\left(1+O\left(\frac{1}{n}\right)\right),
\end{equation}
where the exponential growth $\rho$ and the critical exponent $\alpha$ are given by Table~\ref{tab:cases}. The universal constant $\kappa$ and the harmonic function $V^{[n]}(i,j)$ are presented in Appendix~\ref{sec:values_harmonic_function}; the harmonic function $V^{[n]}(i,j)$ can depend on the parity $[n]$ of $n$ but is otherwise constant with respect to $n$.
\end{thm}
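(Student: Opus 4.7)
The plan is to derive this asymptotic expansion via the method of analytic combinatorics in several variables (ACSV) of Pemantle and Wilson, applied to a rational diagonal representation of $Q_{\mathfrak a}^{i,j}(1,1;t)$. First, I would use the central-weight structure developed in Part~\ref{part:CentralWeights} to relate the weighted generating function to its unweighted counterpart via a simple substitution in $x,y$ and a rescaling of $t$. The unweighted Gouyou-Beauchamps generating function $Q^{i,j}(x,y;t)$ admits a classical orbit-sum expression from the kernel method: it is a positive-part extraction (in $x$ and $y$) of a rational function whose denominator is $1 - t\,S(x,y)$, with $S$ the step inventory, and whose numerator is the orbit-sum polynomial associated with the reflection group of order $8$ acting on the GB step set, with starting-point dependence encoded through the factor $x^i y^j$ and its orbit images. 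Converting the positive-part operator into a diagonal yields an explicit multivariate rational function $F(x,y,t)=G(x,y,t)/H(x,y,t)$ whose main diagonal coefficient equals $[t^n]Q_{\mathfrak a}^{i,j}(1,1;t)$.

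Next, I would apply the ACSV machinery to this diagonal. The first task is to determine the minimal critical points of $F$: solutions of the logarithmic-gradient system on $\{H=0\}$ depending on the weights $a,b$ through the drift formula \eqref{eq:drift}. The local geometry of the singular variety at the contributing critical point dictates the subexponential scaling: smooth points produce an exponent $n^{-1/2}$, double multiple points arising when the critical point lies on an intersection of two smooth components (typically a coordinate face of the positive orthant) produce $n^{-1}$, and cone or triple points produce $n^{-2}$, matching the six entries of Table~\ref{tab:cases}. The regions in Figure~\ref{fig:drift_parameters_cases} correspond precisely to where the dominant critical point sits, and the transition curves between regions correspond to coalescences of smooth critical points as one or both components of the drift cross zero. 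The starting-point dependence enters through the orbit-sum numerator $G$ evaluated near the critical point; the sign alternations inside the orbit sum, combined with critical coordinates that can be roots of $-1$, produce both the harmonic function $V^{[n]}(i,j)$ and its possible dependence on the parity of $n$.

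Finally, for each regime I would compute the explicit residue or saddle-point contribution prescribed by Pemantle-Wilson, extracting the universal constant $\kappa$ and verifying the precise shape of $V^{[n]}(i,j)$ against the formulas in Appendix~\ref{sec:values_harmonic_function}. The main obstacle is the uniform case analysis across the six universality classes: one must determine in each regime which critical point is minimal, confirm that the contributions from non-minimal points are absorbed into the stated $O(1/n)$ error, and treat the boundary regimes in Figure~\ref{fig:drift_parameters_cases} where two smooth critical points coalesce onto a coordinate face and the smooth-point asymptotic degenerates into a higher-order multiple-point expansion. Since $F$ is explicit, each individual case reduces to a controlled residue and saddle computation, so the effort is concentrated in bookkeeping the geometry of the critical set rather than in any new analytic input.
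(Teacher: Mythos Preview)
Your overall strategy---express $Q_{\mathfrak a}^{i,j}(1,1;t)$ as a rational diagonal via the orbit sum and then run ACSV---is exactly the one the paper uses, and your description of how the starting point and the parity enter through the orbit-sum numerator is correct. However, your account of \emph{why} six distinct critical exponents arise is wrong in a way that would derail the actual computation.

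You attribute the exponents to the local geometry of the singular variety alone (``smooth points produce $n^{-1/2}$, double multiple points $n^{-1}$, cone or triple points $n^{-2}$''). None of these numbers matches the actual list $\alpha\in\{0,1/2,3/2,2,3,5\}$, and more importantly the mechanism is different. In the paper's analysis the balanced, transitional and reluctant regimes all have their contributing point on the \emph{same} smooth stratum $\mV_1=\{1-txyS(\bar x,\bar y)=0\}$; what distinguishes $\alpha=2,3,5$ is the \emph{order of vanishing of the numerator} $G$ at that point (orders $2$, $3$, $4$ respectively), which kills the leading constants $C_0,C_1,\dots$ in the Fourier--Laplace expansion (Proposition~\ref{prop:HighAsm}). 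This cancellation is driven by the orbit-sum numerator and by whether the factors $(1-x)$, $(1-y)$ in the denominator survive or are absorbed when $a=1$ or $b=1$; it is \emph{not} a multiple-point phenomenon. Conversely, the free case ($\alpha=0$) sits at the complete intersection $\mV_{123}$, where the residue is a constant, and the directed cases ($\alpha=3/2$) sit on the rank-two strata $\mV_{12}$ or $\mV_{13}$---so the stratum dimension is moving in the opposite direction from what you describe.

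There is also a genuine technical obstacle you do not anticipate: in the axial cases the contributing point lies on the complete intersection $\mV_{123}$ \emph{and} the numerator vanishes there, so neither the smooth formula nor the standard complete-intersection residue formula applies directly. The paper handles this by an ad hoc algebraic decomposition (e.g.\ writing $y-x=(1-x)-(1-y)$) that splits $F$ into two rational functions whose numerators no longer vanish at their respective contributing points. Without this step the axial exponent $\alpha=1/2$ cannot be read off from the geometry alone.

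In short: the skeleton of your plan is right, but you need to replace the ``geometry $\Rightarrow$ exponent'' heuristic with a careful tracking of (i) which stratum carries the minimal critical point as $(a,b)$ varies (this is Proposition~\ref{prop:contrib}), and (ii) the vanishing order of $G$ at that point, including the numerator--denominator cancellations that occur on the boundary lines $a=1$, $b=1$, $a=b$, $a^2=b$.
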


\begin{table}\center
\begin{tabular}{llcl}
Class & Condition & $\rho$ & $\alpha$ \\ \hline 
{\bf Balanced} & $a=b=1$ & 4 & 2\\[+1mm]
{\bf Free} & $\sqrt{b}<a<b$ &  $\frac{(1+b)(a^2+b)}{ab}$ & 0 \\[+1mm]
{\bf Reluctant }&$a<1$ and $b<1$ & $4$ & 5\\[+1mm]
{\bf Directed} & $b>1$ and $\sqrt{b}>a$ &$\frac{2(b+1)}{\sqrt{b}}$&3/2\\[+1mm]
               & $a>1$ and  $a>b$ & $\frac{(1+a)^2}{a}$ & 3/2\\[+1mm]
{\bf Axial} &$b=a^2>1$ &  $\frac{2(b+1)}{\sqrt{b}}$ & 1/2 \\[+1mm] 
		& $a=b>1$   & $\frac{(1+a)^2}{a}$ & 1/2\\[+1mm]
{\bf Transitional} & $a=1,b<1$ or $b=1,a<1$ & $4$ & 3
\end{tabular}
\caption{The six different universality classes for weighted GB walks, under the weighting~\eqref{eq:GB_weighting}.}
\label{tab:cases}
\end{table}
We give names to the six regions in which the exponent $\alpha$ is constant, inducing six different \textit{universality classes} for the central GB walks. The \textit{free} region corresponds to walks where the drift is in the interior of the first quadrant, translating the idea that the walks behave as if they were unrestricted. The \emph{reluctant\/} case corresponds to the domain where $a < 1$ and $b < 1$, when the walks have the smallest exponential growth. The \emph{transitional\/} domain is the boundary of the reluctant region, while the \textit{axial} domain is the boundary of the free region. The drift being $(0,0)$ corresponds to the \textit{balanced} model, and what remains are the \textit{directed} models. All these regions are defined more generally for other models in Section~\ref{sec:Diagrams}.

\subsection{Main features and applications of Theorem~\ref{thm:main_asymptotic_result}}
We now give a list of remarks around Theorem~\ref{thm:main_asymptotic_result}, illustrating applications in enumerative combinatorics, analytic combinatorics, probability theory and potential theory.

\subsubsection{Combinatorial enumeration and uniform generation}
Most plainly, Theorem \ref{thm:main_asymptotic_result} is a combinatorial result. It gives the asymptotics of the total number of walks confined to the quarter plane for the weighted Gouyou-Beauchamps model. This model is so-named because Gouyou-Beauchamps~\cite{Gouy86} discovered a simple hypergeometric formula for the (unweighted) walks that return to an axis. As shown by Gouyou-Beauchamps~\cite{Gouy89}, this lattice path model encodes several combinatorial classes: the set of walks ending anywhere are in bijection with pairs of non-intersecting prefixes of Dyck paths, and the walks ending on the axis are in bijection with Young tableaux of height at most~$4$. This quarter plane lattice path model is also in bijection with the lattice path model having step set $\mS = \{(\pm1,0),(0,\pm1)\}$ which is restricted to the region $\{(x,y) : 0 \leq x \leq y \}$.  Thus, the GB model can be considered as a walk in a \emph{Weyl chamber} (see Gessel and Zeilberger~\cite{GeZe92} for information on Weyl chamber walks).

Asymptotic results are also key in random generation of structures. Lumbroso, Mishna and Ponty~\cite{LuMiPo16} describe an algorithm for randomly sampling lattice path models, with the expected running time of the algorithm dependent on the polynomial exponent $\alpha$; thus, models in the same universality class have the same cost of random generation. Uniformly sampled walks give a picture of how the weights impact the shape of the walk, and two examples from this algorithm are given in Figure~\ref{fig:GB_example}.  In the same vein, Bacher and Sportiello~\cite{Bacher2016} have described an anticipated rejection random sampling algorithm with linear complexity when the expected first exit time is equivalent to $C n^{-\alpha}$ with $\alpha \in [0,1)$. We can see that this occurs in our context for the axial and free cases.

\begin{figure}\center
\hfill\includegraphics[width=.3\textwidth]{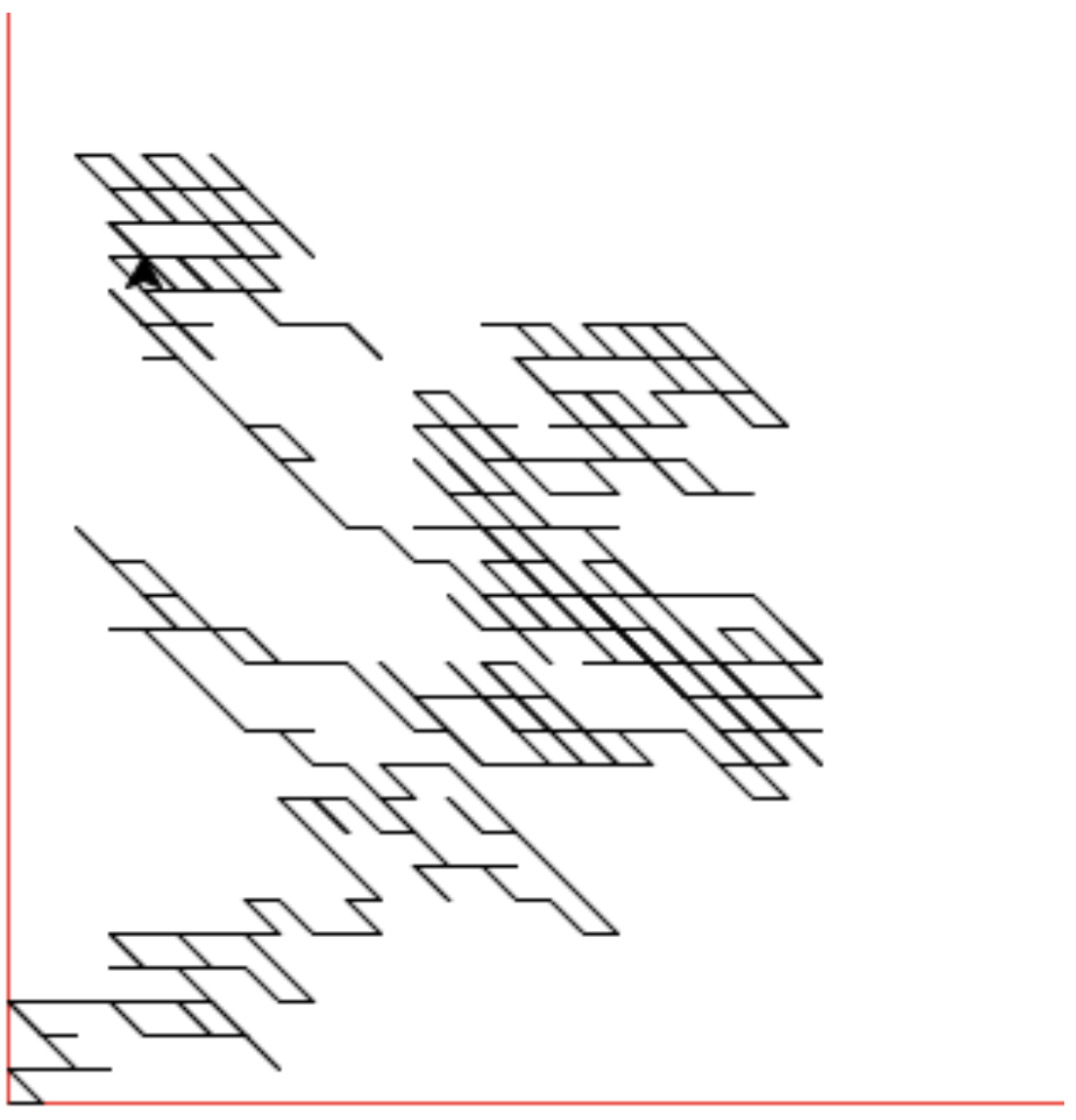}\hfill
\includegraphics[width=.3\textwidth]{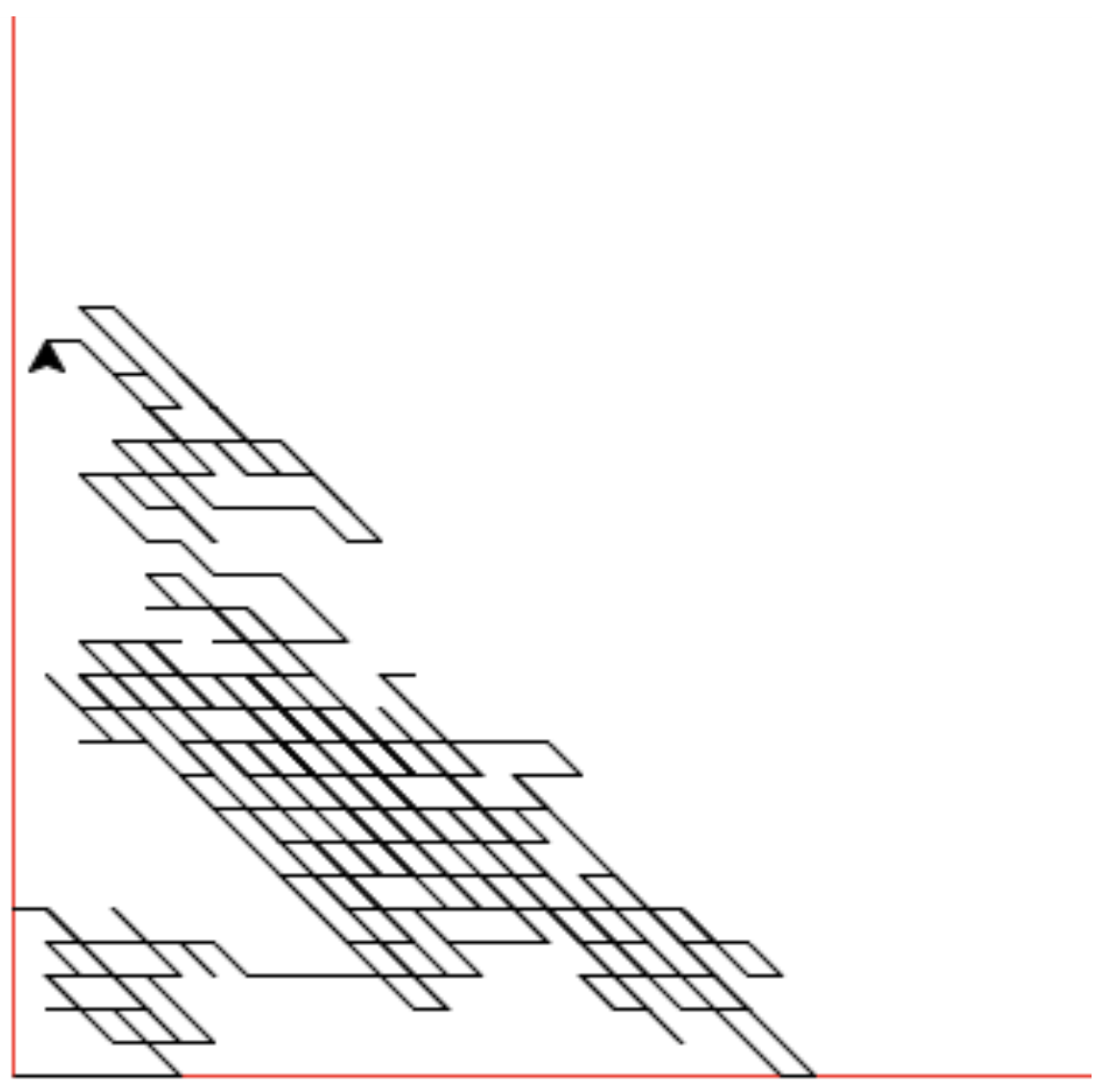}\hfill\mbox{}
\caption{Two uniformly sampled GB walks on 800 steps: an unweighted (balanced) model (left) and a weighted (reluctant) model with parameters $a=\frac{1}{\sqrt{2}}$ and $b=1$ (right).} 
\label{fig:GB_example}
\end{figure}

\subsubsection{Analytic Combinatorics of Several Variables}
Our results demonstrate the theory of analytic combinatorics in several variables (ACSV). Mainly developed by Pemantle and Wilson~\cite{PemantleWilson2002,PemantleWilson2004,PemantleWilson2008,PemantleWilson2010,PemantleWilson2013}, and collaborators, this theory is used to determine asymptotic formulas of the coefficients of a class of multivariate power series. Although several examples of applications to lattice path combinatorics have already been given in the literature---see, for example, Bressler et al.~\cite{BresslerGreenwoodPemantlePetkovsek2010} or Melczer and Wilson~\cite{MeWi16}---our study provides a continuous $2$-parameter family of examples, with many phase transitions (exhibited by the variety of exponents $\alpha$ that we obtain). We believe this offers a concrete and rich framework for ACSV, and sheds light on how the intermediary geometric objects drive the asymptotic behaviour.

\subsubsection{Excursions}
\label{sec:excursions}
Theorem~\ref{thm:main_asymptotic_result} also has consequences in probability theory. Indeed, for random walks with drift in cones, a general derivation of asymptotic expressions for the probability of a random walk to stay in a given cone up to a fixed time is still an open problem (see Garbit, Mustapha and Raschel~\cite{GaMuRa16} for work in this direction).

Generally speaking, (combinatorial) asymptotic results on excursions\footnote{An \emph{excursion\/} is a walk starting and ending at precise points. Very often it implies starting and ending at the origin.}  can be turned into (probabilistic) local limit theorems by the formula
\begin{equation}
\label{eq:link_excursions}
     {\mathbb P[(i,j)+S_n=(k,\ell),\tau>n]}=\frac{e_{(i,j) \rightarrow (k,\ell)}(n)}{\#\mathcal S^n},
\end{equation}
where $\tau$ denotes the first exit time of the random walk $(S_n)$ from the quadrant: 
\begin{equation}
\label{eq:def_exit_time}
     \tau=\inf\{n\geq0:S_n\notin\mathbb N^2\}. 
\end{equation}     
In the same way, results on walks with prescribed length can be written as
\begin{equation}
\label{eq:link_non-exit_probability}
     \mathbb P_{(i,j)}[\tau>n]=\frac{e_{(i,j) \rightarrow \mathbb N^2}(n)}{\#\mathcal S^n}=\frac{\sum_{(k,\ell)\in\mathbb N^2}e_{(i,j) \rightarrow (k,\ell)}(n)}{\#\mathcal S^n}.
\end{equation} 

Denisov and Wachtel~\cite{DeWa15} prove a local limit theorem for random walks in cones, giving the asymptotics~\eqref{eq:link_excursions} and, in the case of zero drift, obtain a precise estimate of the non-exit probability~\eqref{eq:link_non-exit_probability}. Duraj~\cite{Du14} established an asymptotic estimate of the non-exit probability for the walks in the \textit{reluctant} universality class (roughly speaking for the general case, the ones with negative drift). Furthermore, the exponential growth $\rho$ in Equation~\eqref{eq:main_asymptotic_result} is computed by Garbit and Raschel~\cite{GaRa16}: for a large class of cones and dimensions it is equal to the minimum of the Laplace transform of the increments of the random walk on the dual cone (note that the quarter plane is self dual). Further discussion on this topic occurs in Subsection~\ref{ss:gmr}.

Our contribution in this area is as follows. For a continuous family of processes---whose drift can be equal to any prescribed value---we find the exponents~$\alpha$ in Equation~\eqref{eq:main_asymptotic_result}. Interestingly, $\alpha$ can be equal to six different values: $0,1/2,3/2,2,3,$ or $5$.

We further observe a correspondence between the direction of the drift and the value of $\alpha$, as shown in Figure \ref{fig:drift}; this correlation between the drift and value of the exponent seems to become slightly more complex in the general case~\cite{GaMuRa16}. We discuss conditions which induce the correspondence in Section~\ref{sec:Diagrams}.

\subsubsection{Periodicity of leading terms} 
Interestingly, the leading term of the asymptotic expansion~\eqref{eq:main_asymptotic_result} can have periodic behaviour. More specifically, the term $V^{[n]}$ depends on the parity of $n$ in the directed,  transitional and reluctant cases. For general walks in cones, the periodicity of the leading term often exists for excursions---in periodic random walks, for instance---however when the drift is zero the leading term of the non-exit probability~\eqref{eq:link_non-exit_probability} is not periodic~\cite{DeWa15}. 

This phenomenon of periodicity, which only occurs in the case of a drift with at least one negative coordinate, is not fully understood. The reader can refer to Bostan~{et al.}~\cite[Tables 3--6]{BoChHoKaPe17} or Melczer and Wilson~\cite{MeWi16} for examples of periodic constants in the unweighted small step case.

\subsubsection{Discrete harmonic functions} 
An additional corollary of our results is a formula for a family of discrete harmonic functions. Indeed, the quantity $V^{[n]}(i,j)$ in Equation~\eqref{eq:main_asymptotic_result} satisfies a property of discrete $\rho$-harmonicity, since 
\begin{align}
\label{eq:harmonicity_V}
\begin{split}
\rho \cdot V^{[n+1]}(i,j)=(1/a)V^{[n]}(i-1,j)&+(b/a)V^{[n]}(i-1,j+1) \\
&+ aV^{[n]}(i+1,j)+(a/b)V^{[n]}(i+1,j-1).
\end{split}
\end{align}
Equation~\eqref{eq:harmonicity_V} follows from the recurrence relations satisfied by the numbers of walks $e_{(i,j) \rightarrow \mathbb N^2}(n)$, as well as the asymptotic estimate given in Theorem \ref{thm:main_asymptotic_result}.

Two different cases should be mentioned. If $V^{[n]}$ does not depend on $n$ (which is typically the situation), then $V=V^{[n]}$ is called a discrete harmonic function. Such discrete harmonic functions are of paramount interest: via the standard procedure of a Doob transform they are used to construct processes conditioned never to leave cones~\cite{DeWa15,De16,Du14,LeRa16}. These processes are important in probability theory, appearing in non-colliding processes and eigenvalues of certain random matrices, among other areas. See Section~\ref{subsubsec:conditioning_random_walks} for more details. In the second case, when $V^{[n]}$ depends on $n$, $V^{[n]}$ is called a \emph{caloric} function~\cite{BoMuSi15}.

Although the discrete recurrence~\eqref{eq:harmonicity_V} is reminiscent of the one counting walks~\eqref{eq:kernel}, the solution space of Equation~\eqref{eq:harmonicity_V} is rather different. In particular, there exist a priori infinitely many different harmonic functions solving the discrete recurrence~\cite{LeRa16} while Equation~\eqref{eq:harmonicity_V} uniquely defines a model's generating function.

Thanks to our asymptotic results we indirectly find discrete harmonic/caloric functions, listed in Appendix~\ref{sec:values_harmonic_function}, with particularly nice exponential and polynomial expressions. As an example, in the zero drift case $a=b=1$ one has 
\begin{equation}
\label{eq:universal_GB_harmonic function}
     V(i,j)=\frac{(i+1)(j+1)(i+j+2)(i+2j+3)}{6}.
\end{equation}
This is a polynomial expression of degree four, homogeneous in the variables $i+1,j+1$.  Additionally, discrete harmonic functions seem promising for the random generation of random walks confined to cones~\cite{Fu16}.

\subsubsection{Conditioning random walks}
\label{subsubsec:conditioning_random_walks}
A typical question in probability theory~\cite{DeWa15,De16,Du14,LeRa16} is to compare two different ways of conditioning a random walk $(S_n)$ to never leave a cone:
\begin{itemize}
    \item The first way is to condition on the event $\{\tau>n\}$, with $\tau$ being the first exit time~\eqref{eq:def_exit_time} from the cone, and to let $n$ go to infinity; the law of the conditioned Markov chain is characterized by
      \begin{equation}
      \label{eq:first_possibility_CRW}
      \widehat{\mathbb P}_{(i,j)}[S_1=(k,\ell)]=\lim_{n\to\infty}{\mathbb P}_{(i,j)}[S_1=(k,\ell)\vert\tau>n].
      \end{equation}
    This definition of conditioned random walks has a clear and natural probabilistic interpretation. However, except in the free case, the limit event $\{\tau=\infty\}$ has zero probability so that it is a priori unclear that the limit in the right-hand side of Equation~\eqref{eq:first_possibility_CRW} exists. 
    \item The second way consists in using a (discrete) harmonic function and the procedure of Doob transformation (see Section 13 in Doob~\cite[Chapter 6]{Do84}).  If $V$ is a $z$-harmonic function---i.e., $z\cdot V(i,j)={\mathbb E}_{(i,j)}[V(S_1),\tau>1]$---which is zero on the exterior of the cone\footnote{In the quadrant case, $V(i,j)=0$ whenever $i<0$ or $j<0$.}, then     
      \begin{equation}
      \label{eq:second_possibility_CRW}
      {\mathbb P}^V_{(i,j)}[S_1=(k,\ell)]={\mathbb P}_{(i,j)}[S_1=(k,\ell),\tau>1]\frac{V(k,\ell)}{z\cdot V(i,j)}
      \end{equation}
    defines a Markov chain which by construction stays in the cone. In Equation~\eqref{eq:second_possibility_CRW} the transition probabilities ${\mathbb P}_{(i,j)}[S_1=(k,\ell),\tau>1]$ are obtained from Equation~\eqref{eq:GB_weighting}. 
\end{itemize}
For random walks in cones with no drift, Denisov and Wachtel~\cite{DeWa15} showed that these two approaches coincide: $\widehat{\mathbb P}={\mathbb P}^V$. Similar considerations for negative drift (reluctant) random walks are given by Duraj~\cite{Du14}. 

To link back to our results, we compare these two approaches under the additional assumption that our harmonic function $V^{[n]}$ does not depend on the parity of $n$.  To show that the two viewpoints coincide for centrally weighted GB models we note that
\begin{align}
\label{eq:reformulation_conditional_probability}
\begin{split}
     \lim_{n\to\infty}{\mathbb P}_{(i,j)}[S_1=(k,\ell)\vert\tau>n]&=\lim_{n\to\infty}\mathbb P_{(i,j)}[S_1=(k,\ell),\tau>1]\frac{\mathbb P_{(k,\ell)}[\tau>n-1]}{\mathbb P_{(i,j)}[\tau>n]}\\
     &={\mathbb P}^V_{(i,j)}[S_1=(k,\ell)].
     \end{split}
\end{align}
The first equality in Equation~\eqref{eq:reformulation_conditional_probability} is simple and comes from the definition of a conditional probability together with the Markov property; the second equality follows from using Equations~\eqref{eq:main_asymptotic_result} and \eqref{eq:link_non-exit_probability} with $z=\rho/\#\mathcal S$. There are more subtleties in the case where $V^{[n]}$ depends on the parity of $n$, which we do not cover here.

A second interesting aspect of conditioned random walks is to prove that the following operations commute:
\begin{itemize}
     \item $n\to\infty$ in the conditioning by $\{\tau>n\}$,
     \item the drift $\vec{d}$ converges to a fixed drift $\vec{d}_0$.
\end{itemize}
Precisely, this amounts to proving that
\begin{equation}
\label{eq:two_limits_coincide}
     \lim_{\vec{d}\to\vec{d}_0}\lim_{n\to\infty}{\mathbb P}_{(i,j)}[S_1=(k,\ell)\vert\tau>n]=\lim_{n\to\infty}\lim_{\vec{d}\to\vec{d}_0}{\mathbb P}_{(i,j)}[S_1=(k,\ell)\vert\tau>n].
\end{equation} 
Equation~\eqref{eq:two_limits_coincide} can be interpreted as the continuity of the law of conditioned random walks with respect to the drift of the step set.  Despax~\cite{De16} considers, in any dimension but for a particular family of random walks, the case $\vec{d}_0=0$ and proves the above identity when $\vec{d}\to\vec{d}_0$ while remaining in the cone.

Using our results we can prove, for the GB model, equality in Equation~\eqref{eq:two_limits_coincide} for any value of the drift $\vec{d}_0 \in \mathbb{R}^2$ (not necessarily in the cone).  This is an easy consequence of the explicit expression of the harmonic function and Equation~\eqref{eq:reformulation_conditional_probability}.  Notice in particular that
\begin{equation*}
     \lim_{\vec{d}\to(0,0)}\frac{V(i,j)}{V(0,0)}=\lim_{(a,b)\to(1,1)}\frac{V(i,j)}{V(0,0)}=\frac{(i+1)(j+1)(i+j+2)(i+2j+3)}{6}
\end{equation*}
exists (i.e., does not depend on the way that $(a,b)\to(1,1)$). The right-hand side of the identity above, corresponding to Equation~\eqref{eq:universal_GB_harmonic function}, can thus be viewed as a universal harmonic function for the GB model.

\section{The generating function as a diagonal}
\label{sec:ACSV}

As mentioned above, we will derive our asymptotic results by representing the combinatorial sequences under consideration in terms of coefficient extractions of explicit rational functions.  In order to derive these representations we use an approach called the \emph{algebraic kernel method}\footnote{The algebraic kernel method, which relies on rational maps fixing a certain Laurent polynomial related to a lattice path model, is closely related to the more classical kernel method in which one manipulates roots of a Laurent polynomial (see Banderier and Flajolet~\cite{BaFl02} for a modern illustration of the classical kernel method).} which has been applied widely to lattice path problems in recent years~\cite{Bous02,BoPe03,BoMi10,BoBoKaMe16}; our presentation closely follows that of Bousquet-M\'elou and Mishna~\cite{BoMi10}, which is now standard.

A lattice path model is said to have \emph{small steps} if its set $\mS$ of allowable steps is a subset of $\{0,\pm1\}^2$.  In their work, Bousquet-M\'elou and Mishna proved that there are 79 non-trivial distinct small step quarter plane models (which are not equivalent to half plane models) and provided many expressions for their generating functions. A model is called \emph{singular} if there is a half-plane which contains all of its allowable steps; for instance the step set $\mS = \{(-1,1),(1,1),(1,-1)\}$ is non-trivial and singular. 

\subsection{The kernel equation}
As noted in the introduction, a straightforward functional equation for the generating function $Q(x,y;t)$ arises from the basic fact that a walk of length $n$ is either empty or a walk of length $n-1$ plus a valid step. In order to express this recurrence on the level of generating functions, we define the \emph{inventory} $S(x,y) = \sum_{(s_1,s_2)\in\mS}\,x^{s_1}\,y^{s_2}$ and the \emph{kernel\/} $K(x,y;t)= xy(1-tS(x,y))$. Note that for a non-trivial model with small steps the kernel is a polynomial.  Translating the recursive decomposition of a walk, and taking into consideration the boundary conditions, one obtains the functional equation
\begin{equation}
K(x,y;t)Q(x,y;t)= xy + K(x,0; t)Q(x,0;t)+ K(0,y; t)Q(0,y;t)- K(0,0; t)Q(0,0;t), \label{eq:kernel}
\end{equation}
which is proven in Bousquet-M\'elou and Mishna~\cite{BoMi10}.  This functional equation is known as the \emph{kernel equation} and is easily modified to handle walks with different starting position: the term $xy$ is replaced by $x^{i+1}y^{i+1}$ when the walk begins at $(i,j)$ instead of the origin. The subsequent analysis is robust enough to handle this modification, but as it becomes notationally heavy we mainly detail the case $i=j=0$.

\subsection{Series extractions and diagonals}
The kernel method associates to each step set a specific group $\mathcal{G}(\mS)$ of bi-rational transformations, generated by involutions, which fixes the kernel. Remarkably, for models with small steps the finiteness of this group is perfectly correlated to the nature of the multivariate generating functions $Q(x,y;t)$. In particular, for the 79 distinct non-trivial models with small steps $\mathcal{G}(\mS)$ is finite if and only if $Q(x,y;t)$ is D-finite. 
 
The group associated to the unweighted Gouyou-Beauchamps model has order eight and is generated by the two involutions
\[ \Psi:(x,y) \mapsto \left(\frac{y}{x}, y\right), \quad \Phi:(x,y) \mapsto \left(x, \frac{x^2}{y}\right).\] 
We shall see that this group does not substantially change under a central weighting, which is what allows a uniform diagonal expression for models under such weightings.

The kernel method applies the transformations of~$\mathcal{G}(\mS)$ to Equation~\eqref{eq:kernel} in order to generate new functional equations, which are then manipulated into an expression for $Q(x,y;t)$. Consider the ring $\mR := \mathbb{Q}((x,y))[[t]]$ of power series in $t$ whose coefficients are multivariate Laurent series in $x$ and $y$, with a generic element having the form
\begin{equation} F(x,y;t) = \sum_{n \geq 0} \left(\sum_{i,j \geq k_n} f_{i,j}(n)x^iy^j\right)t^n \label{eq:FinR} \end{equation}
for integers $k_0,k_1,k_2,\dots$. The expression for $Q(x,y;t)$ obtained by the kernel method uses the series extraction operator $[x^{\geq0}y^{\geq0}]$, which takes $F(x,y;t) \in \mR$ defined by Equation~\eqref{eq:FinR} and returns the multivariate power series 
\[ [x^\geq y^\geq]F(x,y;t)=\sum_{i,j,n\geq0} f_{i,j}(n) x^i y^j t^n. \]
Proposition 11 of Bousquet-M\'elou and Mishna~\cite{BoMi10} gives that the generating function for the GB model satisfies
\begin{equation}
Q(x,y;t)= [x^\geq y^\geq]\frac{\left(1- \ox\right)  \left(1+\ox\right) \left(1-\oy\right) \left(1-\ox^2y\right) \left(1-x\oy\right)\left(1+x\oy\right)}{1-t(x+\ox+x\oy+\ox y)}, \label{eq:Qpospart}
\end{equation}
where we write $\ox=1/x$ and $\oy=1/y$ to simplify notation.  

In order to determine the asymptotic behaviour of the number of lattice walks in a model we convert this expression for the multivariate generating function $Q(x,y;t)$ into one for the univariate generating function $Q(1,1;t)$.  The \emph{diagonal} operator $\Delta$ takes $F(x,y;t) \in \mR$ defined by Equation~\eqref{eq:FinR} and returns the univariate power series
\[ (\Delta F)(t)=\sum_{n \geq 0} f_{n,n}(n) t^n \]
whose coefficients are determined by the terms of $F(x,y;t)$ where the exponents of each variable are equal.  Equation~\eqref{eq:Qpospart} can be used to give a diagonal expression for $Q(1,1;t)$ as 
\[ [x^\geq y^\geq]F(x,y;t)\bigg|_{x=1,y=1} = \Delta\left( \frac{F\left(\ox,\oy,xyt\right)}{(1-x)(1-y)}\right) \]
whenever $F\left(\ox,\oy,xyt\right)$ is a well-defined element of $\mR$ (this fact follows from a straightforward manipulation of the power series in question; see Proposition 2.6 of Melczer and Mishna~\cite{MeMi16} for more information).  In the end, one obtains
\[ Q(1,1;t) = \Delta \left( \frac{(x+1)(\ox^2-\oy)(x-y)(x+y)}{1-xyt(x+x\oy+y\ox+\ox)} \right). \]
Following the same argument with weighted steps, where the weighted inventory 
\[ S(x,y) = \sum_{(s_1,s_2)\in\mS}(a^{s_1}b^{s_2})x^{s_1}y^{s_2} = ax + \frac{1}{ax} + \frac{ax}{by}+ \frac{by}{ax}\] 
is used, results in the diagonal expression
\begin{equation}
\label{eqn:weighted_extraction_nonan}
Q_{\mathfrak a}(1,1;t) = \Delta \left( \frac{(y-b)(a-x)(a+x)(a^2y-bx^2)(ay-bx)(ay+bx)}{a^4b^3x^2y(1-txyS(\ox,\oy))(1-x)(1-y)} \right),
\end{equation}
which is valid for any non-zero constants $a$ and $b$ (although we only consider the case when $a$ and $b$ are positive below). We can similarly deduce an expression for weighted walks parametrized by start point $(i, j)$ by modifying the kernel equation to account for these new starting points, as discussed above; only the numerator of the resulting rational diagonal depends on $i$ and $j$.  A weighted version of Equation~\eqref{eq:Qpospart} was also derived by Kauers and Yatchak~\cite{KaYa15} during their computational investigation into the analytic nature of weighted lattice path generating functions.

In order to determine asymptotics through analytic techniques, it is easiest to consider diagonals of rational functions with power series expansions.  As 
\[ \Delta\left((xyt)^{-k}F(x,y,t)\right) = t^{-k}\Delta F(x,y,t)\] 
for any natural number $k$ and $F \in \mR$, we can modify Equation~\eqref{eqn:weighted_extraction_nonan} to obtain
\begin{equation}
\label{eqn:weighted_extraction}
Q_{\mathfrak a}(1,1;t) = \frac{1}{a^4b^3t^2} \Delta \left( \frac{yt^2(y-b)(a-x)(a+x)(a^2y-bx^2)(ay-bx)(ay+bx)}{(1-txyS(\ox,\oy))(1-x)(1-y)} \right).
\end{equation}

We prove Theorem~1 by representing the coefficients of this diagonal as a multivariate complex integral, and asymptotically approximating the integral expression.  This is possible due to a multivariate generalization of Cauchy's Integral Formula (CIF).

\begin{thm}[Multivariate CIF]
Suppose that $F(x,y,t) \in \mathbb{Q}(x,y,t)$ is analytic at $(0,0,0)$ with a power series expansion $F(x,y,t) = \sum_{i,j,n \geq 0} f_{i,j}(n) x^{i}y^{j}t^{k}$ at the origin.  Then for all $n \geq 0$,
\begin{equation} f_{n,n}(n) = \frac{1}{(2\pi i)^3}\int_T \frac{F(x,y,t)}{(xyt)^n}\cdot\frac{\tn{d}x\,\tn{d}y\,\tn{d}t}{xyt}, \label{eq:CIF} \end{equation}
where $T$ is any poly-disk defined by $\{|x|=\epsilon_1,|y|=\epsilon_2,|z|=\epsilon_3\}$, with each $\epsilon_j$ sufficiently small.
\end{thm}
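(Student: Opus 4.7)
The plan is to derive the formula by applying the single-variable Cauchy Integral Formula iteratively in each of the three variables $x$, $y$, and $t$. Because $F(x,y,t)$ is rational and analytic at the origin, its singular variety is a closed algebraic subset of $\mathbb{C}^3$ avoiding $(0,0,0)$. Hence there exist radii $\epsilon_1,\epsilon_2,\epsilon_3>0$ small enough that $F$ is holomorphic on a neighborhood of the closed polydisk $\overline{D}_{\epsilon_1}\times\overline{D}_{\epsilon_2}\times\overline{D}_{\epsilon_3}$, and the power series expansion $F(x,y,t)=\sum_{i,j,m\geq 0} f_{i,j}(m)\,x^iy^jt^m$ converges absolutely and uniformly on the torus $T=\{|x|=\epsilon_1,|y|=\epsilon_2,|t|=\epsilon_3\}$.

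The first step is to substitute the power series into the triple integral, obtaining
$$\frac{1}{(2\pi i)^3}\int_T \sum_{i,j,m\geq 0} f_{i,j}(m)\, x^{i-n-1}y^{j-n-1}t^{m-n-1}\,\tn{d}x\,\tn{d}y\,\tn{d}t.$$
Uniform convergence on the compact torus $T$ then justifies interchanging the summation with the integration (a Fubini/dominated-convergence argument), after which the triple integral factors as a product of three univariate contour integrals. The elementary identity
$$\frac{1}{2\pi i}\int_{|z|=\epsilon} z^{k-1}\,\tn{d}z=\delta_{k,0}$$
selects exactly the terms with $i-n=0$, $j-n=0$, and $m-n=0$, so that only the single summand $f_{n,n}(n)$ survives.

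There is no genuine obstacle in this argument; the only point requiring care is producing a polydisk on which $F$ is holomorphic and the power series converges absolutely, but this is immediate from the assumption that $F$ is rational and analytic at the origin. Equivalently, one can view the formula as an iterated application of the one-variable CIF: first extract $[t^n]F$ by integrating over $|t|=\epsilon_3$, then extract $[y^n]$ from the resulting bivariate series by integrating over $|y|=\epsilon_2$, and finally extract $[x^n]$ by integrating over $|x|=\epsilon_1$. Each step is justified by the analyticity of the intermediate function in the remaining variables on a suitably chosen closed disk, which again follows from choosing the $\epsilon_j$ sufficiently small.
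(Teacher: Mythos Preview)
Your argument is correct and is essentially the same as the paper's: the paper simply states that the result follows from the standard Cauchy Integral Formula by induction on the number of variables, referring to Pemantle and Wilson for details, and your iterated-CIF/series-substitution argument is exactly such a derivation spelled out.
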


The proof follows from the standard Cauchy Integral Formula by induction on the number of variables; see Section 7.2 of Pemantle and Wilson~\cite{PemantleWilson2013} or a standard book on complex analysis in several variables for details.

\section{Proof of Theorem~1}
This section describes how to apply the theory of analytic combinatorics in several variables (ACSV) to the enumeration of centrally weighted GB models.  We focus only on the results of ACSV which we require, and specialize them to our context.  For a general presentation of ACSV we refer the reader to the text of Pemantle and Wilson~\cite{PemantleWilson2013} or the doctoral thesis of Melczer~\cite{Melczer2017}.  To see additional applications of ACSV to lattice path enumeration, see Melczer and Mishna~\cite{MeMi16} or Melczer and Wilson~\cite{MeWi16}.

\subsection{Minimal points}
To find an asymptotic estimate of the coefficients of a univariate generating function using classical methods of (univariate) analytic combinatorics, one typically proceeds by first identifying a finite set of singularities---called the dominant singularities---of the generating function, and then summing asymptotic contributions of each singularity which are determined by their location and nature.  In the study of analytic combinatorics in several variables, the generating function of interest---for us $Q_{\mathfrak a}(1,1;t)$---is represented indirectly as the diagonal of a rational function $F(x,y,t)$; the strategy is then to determine asymptotic behaviour from the analytic properties of $F$.

Recall from Equation~\eqref{eqn:weighted_extraction} that our rational function $F(x,y,t) = \sum_{i,j,n \geq 0} f_{i,j}(n) x^i y^j t^n$ can be written as the ratio of two polynomials $G(x,y,t)$ and $H(x,y,t)$ with 
\[ H(x,y,t) = (1-txyS(\ox,\oy))(1-x)(1-y);\]
for most choices of weights $a$ and $b$ these polynomials  are co-prime.  We define the \emph{singular variety} $\mV$ of $F$ to be the algebraic set composed of its singularities, which is
\[ \mV = \left\{(x,y,t) \in \mathbb{C}^3 : H(x,y,t) = 0 \right\}\] 
when $G$ and $H$ are co-prime.  Let $\mD$ denote the open domain of convergence of the power series representation of $F(x,y,t)$ at the origin.  Similar to the univariate case, where dominant singularities lie on a generating function's disk of convergence, the singularities in the set $\overline{\mD} \cap \mV$, which are called \emph{minimal points}, are crucial to the analysis ($\overline{\mD}$ denotes the closure of $\mD$).  The simple form of the denominator $H$ in the diagonal expression for GB walks makes it easy to identify minimal points.

\begin{lem}
\label{lem:minpts}
For the rational function $F(x,y,t)$ described by Equation~\eqref{eqn:weighted_extraction}, when $G$ and $H$ are co-prime the point $(x,y,t) \in \mV$ is minimal if and only if 
\[|x|\leq  1,\quad |y|\leq 1, \quad |t| \leq \frac{1}{|xy|S(|\ox|,|\oy|)},\]
where these three strict inequalities do not occur simultaneously.
\end{lem}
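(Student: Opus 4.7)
The plan is to work directly from the definition: a point $(x,y,t) \in \mV$ is minimal when there is no $(x', y', t') \in \mV$ with $|x'|<|x|$, $|y'|<|y|$, and $|t'|<|t|$ simultaneously. The key structural fact is that $H$ factors as $H = (1-x)(1-y)\bigl(1-tT(x,y)\bigr)$, where $T(x,y) := xyS(\ox,\oy)$ is a polynomial in $x,y$ with strictly positive coefficients (since $a,b>0$). Each of the three factors will contribute one of the three candidate inequalities via the triangle inequality $\lvert tT(x,y)\rvert \leq |t|\,T(|x|,|y|)$, which is an equality at positive real points.

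For the reverse implication I would argue by contradiction. Suppose $(x,y,t) \in \mV$ satisfies the three inequalities and that $(x',y',t') \in \mV$ has strictly smaller modulus in every coordinate. Then $|x'|<|x|\leq 1$ gives $|1-x'| \geq 1-|x'|>0$; analogously $1-y' \neq 0$; and using the monotonicity of $T$ in positive reals (its coefficients are positive) one has $|t'|\,T(|x'|,|y'|) < |t|\,T(|x|,|y|)\leq 1$, so the triangle inequality forces the third factor of $H$ at $(x',y',t')$ to be nonzero as well. All three factors are nonzero, contradicting $(x',y',t') \in \mV$. The ``not all strict'' clause of the lemma is automatic: since $(x,y,t) \in \mV$ at least one of the three factors of $H$ vanishes, and each vanishing forces its associated modulus inequality to be an equality.

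For the forward implication I would build an explicit witness $(x',y',t') \in \mV$ of strictly smaller modulus whenever one of the three inequalities fails. If $|x|>1$, take $x'=1$ (placing the point on the sheet $1-x=0$) and scale $y,t$ by a factor in $(0,1)$; symmetrically for $|y|>1$. The interesting case is $|t|\,T(|x|,|y|) > 1$: choose positive reals $r_1<|x|$ and $r_2<|y|$ close enough to $|x|,|y|$ that $T(r_1,r_2)>1/|t|$ still holds by continuity, and set $t' := 1/T(r_1,r_2) < |t|$. Then $(r_1,r_2,t')$ lies on the sheet $1-tT(x,y)=0$ and has all three moduli strictly smaller than $(|x|,|y|,|t|)$, certifying non-minimality.

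The main obstacle is this last construction, which crucially exploits both the continuity of $T$ and its strict monotonicity in positive reals; without strict monotonicity one could not guarantee that the new point sits \emph{below} the original one in all three coordinates simultaneously. Degenerate cases where $xy=0$ are mild given the lemma's implicit assumption that $1/(|xy|S(|\ox|,|\oy|))$ be well-defined: reading the third inequality as $|t|\,T(|x|,|y|) \leq 1$ resolves any ambiguity, and a zero coordinate in $(x,y,t)$ makes the contrapositive trivially vacuous in the direction where a strictly smaller modulus is impossible.
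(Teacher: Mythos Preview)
Your proof is correct, but it takes a genuinely different route from the paper. The paper does not argue via ``no singularity strictly below in all coordinates''; it works directly from its stated definition of minimal as $\overline{\mD}\cap\mV$ and simply computes $\mD$. Since $G$ and $H$ are coprime the minimal points of $F$ are those of $1/H$, and $1/H$ is the product of three geometric series $\frac{1}{1-x}\cdot\frac{1}{1-y}\cdot\frac{1}{1-tT(x,y)}$ with $T(x,y)=xyS(\ox,\oy)$ having nonnegative coefficients. The domain of convergence of the product is therefore the intersection of the three individual domains $|x|<1$, $|y|<1$, $|t|\,T(|x|,|y|)<1$, and the lemma follows immediately by taking closures and intersecting with $\mV$.

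Your approach trades that two-line structural argument for explicit witness constructions in each direction. This is more elementary---it makes no appeal to the power-series domain $\mD$---and would transplant more readily to denominators whose expansions are not manifestly nonnegative; the cost is length and the need to handle edge cases. One such edge case deserves a firmer treatment: the characterization of minimality you invoke and the paper's $\overline{\mD}\cap\mV$ definition agree when all coordinates are nonzero, but can disagree on the coordinate hyperplanes. For instance, $(1,0,t)$ with $|t|>a/b$ lies in $\mV$ and is vacuously minimal under your definition (there is no $|y'|<0$), yet it lies outside $\overline{\mD}$ and fails the third inequality of the lemma. Your closing remark that the lemma is implicitly restricted to $xyt\neq 0$ is the right resolution, but ``trivially vacuous'' overstates matters: you are excluding these points, not disposing of them.
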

\begin{proof}
When the polynomials $G$ and $H$ are co-prime the set of minimal points coincides with the minimal points of the rational function $1/H(x,y,t)$, which is the product of three geometric series.  The domain of convergence $\mD$ is then obtained by intersecting the domains of convergence of the rational functions $1/(1-x)$, $1/(1-y)$, and $1/(1-txyS(\ox,\oy))$. The domain of convergence of the last rational function is $\{(x,y,t): |t| < |xy|^{-1}\,S(|\ox|,|\oy|)^{-1}\}$ as the polynomial $xyS(\ox,\oy)$ has non-negative coefficients so that the point $(x,y,t)$ is in its domain of convergence only if $(|x|,|y|,|t|)$ is.
\end{proof}

Similar arguments show that when the weights are such that the factor $(1-x)$, and not $(1-y)$, is canceled by the numerator then the minimal points are described by $|y| \leq 1$ and $|t| \leq \frac{1}{|xy|S(|\ox|,|\oy|)}$; the cases when only $(1-y)$ or both $(1-x)$ and $(1-y)$ are canceled are analogous.

\subsection{Critical points}
\label{ss:critpts}
Intuitively, minimal points are the singularities of $F$ which the cycle of integration $T$ in the Cauchy integral of Equation~\eqref{eq:CIF} can be made arbitrarily close to. The goal is to find a finite set of points such that we can replace the Cauchy integral over $T$ with a sum of integrals over domains local\footnote{Technically, the chains of integration used can be \emph{quasi-local} chains, meaning they are homologous to chains which can be taken to be arbitrarily close to the points we consider, except at points which will not matter in the calculation of dominant asymptotics. A rigorous definition of these quasi-local cycles in the framework of Morse theory is given in Lemma 8.2.4 of Pemantle and Wilson~\cite{PemantleWilson2013}.} to these points. With this in mind, Pemantle and Wilson define \emph{critical points}, which roughly correspond to points where saddle point methods can be applied to local integrals in order to obtain asymptotic estimates.

We will not describe critical points in full generality, instead relying on a proposition which determines critical points for a large class of rational functions (and for our purposes define critical points). First, one must partition the singular variety $\mV$ into a collection of smooth manifolds called \emph{strata}, with certain conditions on how the strata sit inside $\mV$ relative to each other.  Define 
\[ H_1 = 1-txyS(\ox,\oy), \qquad H_2 = 1-x, \qquad H_3 = 1-y,\] 
and for polynomials $f_1,\dots,f_r$ let $\mathbb{V}(f_1,\dots,f_r)$ denote the points $(x,y,t)$ such that $f_1(x,y,t)=\cdots=f_r(x,y,t)=0$. For the rational function we consider, defined in Equation~\eqref{eqn:weighted_extraction}, the set $\mV$ can be stratified into 7 components:
\[ \mV_1 := \mathbb{V}(H_1) \setminus \mathbb{V}(H_2H_3), \quad \mV_2 := \mathbb{V}(H_2) \setminus \mathbb{V}(H_1H_3), \quad \mV_3 := \mathbb{V}(H_3) \setminus \mathbb{V}(H_1H_2), \]
\[ \mV_{12} := \mathbb{V}(H_1,H_2) \setminus \mathbb{V}(H_3), \quad \mV_{13} := \mathbb{V}(H_1,H_3) \setminus \mathbb{V}(H_2), \quad \mV_{23} := \mathbb{V}(H_2,H_3) \setminus \mathbb{V}(H_1),\]
\[ \mV_{123} := \mathbb{V}(H_1,H_2,H_3). \]
Note that this collection of sets partitions $\mV$, and that each defines a smooth manifold (there are additional requirements for this to be a stratification, but they are trivially satisfied).  The following result is sufficient to characterize the critical points for us.

\begin{proposition}[{Pemantle and Wilson~\cite[Section 8.3]{PemantleWilson2013}}]
\label{prop:findcritpts}
Suppose that $X$ is a stratum of $\mV$ defined as $X = \mathbb{V}(f_1,\dots,f_r) \setminus \mathcal{W}$, where $\mathcal{W}$ is an algebraic set of lower dimension than $\mathbb{V}(f_1,\dots,f_r)$, each $\mathbb{V}(f_i)$ is a complex manifold and the tangent planes of the $\mathbb{V}(f_i)$ are linearly independent where they intersect.  Then $(x,y,t) \in X$ is a critical point if and only if the vector $(1,1,1)$ can be written as a linear combination of the vectors 
\[ \left(x\frac{\partial f_i}{\partial x}, y\frac{\partial f_i}{\partial y}, t\frac{\partial f_i}{\partial t}\right), \qquad i=1,\dots,r.\]  
\end{proposition}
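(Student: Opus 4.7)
The plan is to derive the critical-point condition from the saddle-point interpretation of the multivariate Cauchy integral in Equation~\eqref{eq:CIF}. Writing $(xyt)^{-n} = \exp(-n(\log x + \log y + \log t))$, the exponential rate of the integrand as $n\to\infty$ is controlled by the real-valued height function $h(x,y,t) = -\log|xyt|$. In the Morse-theoretic framework of Pemantle and Wilson, one deforms the torus of integration $T$ to a chain descending into $\mV$ that concentrates near a finite collection of points; these are exactly the critical points of $h$ restricted to each stratum, so the task reduces to computing such critical points on $X$.

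First I would introduce logarithmic coordinates $u_1=\log x$, $u_2=\log y$, $u_3=\log t$ in a neighbourhood of the point under consideration. In these coordinates $h = -\mathrm{Re}(u_1+u_2+u_3)$ is (up to sign) the real part of the \emph{holomorphic} function $\phi(u) = u_1+u_2+u_3$. The Cauchy--Riemann equations then imply that on any complex submanifold $X\subset\mathbb{C}^3$, the critical points of the real function $h|_X$ coincide with the points where the holomorphic differential $d\phi$ vanishes on the complex tangent space $T_{(x,y,t)}X$; indeed, $d\phi|_{T_pX} = 0$ forces both $\mathrm{Re}(d\phi)$ and $\mathrm{Im}(d\phi)$ to vanish on $T_pX$ viewed as a real space closed under the complex structure.

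Next I would compute the tangent space. By hypothesis, each $\mathbb{V}(f_i)$ is a complex manifold and their tangent planes meet independently along $X = \mathbb{V}(f_1,\dots,f_r)\setminus\mathcal W$, so at $(x,y,t)\in X$ the holomorphic tangent space is the kernel (in $du$-coordinates) of the Jacobian matrix whose rows are
\[ \Bigl(\tfrac{\partial f_i}{\partial u_1},\,\tfrac{\partial f_i}{\partial u_2},\,\tfrac{\partial f_i}{\partial u_3}\Bigr) = \Bigl(x\tfrac{\partial f_i}{\partial x},\, y\tfrac{\partial f_i}{\partial y},\, t\tfrac{\partial f_i}{\partial t}\Bigr), \qquad i=1,\dots,r. \]
The covector $d\phi = du_1+du_2+du_3$ vanishes on this kernel precisely when the coefficient vector $(1,1,1)$ lies in the row span of this matrix, yielding the stated condition. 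The transversality hypothesis ensures the rows are linearly independent, so both directions of the ``if and only if'' are rigorous linear algebra.

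I expect the genuine conceptual obstacle is not the linear-algebra computation above but justifying the saddle-point characterization of critical points inside the stratified Morse-theoretic framework. One must verify that $h|_X$ is a (stratified) Morse function, that the torus $T$ can be replaced by quasi-local chains supported near the union of critical points without changing the value of the integral up to exponentially smaller terms, and that the intersection-homology apparatus used to handle the multiple strata (the singular variety is not a manifold along $\mV_{12}, \mV_{13}, \mV_{23}, \mV_{123}$) applies. These technicalities occupy the bulk of Chapter~8 of Pemantle and Wilson~\cite{PemantleWilson2013}; our proof strategy is to cite their general machinery and present only the concrete Lagrange-multiplier calculation on each stratum.
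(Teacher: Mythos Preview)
The paper does not prove this proposition; it is stated with attribution to Pemantle and Wilson~\cite[Section 8.3]{PemantleWilson2013} and used as a black box. The only commentary in the paper is Remark~\ref{rem:Lagrange}, which informally connects the statement to Lagrange multipliers via the optimization of $|xyt|$ on $\overline{\mD}\cap\mV$.

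Your proposal is a correct sketch of why the result holds, and in fact goes further than the paper does. The logarithmic change of variables, the identification of critical points of $h|_X$ with zeros of $d\phi$ on the holomorphic tangent space, and the row-span/kernel duality are exactly the content of the relevant computation in Pemantle and Wilson. Your final paragraph is also accurate: the substantive work is the stratified Morse theory justifying that these are the points that matter for asymptotics, and that is precisely what the paper (and you) defer to the reference. So there is no discrepancy to flag---you have supplied an argument where the paper simply cites one.
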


\begin{rem}
\label{rem:Lagrange}
This proposition is reminiscent of the classical method of Lagrange multipliers, and we now give a rough idea of this connection. Because $F(x,y,t)$ is absolutely convergent on $(x,y,t) \in \overline{\mD}$,
 the same property holds for $Q_{\mathfrak a}(1,1;xyt)= \sum_{n \geq 0} f_{n,n}(n)\, x^n y^n t^n$. Hence, the radius of convergence $\rho$ of $Q_{\mathfrak a}(1,1;t)$ satisfies
\[ \rho \geq \sup_{(x,y,t) \in \overline{\mD}} \vert x y t\vert .\]
It is not obvious, but true in our context, that this inequality is actually an equality, and the supremum is reached on a point of the variety \mV: 
\begin{equation} \rho = \sup_{(x,y,t) \in \overline{\mD} \cap \mV} \vert x y t\vert .  \label{eq:rho=sup} \end{equation} 
Thus, the finite set of singularities we seek to find corresponds to the maximizers of $\vert x y t\vert $ on $\overline{\mD} \cap \mV$, while the critical points are the candidates for the maximization. Proposition~\ref{prop:findcritpts} encapsulates the fact that the critical points give the possible locations of local extrema of $\vert x y t\vert $ on each stratum. 
\end{rem}

In our application the only polynomial containing the $t$ variable is $H_1$, meaning that the only strata that can have critical points are $\mV_1$, $\mV_{12}$, $\mV_{13}$ and $\mV_{123}$. Using Proposition~\ref{prop:findcritpts} it is then easy to determine the critical points of $\mV$.

\begin{ex} 
On the stratum $\mV_1$, Proposition~\ref{prop:findcritpts} implies that the critical points are those where the matrix 
\[ \begin{pmatrix} x\frac{\partial H_1}{\partial x} & y\frac{\partial H_1}{\partial y} & t\frac{\partial H_1}{\partial t}  \\
1 & 1 & 1 \end{pmatrix} \] 
is rank deficient.  Calculating the $2\times2$ minors of this matrix implies that $(x,y,t)$ is a critical point if and only if 
\[yt(\partial S/\partial x)(\ox,\oy)-xytS(\ox,\oy) = -xytS(\ox,\oy) \text{ and } xt(\partial S/\partial y)(\ox,\oy)-xytS(\ox,\oy) = -xytS(\ox,\oy). \] 
Substituting the definition of the inventory $S(x,y) = ax + (ax)^{-1} + ax(by)^{-1}+ by(ax)^{-1}$ into this system of equations implies there are two critical points defined uniquely by their $(x,y)$-coordinates: $c_1^{\pm} = (\pm a,b)$.  
\end{ex} 

We repeat this computation for each case to determine the following result.  
\begin{proposition}
\label{prop:critpts}
The set of critical points on each strata is given in Table~\ref{tab:critpt}.
\end{proposition}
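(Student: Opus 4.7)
The plan is to apply Proposition~\ref{prop:findcritpts} separately on each of the seven strata. Because the polynomials $H_2=1-x$ and $H_3=1-y$ have no $t$-dependence, any gradient vector $(x\partial_x H_i,\,y\partial_y H_i,\,t\partial_t H_i)$ with $i\in\{2,3\}$ has third coordinate equal to zero. Hence on the strata $\mV_2$, $\mV_3$, and $\mV_{23}$ no linear combination of the associated gradients can equal $(1,1,1)$, so these three strata contain no critical points and can be discarded immediately.

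For the remaining four strata I would exploit the identity $H_1=0 \iff tK(x,y)=1$, where $K(x,y):=xyS(\ox,\oy)$, so that $t\,\partial_t H_1=-1$ at every point we consider. The case of $\mV_1$ is the example already worked out in the text, yielding the pair $c_1^\pm=(\pm a,b)$ with common $t$-coordinate $1/K(\pm a,b)$. On $\mV_{12}$ the constraint $x=1$ makes the gradient of $H_2$ equal to $(-1,0,0)$; writing $(1,1,1)=\alpha(-1,0,0)+\beta(x\partial_x H_1,\,y\partial_y H_1,\,t\partial_t H_1)$ and matching the second and third coordinates reduces the rank condition to the single scalar equation $y\partial_y H_1 = t\partial_t H_1$, equivalently $y\,\partial_y K(1,y) = K(1,y)$. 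Substituting the explicit form of $K$ turns this into an elementary polynomial equation in $y$ whose real solutions I would then list, together with the corresponding $t$-coordinate obtained from $H_1=0$. The stratum $\mV_{13}$ is handled by the exactly symmetric argument with $y=1$ and the condition $x\,\partial_x K(x,1)=K(x,1)$.

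Finally, on $\mV_{123}$ the coordinates are fixed to $x=y=1$, and the three gradient vectors of $H_1,H_2,H_3$ at such a point are linearly independent since only $\nabla H_1$ has a nonzero third coordinate. Hence the rank condition of Proposition~\ref{prop:findcritpts} is automatically satisfied and every point of this stratum is critical; the unique such point is pinned down by $H_1(1,1,t)=0$, giving $t=1/K(1,1)$, which I would simplify to the closed form recorded in the table using the explicit expression for $K$.

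The main obstacle is not conceptual but bookkeeping. One has to expand $\nabla H_1$ carefully in terms of the weighted inventory $S(x,y)=ax+(ax)^{-1}+ax(by)^{-1}+by(ax)^{-1}$, verify that the polynomial systems on $\mV_{12}$ and $\mV_{13}$ have exactly the claimed critical points with no extra spurious branches, and track the simplifications in the $t$-coordinates so that they match the entries of Table~\ref{tab:critpt}.
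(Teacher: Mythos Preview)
Your proposal is correct and follows exactly the approach the paper takes: apply Proposition~\ref{prop:findcritpts} stratum by stratum, first discarding $\mV_2,\mV_3,\mV_{23}$ because $H_2,H_3$ lack a $t$-component (the paper states this just before the worked example), then solving the resulting rank conditions on the four remaining strata. Your reductions $y\,\partial_y K(1,y)=K(1,y)$ on $\mV_{12}$ and $x\,\partial_x K(x,1)=K(x,1)$ on $\mV_{13}$ are the right scalar equations, and your treatment of $\mV_{123}$ is exactly what the geometry forces; the paper's own proof is simply the sentence ``we repeat this computation for each case'', so you have in fact written out more detail than the paper does.

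One minor point worth flagging as you carry out the bookkeeping: the equation on $\mV_{12}$ is quadratic in $y$ and admits the second real root $y=-b/a$ in addition to $y=b/a$. Table~\ref{tab:critpt} records only the positive root, presumably because the negative one is never minimal (its $t$-coordinate has modulus exceeding the bound in Lemma~\ref{lem:minpts}) and so plays no role in the subsequent analysis. Your remark about checking for ``extra spurious branches'' is therefore well placed.
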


\begin{table} \centering
\renewcommand{\arraystretch}{1.5}
\begin{tabular}{ccc}
Stratum & Critical Points & Exponential Growth \\\hline
$\mV_1$ & $c_1^{\pm} = (\pm a,b)$ & $e_1 = 4$ \\
$\mV_{12}$ & $c_{12} = \left(1,\frac{b}{a}\right)$ & $e_{12} = \frac{(a+1)^2}{a}$ \\
$\mV_{13}$ & $c_{13}^{\pm} = \left(\pm\frac{a}{\sqrt{b}},1\right)$ & $e_{13} = \frac{2(b+1)}{\sqrt{b}}$ \\
$\mV_{123}$ & $c_{123} = (1,1)$ & $e_{123} = \frac{(b+1)(a^2+b)}{ab}$
\end{tabular}

\caption{The $(x,y)$-coordinates of the critical points; for each, $t=\ox\oy S(\ox,\oy)^{-1}$.}
\label{tab:critpt}
\end{table}

\subsection{Contributing points}
\label{sec:contrib}
The methods of ACSV are well suited to our weighted models because they always a finite set of minimal critical points (which also satisfy some additional technical conditions).  The integral over the cycle $T$ in Equation~\eqref{eq:CIF} can be replaced by the sum of integrals over cycles which are arbitrarily close to each contributing point while introducing an error that grows exponentially smaller than the diagonal sequence. To calculate an asymptotic estimate of the diagonal sequence, we compute the asymptotic contribution for each contributing point and sum the results; the function $\vert x y t\vert^{-1}$ determines the exponential growth of the diagonal sequence.

The next result shows that the function $\vert x y t\vert^{-1}$ is always minimized at a minimal critical point.

\begin{lem}
\label{lem:expmin}
A minimizer $(x_0,y_0,t_0)$ of $\vert x y t\vert^{-1}$ in $\overline{\mD}$ is a minimal critical point.
\end{lem}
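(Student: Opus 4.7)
The plan is to follow the Lagrange-multiplier heuristic sketched in Remark~\ref{rem:Lagrange}: a minimizer of $|xyt|^{-1}$ on $\overline{\mD}$ is equivalently a maximizer of $|xyt|$, and I will show such a maximizer must lie on $\mV$ and satisfy the rank-deficiency condition of Proposition~\ref{prop:findcritpts} on the stratum that contains it.

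First I would exploit the polycircular structure of $\overline{\mD}$. Since $1/H$ is a product of three power-series factors all having non-negative coefficients (the expansion of $1/(1-txyS(\ox,\oy))$ is in non-negative powers of $x$, $y$ and $t$ because $xy\,S(\ox,\oy)$ is a polynomial in $x,y$ with positive coefficients), the proof of Lemma~\ref{lem:minpts} yields
\[
\overline{\mD}=\{(x,y,t):\;|x|\leq 1,\;|y|\leq 1,\;|t|\,|xy|\,S(|\ox|,|\oy|)\leq 1\}.
\]
Thus replacing $(x_0,y_0,t_0)$ by $(|x_0|,|y_0|,|t_0|)$ preserves both the constraints and the objective $|xyt|$, so I may assume $x_0,y_0,t_0\geq 0$. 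The bound $|xyt|\leq 1/S(1/x,1/y)$ blows down to $0$ as $x$ or $y$ tends to $0$, so at a maximizer $x_0,y_0>0$; and if the third inequality were strict, $t_0$ could be increased, contradicting maximality. Hence $(x_0,y_0,t_0)\in\mV$, with $H_1$ vanishing, so the point lies in exactly one of the four strata $\mV_1,\mV_{12},\mV_{13},\mV_{123}$ according to which of the side constraints $x_0=1$, $y_0=1$ are also binding.

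Second, on each such stratum I would apply Lagrange multipliers to the problem of maximizing $\log x+\log y+\log t$ subject to the binding equations $H_i=0$ (plus possibly $H_2=0$, $H_3=0$). Thanks to the logarithmic reparametrization, the gradient of the objective is $(1,1,1)$ and the gradient of $H_i$ becomes the row $(x\,\partial_x H_i,\,y\,\partial_y H_i,\,t\,\partial_t H_i)$ appearing in Proposition~\ref{prop:findcritpts}; the Lagrange condition that $(1,1,1)$ lies in their span is exactly the rank-deficiency condition defining a critical point. The stratum $\mV_{123}$ consists of the single point $(1,1,1/S(1,1))$ and is trivially critical.

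The main obstacle I anticipate is not an algebraic one but rather ensuring that the Lagrange calculus is legitimately available: one must check that on the stratum containing the maximizer the binding gradients are linearly independent (ruled out in the partition by the smooth-manifold hypothesis of Proposition~\ref{prop:findcritpts}), and that the boundary between strata has been handled correctly so that a boundary maximizer of one stratum automatically becomes an interior maximizer of the adjacent higher-codimension stratum. Once this bookkeeping is settled, minimality of $(x_0,y_0,t_0)$ follows immediately from the polycircular description of $\overline{\mD}$ established in the first step.
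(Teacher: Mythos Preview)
Your proposal is correct and follows essentially the same route as the paper's proof. Both arguments reduce to positive real coordinates using the non-negativity of the coefficients of $xyS(\ox,\oy)$, observe that the constraint $H_1=0$ must bind at a maximizer of $|xyt|$, and then invoke Lagrange multipliers; the paper carries out the last step after first eliminating $t$ to obtain a two-variable box-constrained minimization of $S(\ox,\oy)$ on $(0,1]^2$, whereas you keep three variables and argue stratum by stratum, but the content is the same. The ``obstacle'' you flag---checking that a maximizer on the boundary of one stratum is interior to the next---is exactly what the paper subsumes in the phrase ``the theory of Lagrange multipliers implies this is true whenever the minimum of $S(\ox,\oy)$ is achieved in $(0,1]^2$,'' together with the verification that $S(\ox,\oy)\to\infty$ as $x$ or $y\to 0^+$ (which you should spell out for the GB inventory, since it relies on the specific terms $a/x$ and $bx/(ay)$). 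The paper also separately treats the degenerate weights for which $1-x$ or $1-y$ cancels with the numerator, enlarging the relevant domain to $(0,1]\times(0,\infty)$ etc.; you may want to add a sentence acknowledging this case.
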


\begin{proof}
Assume first that the weights $a$ and $b$ are such that the factors $1-x$ and $1-y$ in $H(x,y,t)$ do not cancel with factors in the numerator $G(x,y,t)$.

By the definition of absolute convergence, the point $(x,y,t) \in \overline \mD$ if and only if $(|x|,|y|,|t|) \in \overline \mD$.  Furthermore, $\vert x y t\vert^{-1}$ decreases as $\vert t \vert$ grows, hence by Lemma~\ref{lem:minpts} the function $\vert x y t\vert^{-1}$ is minimized on $\overline \mD \cap \mathbb R_+^3$ at points of the form $(x,y,\ox\oy S(\ox,\oy)^{-1})$ with $0<x,y \leq 1$.  Thus, it is sufficient to show that the minimizer of 
\[ S(\ox,\oy) = \frac{a}{x}+\frac{ay}{bx}+\frac{bx}{ay}+\frac{x}{a}\] 
for $(x,y) \in (0,1]^2$ occurs at the $(x,y)$-coordinates of a minimal critical point. As touched on in Remark~\ref{rem:Lagrange}, the theory of Lagrange multipliers implies this is true whenever the minimum of $S(\ox,\oy)$ is achieved in $(0,1]^2$. The minimum is achieved because $S(\ox,\oy)$ tends to infinity as either $x$ or $y$ (or both) stay positive and tend to 0: the form of $S(\ox,\oy)$ implies this holds as $x\rightarrow0$ but then it also holds as $y\rightarrow0$ because $x$ bounded away from 0 implies $x/y \rightarrow \infty$.

Similar arguments show that when one or both of the factors $1-x$ and $1-y$ in the denominator are canceled then the result holds as long as the minimum of $S(\ox,\oy)$ is achieved on $(0,1] \times (0,\infty)$, $(0,\infty)\times(0,1]$ or $(0,\infty)^2$, depending on which factors have canceled.  Again the minimum is achieved, as $S(\ox,\oy)$ approaches infinity as either $x$ or $y$ (or both) approach infinity.
\end{proof}

It is now easy to characterize the contributing points using our explicit characterization of minimal points in Lemma~\ref{lem:minpts}.

\begin{proposition}
\label{prop:contrib}
For given weights $a,b>0$, the set of contributing points consists of the unique points $(x,y,t)$ whose $(x,y)$-coordinates are 
\begin{itemize}
\item $c_1^{\pm}$ when $a \leq1$ and $b \leq 1$;
\item $c_{12}$ when $a>1$ and $a\geq b$;
\item $c_{13}^{\pm}$ when $b>1$ and $b\geq a^2$;
\item $c_{123}$ when $b>a>\sqrt{b}>1$.
\end{itemize}
\end{proposition}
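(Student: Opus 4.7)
The plan is to combine the three ingredients already in place: Proposition~\ref{prop:critpts}, which lists all critical points stratum by stratum; Lemma~\ref{lem:minpts}, which characterizes minimality by coordinate moduli; and Lemma~\ref{lem:expmin}, which reduces the hunt for contributing points to finding the minimal critical point(s) that minimize $|xyt|^{-1}$. Since each critical point in Table~\ref{tab:critpt} lies on a stratum contained in $\mV(H_1)$, its $t$-coordinate saturates the bound $|t|\leq (|xy| S(|\ox|,|\oy|))^{-1}$, so only the conditions $|x|\leq 1$ and $|y|\leq 1$ need to be checked, and they are automatic on the factors where $H_2$ or $H_3$ vanishes. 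One should also note that $|xyt|^{-1}$ at each critical point equals the value $e$ recorded in the third column of Table~\ref{tab:critpt}; this is a direct substitution, as illustrated by $1/|c_{123}|=S(1,1)=(b+1)(a^2+b)/(ab)=e_{123}$.

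The first substep is to record, for each critical point, the exact condition on $(a,b)$ under which it is minimal. Applying Lemma~\ref{lem:minpts} coordinate-by-coordinate gives: $c_1^{\pm}$ is minimal iff $a\leq 1$ and $b\leq 1$; $c_{12}$ is minimal iff $b\leq a$; $c_{13}^{\pm}$ is minimal iff $a^2\leq b$; and $c_{123}$ is always minimal. Intersecting these with the four regions of the proposition, one immediately eliminates all but $\{c_1^{\pm},c_{12},c_{13}^{\pm},c_{123}\}$ in the obvious matches, and only $c_{123}$ survives in the region $b>a>\sqrt{b}>1$.

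The second substep is to compare the values $e_1,e_{12},e_{13},e_{123}$ among the minimal critical points in each region; by Lemma~\ref{lem:expmin} the contributing points are exactly the minimizers. The relevant inequalities all reduce to squares: $e_1\leq e_{12}$ is $(a-1)^2\geq 0$, $e_1\leq e_{13}$ is $(\sqrt{b}-1)^2\geq 0$, $e_{12}\leq e_{123}$ is $(a-b)^2\geq 0$ after clearing denominators, and $e_{13}\leq e_{123}$ is $(a-\sqrt{b})^2\geq 0$; finally, $e_1\leq e_{123}$ follows from AM--GM applied termwise. Combining these with the minimality conditions from the first substep pins down the contributing set in each of the four cases of the proposition.

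The main obstacle will be the boundary behaviour. On the curves $a=1$, $b=1$, $b=a$, and $b=a^2$ several of the critical points coincide or produce the same exponential growth, and one has to verify that the resulting set of \emph{distinct} $(x,y,t)$ triples agrees with the statement. This is genuinely a bookkeeping matter: for instance at $a=b=1$ all points collapse to $(\pm 1,1,\cdot)$, and on $b=a^2$ the point $c_{13}^+$ coincides with $c_{123}$ when $a=1$; in each situation the union of minimizers collapses to the list in the proposition, so uniqueness in the sense stated follows. The remaining verifications are routine, and no further analytic input beyond Lemmas~\ref{lem:minpts} and \ref{lem:expmin} is required.
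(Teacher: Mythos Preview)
Your approach matches the paper's: check minimality of each critical point via Lemma~\ref{lem:minpts}, compare the growths $e_1\leq e_{12},e_{13}\leq e_{123}$ (the paper cites AM--GM where you spell out the underlying squares), and note that on the boundary curves the competing critical points coincide. The one ingredient you slightly overstate is Lemma~\ref{lem:expmin}, which only says a minimizer of $|xyt|^{-1}$ on $\overline{\mD}$ is a minimal critical point; the paper additionally invokes Proposition~10.3.6 of Pemantle and Wilson~\cite{PemantleWilson2013} to formally identify these minimizers with the contributing points in the ACSV sense.
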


Note that on the boundaries of these case distinctions the contributing points with positive coordinates coincide.

\begin{proof}
The result follows from Proposition 10.3.6 of Pemantle and Wilson~\cite{PemantleWilson2013}, which characterizes the set of critical points when $\mV$ has the type of geometry that we encounter here.  Aside from some easily checked technical conditions, we need to show that the points described above in each case are minimal points which (by Lemma~\ref{lem:expmin}) minimize $\vert x y t\vert^{-1}$ among the finite set of minimal critical points on $\overline{\mD}$. 

The values of the exponential growth $\vert x y t\vert^{-1}$ for each set of critical points are listed in the final column of Table~\ref{tab:critpt}.  Note that the AM-GM inequality implies that $e_1 \leq e_{12},e_{13} \leq e_{123}$, so that the set of critical points consists of $c_1^{\pm}$ as long as these points are minimal. Similarly, the points $c_{12}$ or $c_{13}^{\pm}$ contribute as long as they are minimal and $c_1^{\pm}$ is not (note that if $c_1^{\pm}$ is not minimal then it can't happen that both $c_{12}$ and the $c_{13}^{\pm}$ are minimal).  Finally, the conditions listed above come from the characterization of minimal points in Lemma~\ref{lem:minpts}.  

Note that the factors $(1-x)$ and $(1-y)$ in the denominator cancel only when $a=1$ or $b=1$, respectively.  Such models are either Transitional or Directed, and the conclusion can be verified separately for each case.
\end{proof}

At this point we have determined, for all possible choices of weights $a$ and $b$, the points where a local analysis of the rational function $F(x,y,t)$ determines asymptotics of the diagonal sequence up to an exponentially smaller error.  It remains only to calculate the asymptotic contribution of each contributing point and sum the results in each case.  The different strata containing critical points correspond to different exponential growths, but do not completely determine universality classes: the critical exponents depend on several factors, including the degree of vanishing of $G(x,y,t)$ at the contributing points.

We now complete the proof of Theorem~\ref{thm:main_asymptotic_result} by showing how to compute the asymptotic contributions of the critical points.

\subsection{Asymptotic contributions}
\label{ss:asympcontr}

Making the change of variables $(x,y,t) \mapsto \left(\epsilon_1 e^{i\theta_1}, \epsilon_2 e^{i\theta_2}, \epsilon_3 e^{i\theta_3}\right)$ converts the Cauchy Integral representation~\eqref{eq:CIF} into
\begin{equation} f_{n,n}(n) = \frac{(\epsilon_1\epsilon_2\epsilon_3)^{-n}}{(2\pi)^3} \int_{(-\pi,\pi)^3} F\left(\epsilon_1 e^{i\theta_1}, \epsilon_2 e^{i\theta_2}, \epsilon_3 e^{i\theta_3} \right) e^{-n(\theta_1+\theta_2+\theta_3)} \tn{d}\theta_1 \tn{d}\theta_2 \tn{d}\theta_3. \label{eq:CIFexp} \end{equation}
This new representation is an example of a \emph{Fourier-Laplace integral}: that is, it has the form 
\[ \int A(\theta_1,\theta_2,\theta_3) e^{-n \phi(\theta_1,\theta_2,\theta_3)} \tn{d}\theta_1 \tn{d}\theta_2 \tn{d}\theta_3, \]
with the functions $A$ and $\phi$ analytic over their domain of integration.  Asymptotics of a large class of Fourier-Laplace integrals can be obtained through the following result.

\begin{proposition}[{H{\"o}rmander~\cite[Theorem 7.7.5]{Hormander1990a} \& Pemantle and Wilson~\cite[Lemma 13.3.2]{PemantleWilson2013}}] 
\label{prop:HighAsm}
Suppose that the functions $A(\bt)$ and $\phi(\bt)$ in $d$ variables are smooth in a neighbourhood $\mN$ of the origin and that
\begin{itemize}
\item $\phi$ has a critical point at $\bt=\bzer$, i.e., that $(\nabla \phi)(\bzer)=\bzer$, and that the origin is the only critical point of $\phi$ in $\mN$;
\item the Hessian $\mH$ of $\phi$ at $\bt=\bzer$ is non-singular;
\item $\phi(\bzer)=0$;
\item the real part of $\phi(\bt)$ is non-negative on $\mN$.
\end{itemize} 
Then for any integer $M>0$ there exist effective constants $C_0,\dots,C_M$ such that
\begin{equation}
\label{eq:Goal}
 \int_{\mN} A(\bt)\,e^{-n \phi(\bt)}\tn{d}\bt = \left(\frac{2\pi}{n}\right)^{d/2} \det(\mH)^{-1/2} \cdot \sum_{k=0}^M C_k n^{-k} + O\left(n^{-M-1}\right). 
\end{equation}
The constant $C_0$ is equal to $A(\bzer)$. Moreover, if $A(\bt)$ vanishes to order $L$ at the origin then (at least) the constants $C_0,\dots, C_{\lfloor\frac{L}{2}\rfloor}$ are all zero.
\end{proposition}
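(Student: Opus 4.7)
The result is a complex stationary-phase / saddle-point expansion, and I would prove it by the standard three-step strategy: localize, normalize, then integrate term-by-term against a Gaussian. Throughout, write $d$ for the ambient dimension and $\mathcal{H}$ for the Hessian of $\phi$ at $\bzer$.

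\emph{Step 1 (Localization).} Since $\mathcal{H}$ is nondegenerate and $\phi(\bzer)=0$, a second-order Taylor expansion gives $\mathrm{Re}\,\phi(\bt)\geq c|\bt|^2$ on some ball $B_{2\epsilon}\subset \mathcal{N}$. Because $\bzer$ is the only critical point in $\mathcal{N}$ and $\mathrm{Re}\,\phi\geq 0$ there, compactness forces $\mathrm{Re}\,\phi\geq c'>0$ on $\mathcal{N}\setminus B_\epsilon$. Choose a smooth cutoff $\chi$ supported in $B_{2\epsilon}$ and equal to $1$ on $B_\epsilon$. The integral $\int_{\mathcal{N}} A(1-\chi) e^{-n\phi}\,d\bt$ is $O(e^{-c'n})$, hence absorbable into the $O(n^{-M-1})$ error for every $M$. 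It therefore suffices to analyze $I_n:=\int A(\bt)\chi(\bt)e^{-n\phi(\bt)}\,d\bt$ over a ball around $\bzer$.

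\emph{Step 2 (Morse normal form).} Apply the complex Morse lemma (valid when $\phi$ is smooth with nondegenerate critical point and $\mathrm{Re}\,\phi\geq 0$): there is a local diffeomorphism $\bt=\psi(\bu)$ fixing the origin with $D\psi(\bzer)=\mathrm{Id}$ such that $\phi(\psi(\bu))=\tfrac{1}{2}\langle \bu,\mathcal{H}\bu\rangle$. Let $\widetilde{A}(\bu)=A(\psi(\bu))\chi(\psi(\bu))\det D\psi(\bu)$, so
\[
I_n=\int_{\mathbb{R}^d} \widetilde{A}(\bu)\, e^{-\tfrac{n}{2}\langle \bu,\mathcal{H}\bu\rangle}\,d\bu,
\]
with $\widetilde{A}$ smooth and compactly supported, and $\widetilde{A}(\bzer)=A(\bzer)$.

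\emph{Step 3 (Gaussian moments).} Rescale $\bu=\bv/\sqrt{n}$ and Taylor-expand $\widetilde{A}(\bv/\sqrt{n})=\sum_{|\alpha|<N}\tfrac{1}{\alpha!}(\partial^\alpha \widetilde{A})(\bzer)\, n^{-|\alpha|/2}\bv^\alpha + R_N(\bv,n)$. Each monomial gives a Gaussian moment
\[
\int_{\mathbb{R}^d}\bv^\alpha e^{-\tfrac{1}{2}\langle \bv,\mathcal{H}\bv\rangle}\,d\bv=(2\pi)^{d/2}\det(\mathcal{H})^{-1/2}\cdot m_\alpha(\mathcal{H}^{-1}),
\]
where $m_\alpha$ is an explicit moment of the Gaussian with covariance $\mathcal{H}^{-1}$. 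Collecting powers of $n$ (the Jacobian of rescaling contributes $n^{-d/2}$, each Taylor coefficient contributes $n^{-|\alpha|/2}$, and $m_\alpha=0$ for $|\alpha|$ odd), the expansion~\eqref{eq:Goal} drops out with $C_0=\widetilde{A}(\bzer)=A(\bzer)$, and if $\widetilde{A}$ vanishes to order $L$ then only $|\alpha|\geq L$ monomials survive, forcing $C_0,\dots,C_{\lfloor L/2\rfloor -1}$—and also $C_{\lfloor L/2 \rfloor}$ when $L$ is odd by parity—to vanish. Uniform control of the remainder $R_N$ is obtained from the smoothness of $\widetilde{A}$ and an $L^\infty$ bound on $|\alpha|$-th derivatives on the compact support.

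\emph{Main obstacle.} The delicate point is justifying the complex Morse lemma for smooth $\phi$ with only the nonnegativity condition $\mathrm{Re}\,\phi\geq 0$ (rather than $\phi$ being holomorphic or $\phi=\phi_r + i\phi_i$ with $\phi_r$ Morse). An alternative, Hörmander's original approach in~\cite[Thm.~7.7.5]{Hormander1990a}, bypasses the Morse lemma entirely: one writes $\phi(\bt)=\tfrac{1}{2}\langle\bt,\mathcal{H}\bt\rangle+\phi_3(\bt)$ with $\phi_3=O(|\bt|^3)$, expands $e^{-n\phi_3}$ as a formal power series in $\phi_3$, and bounds remainders using the identity $e^{-n\phi}=(n|\nabla\phi|^2)^{-1}\langle\nabla\phi,\nabla\rangle e^{-n\phi}$ together with repeated integration by parts to gain arbitrarily many powers of $1/n$ away from the origin. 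Either route yields~\eqref{eq:Goal}; the vanishing statement for $C_0,\dots,C_{\lfloor L/2\rfloor}$ is then immediate from the parity of Gaussian moments.
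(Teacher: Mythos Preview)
The paper does not prove this proposition at all: it is quoted from H\"ormander and Pemantle--Wilson, and the paper explicitly refers the reader to those sources for the argument and the formulas for the $C_k$. So there is no ``paper's own proof'' to compare against; your outline is being measured against the standard references.

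Your three-step plan is the right shape for the \emph{real} Laplace method (where $\phi$ is real-valued with positive-definite Hessian), but it does not cover the general complex-phase statement as written. The gap is already in Step~1: from $\mathrm{Re}\,\phi\geq 0$, $\phi(\bzer)=0$, and $\mathcal{H}$ nonsingular (as a complex matrix), you cannot conclude $\mathrm{Re}\,\phi(\bt)\geq c|\bt|^2$ near $\bzer$, nor $\mathrm{Re}\,\phi\geq c'>0$ on $\mathcal{N}\setminus B_\epsilon$. A one-variable counterexample is $\phi(\theta)=i\theta^2$: here $\mathcal{H}=2i$ is nonsingular, $\mathrm{Re}\,\phi\equiv 0$, and your localization argument gives nothing. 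The same issue recurs in Step~3, where the ``Gaussian moments'' formula $\int \bv^\alpha e^{-\frac12\langle \bv,\mathcal{H}\bv\rangle}d\bv$ presupposes $\mathrm{Re}\,\mathcal{H}>0$; when $\mathcal{H}$ is only complex nonsingular the integral is oscillatory and must be interpreted via contour deformation or as a distributional pairing. You correctly flag the Morse-lemma issue in Step~2, but the localization and moment computations are equally problematic in the genuinely complex case.

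The good news is that your ``alternative'' paragraph is precisely the fix: H\"ormander's argument in \cite[Thm.~7.7.5]{Hormander1990a} avoids both Morse and $\mathrm{Re}\,\phi>0$ by writing $\phi=\tfrac12\langle\bt,\mathcal{H}\bt\rangle+\phi_3$, expanding $e^{-n\phi_3}$, and controlling the non-localized and remainder pieces by repeated integration by parts against $\nabla\phi$ (which is nonvanishing away from $\bzer$ by hypothesis). That route is what actually proves the proposition in the stated generality; your Steps~1--3 should be presented as the special case $\mathrm{Re}\,\mathcal{H}>0$, with the integration-by-parts machinery supplying localization and remainder bounds in general. One minor point: your parity count for the vanishing of the $C_k$ gives $C_0,\dots,C_{\lfloor L/2\rfloor-1}=0$ (and $C_{\lfloor L/2\rfloor}=0$ when $L$ is odd), which is slightly sharper in one direction and weaker in the other than the proposition's ``$C_0,\dots,C_{\lfloor L/2\rfloor}$''---this depends on whether ``vanishes to order $L$'' includes the $L$th derivatives, so be explicit about the convention.
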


The constants $C_k$ are determined by evaluating the application of powers of an explicit differential operator to a sequence of explicit functions depending on $\phi$ and $A$.  As the formulas get quite involved, we refer the reader to H{\"o}rmander~\cite[Theorem 7.7.5]{Hormander1990a} or Pemantle and Wilson~\cite[Lemma 13.3.2]{PemantleWilson2013}. For our results we calculated the constants by writing a simple program in the Maple computer algebra system\footnote{The computations are available for download at at \url{http://weighted-walks.gforge.inria.fr}}. 

Unfortunately we cannot directly apply Proposition~\ref{prop:HighAsm} to Equation~\eqref{eq:CIFexp} as the function $\phi=\theta_1+\theta_2+\theta_3$ admits no critical points (in the usual calculus sense).  The importance of critical points in the context of ACSV comes from the fact that localizing the Cauchy Integral around the critical points results in an integral expression---or sum of expressions---to which Proposition~\ref{prop:HighAsm} can be applied.  We now see how this can be achieved for the different cases laid out in Proposition~\ref{prop:contrib}.

\subsubsection{The Balanced Case}
This is the unweighted model, whose asymptotics have been previously computed~\cite{BoMi10}. Still, the solution via ACSV initiates us to the weighted models, so we present it here.

When $a=b=1$, the rational function under consideration simplifies to 
\[ F(x,y,t) = \frac{yt^2(1+x)(y-x^2)(x-y)(y+x)}{1-txyS(\ox,\oy)}, \]
although we keep $a$ and $b$ as symbolic parameters below as this helps illustrate the form of the general argument.  From Proposition~\ref{prop:contrib} we know that the growth of the coefficient is determined by the singular behaviour of $F$ near the points $c_1^{\pm} = \left(\pm a,b, \pm \oa\ob S(\oa,\ob)^{-1}\right)$, and we derive this again in the course of estimating integrals.  Consider the chains of integration
\begin{align*} 
T_- &= \{ (x,y,t) : |x| = a, |y| = b, |t| = \oa\ob S(\oa,\ob)^{-1} - \epsilon \},\\
T_+ &= \{ (x,y,t) : |x| = a, |y| = b, |t| = \oa\ob S(\oa,\ob)^{-1} +\epsilon \},
\end{align*}
for $\epsilon$ arbitrarily small.  By the Cauchy Integral Formula and minimality of $c_1^{\pm}$ (see Lemma~\ref{lem:minpts}),
\[ f_{n,n}(n) = \frac{1}{(2\pi i)^3} \int_{T_-} \frac{F(x,y,t)}{(xyt)^n}\,\frac{\tn{d}x\,\tn{d}y\,\tn{d}t}{xyt}. \]
As every point on $T_+$ lies outside the domain of convergence of the generating function $F(x,y,t)$, one can show that 
\[ \frac{1}{(2\pi i)^3} \int_{T_+} \frac{F(x,y,t)}{(xyt)^n}\,\frac{\tn{d}x\,\tn{d}y\,\tn{d}t}{xyt} \]
grows exponentially smaller than the diagonal sequence as $n$ approaches infinity.  Defining
\[ D_{\pm} := \{z : |z| = \oa\ob S(\oa,\ob)^{-1} \pm \epsilon\}, \]
it follows that
{\footnotesize 
\begin{align*} 
f_{n,n}(n)  = \frac{1}{(2\pi i)^3} \int_{|x|=a}\int_{|y|=b} &\left( \int_{D_+} \frac{yt^2(1+x)(y-x^2)(x-y)(y+x)}{1-txyS(\ox,\oy)}\frac{\tn{d}t}{(xyt)^{(n+1)}} \right. \\
& \qquad \left. -\int_{D_-} \frac{yt^2(1+x)(y-x^2)(x-y)(y+x)}{1-txyS(\ox,\oy)}\frac{\tn{d}t}{(xyt)^{(n+1)}} \right)\, \tn{d}x\, \tn{d}y + O(K^n)
\end{align*}
}

\noindent
for some constant $K>0$ with $K < \limsup_{n\rightarrow\infty}|f_{n,n}(n)|^{1/n}$.  The inner difference of integrals can then be determined by a residue computation.  

By design, for $\epsilon$ sufficiently small, dominant asymptotics only depend on integration around arbitrarily small neighbourhoods $\mN_+$ and $\mN_-$ of the critical points $c_1^{\pm}$ in the set $\{(x,y):|x|=a,|y|=b\}$, so
{\small
\begin{align}
f_{n,n}(n)  &= \frac{1}{(2\pi i)^2} \sum_{j \in \{+,-\}} \int_{\mN_j} \ox^2\oy S(\ox,\oy)^{-2}(1+x)(y-x^2)(x-y)(y+x)S(\ox,\oy)^n \tn{d}x \tn{d}y + O(K^n) \nonumber \\
&= \frac{S(\oa,\ob)^n}{(2\pi)^2} \sum_{j \in \{+,-\}} \int_{(-\epsilon',\epsilon')^2} A_j(\theta_1,\theta_2) e^{-n \phi_j(\theta_1,\theta_2)} + O(K^n), \label{eq:smooth_ints} 
\end{align}
}
with 
\begin{align*}
A_j(\theta_1,\theta_2) &= G\left(ja e^{i\theta_1}, b r_2e^{i\theta_2} \right), \\
\phi_j(\theta_1,\theta_2) &= -\log  S\left(j \oa e^{-i\theta_1},\ob e^{-i \theta_2}\right) + \log  S\left(\oa,\ob\right).
\end{align*} 
By the definition of minimal critical points, the conditions of Proposition~\ref{prop:HighAsm} are satisfied by these integrals at $c_1^{\pm}$, and applying the proposition gives the asymptotic estimate
\[ f_{n,n}(n) = \frac{8}{\pi} \cdot \frac{4^n}{n^2} \left(1 + O\left(\frac{1}{n}\right)\right). \]

\subsubsection{The Reluctant Case}

The integral expression for the reluctant case is roughly the same as in the balanced case, except that now the factors $(1-x)$ and $(1-y)$, which canceled with factors in the numerator when $a=b=1$, appear in the denominator of $F(x,y,t)$.  In this case, when $a<1$ and $b<1$, Proposition~\ref{prop:contrib} shows that the contributing points have $x$- and $y$-coordinates of modulus less than~$1$.  The argument for the balanced case can then be applied to obtain an expression of the same form as Equation~\eqref{eq:smooth_ints}, since the domain of integration avoids the zero set of $(1-x)(1-y)$.  In particular, we can apply Proposition~\ref{prop:HighAsm} to the integrals in Equation~\eqref{eq:smooth_ints} where now 
\[ A_j = \frac{G\left(ja e^{i\theta_1}, b r_2e^{i\theta_2} \right)}{(1-ja e^{i\theta_1})(1-b r_2e^{i\theta_2})}, \] and the $\phi_j$ are unchanged from the balanced case.  Shifting the asymptotics to take the relationship in Equation~\eqref{eqn:weighted_extraction} between the diagonal sequence and the number of lattice walks into account gives the asymptotics listed in Theorem~\ref{thm:main_asymptotic_result}.

\subsubsection{The Transitional Cases}

The transitional cases are on the boundary between being reluctant and directed.  When $a=1$ and $b<1$, the contributing points $c_1^{\pm}$ have $x$-coordinate of modulus $1$, however the factor of $1-x$ cancels with a factor of $1-x$ which becomes present in the numerator when specializing $a$ to $1$.  When $b=1$ and $a<1$, the contributing points $c_1^{\pm}$ have a $y$-coordinate of $1$, however the factor of $1-y$ cancels with a factor of $1-y$ which becomes present in the numerator when specializing $b$ to $1$.  After this simplification, the same argument as in the balanced and reluctant cases applies.

Note also that this cancellation shows why the balanced, transitional and reluctant cases have the same exponential growth but critical exponents $\alpha$ differing by 1.  The contributing points are the same for each, but the order of vanishing of the numerator $G$ at the critical points is $2$, $3$ and $4$ for balanced, transitional and reluctant, respectively.  Proposition~\ref{prop:HighAsm} shows that when the order of vanishing of the numerators increases one expects the critical exponents to decrease.

\subsubsection{The Directed Cases}

In the directed cases, the contributing points lie on the intersection of the zero set of two factors in the denominator of $F(x,y,t)$.  This requires a slightly different argument, but the general approach is the same.

Suppose first that we are in the case when $a>1$ and $a>b$, so that the unique contributing point has $(x,y)$-coordinates $c_{12} = (1,b/a)$ and lies on $\mV_{12}$.  If we define
\[ \widetilde{G}(x,y) := \frac{G(x,y,t)}{t^2} = y(y-b)(a-x)(a+x)(a^2y-bx^2)(ay-bx)(ay+bx)  \]
then the Cauchy Integral Formula implies that we want asymptotics of
{\small
\begin{align*} 
[t^{n+2}][x^{n+2}][y^{n+2}] \left( \frac{t^2\widetilde{G}(x,y)}{a^4b^3(1-txyS(\ox,\oy))(1-x)(1-y)} \right)  
&= [x^2][y^2] \left( \frac{\widetilde{G}(x,y)S(\ox,\oy)^n}{a^4b^3(1-x)(1-y)} \right) \\[+2mm]
&= \underbrace{\frac{1}{a^4b^3(2 \pi i)^2} \int_{|y|=b/a} \int_{|x|=1-\epsilon} \frac{\widetilde{G}(x,y) S(\ox,\oy)^n}{(1-x)(1-y)} \frac{dxdy}{x^3y^3}}_I,
\end{align*}
}

\noindent
for any fixed $0 < \epsilon < 1$. Furthermore, standard integral bounds imply the existence of constants $C>0$ and $M \in (0,2+a+1/a)$ such that 
\begin{align*} 
\left|  \frac{1}{a^4b^3(2 \pi i)^2} \int_{|y|=b/a} \int_{|x|=1+\epsilon} \frac{\widetilde{G}(x,y) S(\ox,\oy)^n}{(1-x)(1-y)} \frac{dxdy}{x^3y^3} \right|
&\leq C \cdot M^n.
\end{align*}
Thus, as $n \rightarrow \infty$,
\begin{align*} 
I = \frac{1}{a^4b^3(2 \pi i)^2} \int_{|y|=b/a} & \left(\int_{|x|=1-\epsilon} \frac{\widetilde{G}(x,y) S(\ox,\oy)^n}{(1-x)(1-y)} \frac{dxdy}{x^3y^3} \right. \\
&\left. \qquad\qquad\qquad - \int_{|x|=1+\epsilon} \frac{\widetilde{G}(x,y) S(\ox,\oy)^n}{(1-x)(1-y)} \frac{dxdy}{x^3y^3} \right)dy + O(M^n).
\end{align*}
Cauchy's residue theorem then implies
\begin{align} 
I &= \frac{1}{a^4b^3(2 \pi i)} \int_{|y|=b/a} \frac{\widetilde{G}(1,y) S(1,\oy)^n}{1-y} \frac{dy}{y^3} + O(M^n)\notag \\[+2mm] 
&= \frac{(2+a+1/a)^n}{a^4b^3(2 \pi)} \int_{-\pi}^\pi A(\theta) e^{-k \phi(\theta)} d\theta + O(M^n), \label{eq:DirFL}
\end{align}
where 
\[ A(\theta) = \frac{\widetilde{G}(1,(b/a)e^{i\theta})}{(b/a)^2e^{2i\theta}(1-(b/a)e^{i\theta})} \qquad \text{and} \qquad \phi(x,y) = \log S(1,a/b) -\log S\left(1,(a/b)e^{-i\theta}\right). \]

Since $c_{12}$ is a contributing singularity, it can be shown that the integral in Equation~\eqref{eq:DirFL} has a single critical point at $\theta =0$, to which Proposition~\ref{prop:HighAsm} (which is classical in the univariate case) can be applied.  In the other directed case, when $b>1$ and $\sqrt{b}>a$, the analysis is the same except that the existence of two contributing points, with $(x,y)$-coordinates $c_{13}^{\pm}$, implies that asymptotics are ultimately obtained from a sum of two univariate Fourier-Laplace integrals.

\paragraph{Comparison to the Method of Multivariate Residues}

Above we have adapted the tools of saddle-point analysis in order to determine asymptotics when our contributing points lie on a strata defined by the intersection of two smooth manifolds.  The theory of analytic combinatorics in several variables, as developed in Pemantle and Wilson~\cite{PemantleWilson2013}, takes a more uniform (but less elementary) approach to such integrals using the theory of multivariate residues.  The theoretical groundwork for multivariate complex residues dates back to Leray, and is well presented in the text of A{\u\i}zenberg and Yuzhakov~\cite{AuizenbergYuzhakov1983}.  Instead of detailing the theory here we refer the reader to Section~10.2 of Pemantle and Wilson~\cite{PemantleWilson2013}, which describes how to calculate the multivariate residue of a rational function over a fixed strata.  

Consider again the case when $a>1$ and $a>b$, so that the unique contributing point has $(x,y)$-coordinates $c_{12} = (1,b/a)$ and lies on $\mV_{12}$.  As $c_{12}$ defines the unique contributing point, Theorem 10.2.2 of Pemantle and Wilson~\cite{PemantleWilson2013} implies that diagonal asymptotics are determined by the integral 
\[ \frac{1}{2\pi i} \int_{\sigma} \frac{\widetilde{G}(1,y) S(1,\oy)^n}{1-y} \frac{dy}{y^3}, \]
where $\sigma$ is a one-dimensional curve of integration in $\mV_{12}$ containing the contributing point.  Unfortunately, it can be difficult in general to obtain an explicit description of the possible domains of integration $\sigma$ using this approach.  When the numerator $G$ of the rational function under consideration does not vanish at a contributing point, Pemantle and Wilson~\cite[Theorem 10.3.4]{PemantleWilson2013} give an explicit formula for its dominant asymptotic contribution using the multivariate residue approach.

\subsubsection{The Free Case}
In the free case $\sqrt{b}<a<b$ there is exactly one contributing point, whose $(x,y)$-coordinates are $(1,1)$.  This point lies on the strata determined by the intersection of the three varieties $\mV_1,\mV_2,\mV_3$; when the co-dimension of a strata equals the dimension of the ambient space, as it does here, the intersection of varieties is called a \emph{complete intersection}.  Complete intersections have an interesting property: the three dimensional (multivariate) residue, when defined, is a constant!  This means that for a complete intersection one can determine dominant asymptotics up to an exponentially smaller error term (instead of a polynomial error term as in most cases above).

Indeed, Theorem 10.3.1 of Pemantle and Wilson~\cite{PemantleWilson2013} explicitly gives the dominant asymptotics when the contributing point lies on a complete intersection and the numerator $G$ does not vanish at this point.  This faster, exponential, decrease in the relative error can be observed numerically while examining terms of modest size (less than 100).

\subsubsection{The Axial Cases}
The axial cases are on the boundary of the directed cases and the free case.  Unfortunately, the above approaches cannot be directly used for these cases as the numerator $G$ vanishes at the contributing points and $\mV$ is locally a complete intersection at these points. Luckily, we can decompose the rational function under consideration into two simpler rational functions and analyze them.

Consider the case when $a=b>1$, so that we have the generating function relationship $Q_{\mathfrak a}(1,1;t) = \frac{1}{a^4t^2} \Delta F(x,y,t)$ with 
\[ F(x,y,t) = \frac{yt^2(y-a)(a-x)(a+x)(ay-x^2)(y-x)(y+x)}{(1-txyS(\ox,\oy))(1-x)(1-y)}, \]
and contributing point $(1,1,1/S(1,1))$.  Because we cannot analyze the above diagonal we use a trick, also employed by Melczer and Wilson~\cite{MeWi16}, to write $y-x = (1-x)-(1-y)$ and see that
{\small \[ F(x,y,t) =  \underbrace{\frac{yt^2(y-a)(a-x)(a+x)(ay-x^2)(y+x)}{(1-txyS(\ox,\oy))(1-y)}}_{F_1(x,y,t)} - \underbrace{\frac{yt^2(y-a)(a-x)(a+x)(ay-x^2)(y+x)}{(1-txyS(\ox,\oy))(1-x)}}_{F_2(x,y,t)}. \]
}
As the diagonal operator is linear, we can obtain the desired walk asymptotics by studying $\Delta F_1$ and $\Delta F_2$.  Applying the same arguments as Section~\ref{sec:contrib} shows that $F_1$ admits contributing points with $(x,y)$-coordinates $c_{13}^{\pm}=(\pm\sqrt{a},1)$ while $F_2$ admits the contributing point with $(x,y)$-coordinates $c_{12}=c_{123}$.  The diagonal of $F_2$ has larger exponential growth, and its numerator does not vanish at the contributing point, meaning Theorem 10.3.4 of Pemantle and Wilson~\cite{PemantleWilson2013} can be used to determine dominant asymptotics.

When $b=a^2$ the argument is analogous except the numerator contains $y-x^2 = (1-x)(1+x)-(1-y)$ as a factor and this is used to decompose the diagonal into a sum of two simpler diagonals, which are then analyzed.

\subsection{Excursion asymptotics}
The diagonal expression in Equation~\eqref{eqn:weighted_extraction} was obtained through a power series extraction in Equation~\eqref{eq:Qpospart}.  The reason for the factors $(1-x)$ and $(1-y)$ in the denominator of the expression to convert the series extraction into a diagonal is to account for terms corresponding to all non-negative powers of $x$ and $y$.  Thus, the generating function~$Q_{\mathfrak a}(0,0;t)=\sum_{n\geq0} e_{(0,0)\rightarrow (0,0)} t^n$ for walks beginning and ending at the origin can be expressed as the diagonal
\begin{equation}
\label{eqn:exc_weighted_extraction_nonan}
Q_{\mathfrak a}(0,0;t) = \Delta \left( \frac{(y-b)(a-x)(a+x)(a^2y-bx^2)(ay-bx)(ay+bx)}{a^4b^3x^2y(1-txyS(\ox,\oy))} \right),
\end{equation}
which is the diagonal in Equation~\eqref{eqn:weighted_extraction_nonan} except for a factor of $(1-x)(1-y)$ in the denominator.  This means that excursion asymptotics are easy to calculate: the singular variety is now smooth and the smooth critical points with $(x,y)$-coordinates $c_1^{\pm} = (\pm a,b)$ are always contributing for any non-negative choice of weights $a$ and $b$.  The arguments detailed above then yield the following result.

\begin{thm}\label{thm:excursion_asympt}
For any non-negative weights $a, b>0$, the number of excursions $e_{(i,j)\rightarrow(0,0)}(n)$ has dominant asymptotics
\[ e_{(i,j)\rightarrow(0,0)}(n) =
\begin{cases}
\frac{4^n}{n^5} \left(\frac{128(j+1)(1+i)(3+i+2j)(2+i+j)}{a^ib^j\pi} + O\left(\frac{1}{n}\right)\right) &\text{if } n+i \equiv 0 \pmod{2}, \\
0 &\text{if } n+i \equiv 1 \pmod{2}.
\end{cases}\]
\end{thm}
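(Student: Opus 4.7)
The strategy is to apply the smooth ACSV machinery of Section~4 directly to the excursion diagonal. First, I would extend~(33) to an arbitrary starting point $(i,j)$: running through the kernel-method derivation with the initial monomial $x^{i+1}y^{j+1}$ in place of $xy$ (as mentioned after~(25)) produces a diagonal
\[Q_{\mathfrak a}^{i,j}(0,0;t)=\Delta\!\left(\frac{N_{i,j}(x,y)}{a^{4+i}b^{3+j}\,x^{2+i}y^{1+j}\,(1-txyS(\ox,\oy))}\right),\]
where $N_{i,j}$ is the GB orbit-sum polynomial, specialising at $i=j=0$ to the numerator of~(33). Because the denominator carries neither $(1-x)$ nor $(1-y)$, the singular variety is the smooth stratum $\mV_1$ alone; by Section~4.2 its only critical points are $c_1^{\pm}=(\pm a,b)$, and both are automatically minimal---hence contributing---for \emph{every} $a,b>0$, with no case distinction required.

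The parity dichotomy falls out of the Cauchy-integral/Fourier--Laplace argument of Section~4.4.1. After the $t$-residue one obtains a 2-dimensional integral over the torus $|x|=a$, $|y|=b$; parameterising $x=ae^{i\theta_1}$, $y=be^{i\theta_2}$ gives $S(\ox,\oy)=2\cos\theta_1+2\cos(\theta_2-\theta_1)$, which equals $+4$ near $c_1^+$ (at $\theta_1=\theta_2=0$) and $-4$ near $c_1^-$ (at $\theta_1=\pi$, $\theta_2=0$). Therefore $S^n$ carries an extra $(-1)^n$ at $c_1^-$, while the Cauchy phase $e^{-i((2+i)\theta_1+(1+j)\theta_2)}$ picks up $(-1)^i$ there. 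Summing, the two contributions combine into $\bigl(1+(-1)^{n+i}\bigr)$ times the $c_1^+$ contribution alone, vanishing when $n+i$ is odd and doubling when $n+i$ is even.

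The substantive task is the explicit evaluation at $c_1^+$. The phase $\phi=-\log(S/4)$ has non-degenerate Hessian with $\det H=\frac14$, diagonalised by the change of variables $u'=\theta_1-\theta_2/2$, $v'=\theta_2$ into $\phi=u'^2/2+v'^2/8$. The amplitude $A=N_{i,j}(ae^{i\theta_1},be^{i\theta_2})\,e^{-i((2+i)\theta_1+(1+j)\theta_2)}$ vanishes to order~$4$ at the origin (four factors of $N_{i,j}(a,b)$ are zero), and the main obstacle is to show that its Taylor contributions against the Gaussian $e^{-n(u'^2/2+v'^2/8)}$ all cancel through degree~$7$: the degree-$4$ term becomes $-\frac12 u'v'^3+2u'^3v'$ (odd in both $u'$ and $v'$), the degree-$5$ and degree-$7$ terms vanish by odd total degree, and the even-even monomials arising at degree~$6$ cancel thanks to the specific orbit-sum structure of $N_{i,j}$. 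The first surviving contribution thus appears at degree~$8$, and Proposition~4 yields the $n^{-1-4}=n^{-5}$ decay. Extracting the explicit polynomial $128(1+i)(j+1)(2+i+j)(3+i+2j)/\pi$ from the resulting degree-$8$ Taylor coefficients is the most delicate piece; carried out symbolically as for Theorem~1, it produces a polynomial proportional to the balanced discrete harmonic function $V(i,j)$ of~(11), consistently with $\rho=4$ coinciding with the balanced exponential growth.
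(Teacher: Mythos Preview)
Your approach is essentially the paper's own: once the factors $(1-x)(1-y)$ are dropped from the denominator in~(33), the singular variety is the smooth stratum $\mV_1$ alone, the points $c_1^{\pm}=(\pm a,b)$ are minimal critical (hence contributing) for every $a,b>0$, and the balanced/reluctant Fourier--Laplace machinery of Section~4.4 applies verbatim---exactly what the paper means by ``the arguments detailed above then yield the following result.'' Your explicit derivation of the parity factor $(1+(-1)^{n+i})$ via the two saddles, and your diagonalisation $\phi=-\log\cos u'-\log\cos(v'/2)$ showing that the degree-$4$ part of the amplitude is odd--odd in $(u',v')$, are correct refinements that the paper leaves implicit in its appeal to symbolic computation. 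The one step you should not oversell is the degree-$6$ cancellation: it is not a consequence of ``orbit-sum structure'' in any transparent way, and (as in the reluctant case of Theorem~1) is in practice verified by the Maple computation you allude to at the end, together with the observation that $g=\phi-\tfrac12 u'^2-\tfrac18 v'^2$ begins at degree~$4$ and is even--even, so that the $m\geq1$ terms in H\"ormander's formula for $C_3$ vanish automatically.
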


By suitably modifying the diagonal expression one can also calculate asymptotics for walks beginning at any point and ending at any point or axis using the above arguments.   

\subsection{General ending point}
As mentioned above, the denominator $H(x,y,t)$ of the diagonal expression obtained for the generating function $Q_{\mathfrak a}^{i,j}(x,y;t)$ for walks ending at $(i,j)$ is the same as the denominator for the walks ending at the origin.  This means that all the above results about minimal and critical points continue to hold when $i$ and $j$ are included as parameters.  In fact, the degree of vanishing of the numerator at each contributing point is independent of the starting point $(i,j)$, which is why the critical exponents $\alpha$ depend only on the weights $a$ and $b$.  Repeating the above analysis with the numerator $G(x,y)$ replaced by the new expression parametrized by $i$ and $j$ yields the formulas in Appendix~\ref{sec:values_harmonic_function}.

\part{General considerations for weighted models}
\label{part:CentralWeights}
Restricting ourselves to central weightings results in models that are well suited to an analysis through generating functions, and we begin our discussions in this section by providing different characterizations of central weightings.  The characterizations we derive have significant consequences, and we illustrate several of them using our results on GB walks in Section~\ref{sec:Diagrams}. We conclude with some remarks on related properties of the generating functions under consideration.

\section{Central weightings}
\label{sec:central}

\subsection{Characterizing central weights}
\label{sec:characterize}
In the first part of this article we considered the GB model under the particular assignment of weights in Equation~\eqref{eq:GB_weighting}. This section gives insights on general weighted models, illustrating our choice of these weights.  Here we consider step sets $\mS \subset \mathbb{Z}^d$ in arbitrary dimension $d$ and given $s \in \mS$ we denote the $k$th coordinate of $s$ by $\pi_k(s)$.   Weights assigned to steps will always be positive.  Recall the following definition of central weights.

\begin{defn}
A weighting is \emph{central} if all paths with the same length, start and end points have the same weight.
\label{def:central}
\end{defn}

We give other characterizations of this definition, and see that this is a good framework in which to study weighted models.  It leads to natural generating function relations, and permits us to study the asymptotic behaviour in a unified way. A step set $\mS \subset \mathbb{Z}^d$ is called \emph{non-singular} if it is not contained in a half-space of $\mathbb{R}^d$.

\begin{thm}
\label{thm:centralweighting}
Let $\mS \subset \mathbb{Z}^d$ be a set of non-singular integer steps of dimension $d$. A weighting is central if and only if any of the following equivalent statements holds:
\begin{enumerate}
\item\label{(i)}For every point $(i_1,\dots,i_d)$ and number $n$, each walk starting at the origin and ending at $(i_1,\dots,i_d)$ in $n$ steps, while staying in $\mathbb R_{\geq 0}^d$, has the same weight;
\item\label{(ii)}There exist constants $\alpha_1,\dots,\alpha_d$ and $\beta$ such that the weight assigned to each step $s \in \mS$ is of the form $a_s = \beta \prod_{k=1}^d \alpha_k^{\pi_k(s)}$;
\item\label{(iii)}There exist constants $\alpha_1,\dots,\alpha_d$ and $\beta$ such that
\[K_{\mathfrak a}(x_1,\dots,x_d;t) = K(\alpha_1 x_1,\dots, \alpha_d x_d;\beta t), \]
where $K_{\mathfrak a}(x_1,\dots,x_d;t)$ denotes the kernel for a model of walks with a weight $a_s$ for each step $s \in \mS$, and $K(x_1,\dots,x_d;t)$  the kernel for the unweighted model:
\[K_{\mathfrak a}(x_1,\dots,x_d;t) = 1 -  t\,\sum_{s \in \mS} a_s \prod_{i=1}^d x_k^{\pi_k(s)}, \quad K(x_1,\dots,x_d;t) = 1 - t \,\sum_{s \in \mS}  \prod_{i=1}^d x_k^{\pi_k(s)}.\]
\end{enumerate}
\end{thm}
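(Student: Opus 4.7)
The plan is to establish the cycle of implications (iii) $\Leftrightarrow$ (ii) $\Rightarrow$ central $\Rightarrow$ (i) $\Rightarrow$ (ii). The first three arrows are direct calculations; the last is the heart of the argument and uses non-singularity in an essential way.

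\emph{Easy steps.} The equivalence (ii) $\Leftrightarrow$ (iii) follows from substituting $x_k \mapsto \alpha_k x_k$ and $t \mapsto \beta t$ into the definition of $K$ and comparing monomial coefficients. For (ii) $\Rightarrow$ central, the key observation is that under the assumed factored form of $a_s$, the weight of any $n$-step walk with steps $s_1,\dots,s_n$ from $u$ to $v$ telescopes,
\[
\prod_{i=1}^n a_{s_i} \;=\; \beta^n \prod_{k=1}^d \alpha_k^{\sum_i \pi_k(s_i)} \;=\; \beta^n \prod_{k=1}^d \alpha_k^{v_k - u_k},
\]
and so depends only on $n$, $u$, and $v$. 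Finally, central $\Rightarrow$ (i) is immediate since (i) is a special case of the definition.

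\emph{Setting up (i) $\Rightarrow$ (ii).} Set $w_s := \log a_s$; the goal is to show $w_s = c_0 + \sum_{k=1}^d c_k \pi_k(s)$ for some real constants $c_0,\dots,c_d$. This is equivalent to $(w_s)_{s\in\mS}$ being orthogonal in $\mathbb{R}^{\mS}$ to every integer vector $k \in \mathbb{Z}^{\mS}$ satisfying
\[
\sum_s k_s = 0 \qquad \text{and} \qquad \sum_s k_s\, s = 0,
\]
because the real kernel of this linear system is rational and therefore spanned by its integer points. Any such $k$ splits as $k = n - m$ with $n, m \in \mathbb{Z}_{\geq 0}^{\mS}$ having common total length $N$ and common vector sum $v$; the multisets $n$ and $m$ thus encode two unordered collections of $N$ steps that would take the origin to $v$. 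If both can be realized as orthant-confined walks from $0$, then hypothesis (i) forces $\prod_s a_s^{n_s} = \prod_s a_s^{m_s}$, i.e.\ $\sum_s k_s w_s = 0$, as desired.

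\emph{Buffer construction and main obstacle.} Arbitrary multisets $n, m$ need not admit orderings as orthant walks from the origin, so I employ a buffer trick. Non-singularity of $\mS$ (no half-space of $\mathbb{R}^d$ contains it) forces $\mS$ to positively span $\mathbb{R}^d$, so for any $M > 0$ there is a non-negative rational combination $\sum_s \lambda_s\, s = (M,\dots,M)$, and clearing denominators yields a multiset $r \in \mathbb{Z}_{\geq 0}^{\mS}$ whose vector sum $p := \sum_s r_s\, s$ has every coordinate at least $M$. A greedy argument (at each stage, select a remaining step in $r$ whose use keeps all coordinates non-negative, which is always possible by positive spanning whenever the current position lies in the strict interior of the orthant) orders $r$ into an orthant walk $w_0$ from $0$ to $p$. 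Choosing $M \geq N \cdot \max_{s \in \mS} \max_k |\pi_k(s)|$, the concatenation of $w_0$ with any ordering of $n$ (resp.\ of $m$) remains in the orthant, since each coordinate can vary by at most $M$ over the appended portion. Applying (i) to these two orthant walks of equal length $|w_0| + N$ and common endpoint $p + v$, and cancelling the common factor contributed by $w_0$, yields $\prod_s a_s^{n_s} = \prod_s a_s^{m_s}$. The main obstacle is making the buffer construction rigorous: for singular step sets the greedy choice can fail and (i) becomes genuinely weaker than (ii), which is precisely why the non-singularity hypothesis is imposed.
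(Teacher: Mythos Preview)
Your argument is correct and takes a different route for the hard implication (i)~$\Rightarrow$~(ii). The paper inserts an auxiliary condition (iv): after fixing a subset $\mathcal T \subset \mS$ of size $d+1$, for each $s \notin \mathcal T$ one exhibits a specific pair of unrestricted paths $(p_s, p'_s)$ with the same length and endpoint, and (iv) asserts the $|\mS|-d-1$ multiplicative relations $\prod_{r \in p_s} a_r = \prod_{r' \in p'_s} a_{r'}$. The paper then proves (i)~$\Rightarrow$~(iv) via the buffer trick and (iv)~$\Rightarrow$~(ii) by a dimension count: the image of the step matrix $M_\mS$ and the solution set $E$ of the relations (iv) are both $(d{+}1)$-dimensional subspaces of $\mathbb R^{\mS}$, hence coincide. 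You bypass (iv) entirely, observing that $(\log a_s)_s$ lies in $\textrm{Im}(M_\mS)$ iff it is orthogonal to every integer vector in the kernel of $M_\mS^T$, and verifying this orthogonality one kernel vector at a time by the same buffer trick. Your route is more streamlined and avoids singling out a basis $\mathcal T$; the paper's route buys the explicit finite list (iv) of relations characterizing centrality, which is exploited elsewhere in the article. One caveat: your greedy ordering of the buffer multiset $r$ is not properly justified---positive spanning is a property of $\mS$, not of the \emph{remaining} submultiset at a given stage, so the greedy choice can get stuck, and you say nothing about the first step from the origin, which is not in the strict interior. What you actually need, and what the paper likewise simply asserts, is only the existence of \emph{some} orthant walk $w_0$ from $0$ to a point with all coordinates at least $M$; there is no need to order that particular $r$.
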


Although most of the induced implications are quite straightforward to show, the proof of this theorem will be postponed to the next subsection; the implications $\ref{(i)} \Rightarrow \ref{(ii)}$ and $\ref{(i)} \Rightarrow \ref{(iii)}$ are quite delicate to prove and require an intermediate definition which is quite long to state (see  Proposition~\ref{prop:productweights}).  In probability theory, the weights $a_s$ given in Condition~\ref{(ii)} constitute an exponential change (sometimes called a Cram\'er transform) of the uniform weights on $\mathcal S$.

\begin{example} 
For the Gouyou-Beauchamps model with weight assignment \eqref{eq:GB_weighting}, we recover the definition of central weighting via the substitutions $\alpha_1=a$, $\alpha_2 = b$ and $\beta = 1$. We have lost a degree of freedom by setting $\beta = 1$ but it can be simply restored by multiplying the numbers of walks of length $n$ by $\beta^n$. 
\end{example}

Note that Condition~\ref{(ii)} can be expressed in a matrix form. Let $M_{\mS}$ be the matrix
\begin{equation}
M_{\mS}=
\begin{pmatrix}
\pi_1(s_1) & \pi_2(s_1) & \cdots & \pi_d(s_1) & 1 \\
\pi_1(s_2) & \pi_2(s_2) & \cdots & \pi_d(s_2) & 1 \\
\ \ \ \vdots & \ \ \ \vdots & \ & \ \ \ \vdots & \vdots\\
\pi_1(s_m) & \pi_2(s_m) & \cdots & \pi_d(s_m) & 1
\end{pmatrix},
\label{eq:MatrixS}
\end{equation}
where $\mS = \{s_1,s_2,\dots,s_m\}$.  Then a weighting $(a_s)_{s \in \mS}$ satisfies Condition~\ref{(ii)} if and only if there exist constants $\alpha_1,\dots,\alpha_d$ and $\beta$ such that
\[
\begin{pmatrix} \log(a_{s_1}) \\ \log(a_{s_2}) \\\ \ \ \vdots \\ \log(a_{s_m}) \end{pmatrix}  =  
M_{\mS}  \begin{pmatrix} \log(\alpha_1) \\ \ \ \ \vdots \\ \log(\alpha_d) \\ \log(\beta) \end{pmatrix}. 
\]

\begin{lem} 
\label{lem:rank}
If $\mS$ is a set of non-singular integer steps of dimension $d$, then the rank of the matrix $M_{\mS}$ defined by \eqref{eq:MatrixS} is $d+1$.
\end{lem}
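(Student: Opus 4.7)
The matrix $M_{\mS}$ has $d+1$ columns, so its rank is at most $d+1$; the plan is to show the columns are linearly independent by taking advantage of the non-singularity hypothesis. Suppose for contradiction that the columns are linearly dependent. Then there exist scalars $c_1,\dots,c_d,c_{d+1}$, not all zero, such that
\[ c_1 \pi_1(s) + c_2 \pi_2(s) + \cdots + c_d \pi_d(s) + c_{d+1} = 0 \]
for every $s \in \mS$. Setting $\mathbf{c} = (c_1,\dots,c_d) \in \mathbb{R}^d$, this says $\mathbf{c} \cdot s = -c_{d+1}$ for all $s \in \mS$.

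I would then split into two cases according to whether $\mathbf{c}$ is zero. If $\mathbf{c} = \mathbf{0}$, the dependence relation forces $c_{d+1}=0$ as well (assuming $\mS$ is non-empty, which follows from non-singularity), contradicting the assumption that not all $c_i$ vanish. Hence $\mathbf{c} \neq \mathbf{0}$, and the equation $\mathbf{c} \cdot s = -c_{d+1}$ shows that every step of $\mS$ lies on the affine hyperplane $\{x \in \mathbb{R}^d : \mathbf{c} \cdot x = -c_{d+1}\}$. In particular $\mathbf{c} \cdot s$ is the same (possibly zero) constant for every $s \in \mS$, so the sign of $\mathbf{c} \cdot s$ is constant on $\mS$. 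Therefore $\mS$ is contained in the closed half-space $\{x \in \mathbb{R}^d : \mathbf{c} \cdot x \geq -c_{d+1}\}$ (or the opposite one, depending on sign), contradicting the hypothesis that $\mS$ is non-singular.

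The only mild subtlety is making sure this argument covers the case $c_{d+1} = 0$: there, the equation $\mathbf{c} \cdot s = 0$ places $\mS$ inside a linear hyperplane through the origin, which is itself contained in either of the two closed half-spaces it bounds, so this still contradicts non-singularity. In every scenario we reach a contradiction, so the columns of $M_{\mS}$ are linearly independent and $\operatorname{rank}(M_{\mS}) = d+1$.
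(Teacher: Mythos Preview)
Your proof is correct and follows essentially the same approach as the paper: both arguments show that a rank deficiency forces all steps to lie on an affine hyperplane, hence in a half-space, contradicting non-singularity. The only difference is that you work with a column dependence of $M_{\mS}$ (directly producing the defining equation $\mathbf{c}\cdot s = -c_{d+1}$ of the hyperplane), whereas the paper argues via rows, extracting $d$ spanning steps and showing every step is an affine combination of them; your route is slightly more direct.
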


\begin{proof} 
Let us assume that the rank of $M_{\mS}$ is at most $d$: we are going to prove that $\mS$ is contained in a half-space (and thus $\mS$ defines a singular model).  Since the rank of $M_{\mS}$ is at most $d$, and any non-singular model has at least $d+1$ steps, there exist $d$ steps $t_1,\dots,t_d$ such that the family  $(\pi_1(t_j),\dots,\pi_d(t_j),1)$ indexed by ${j \in \left\{1,\dots d\right\}}$  spans every other vector $(\pi_1(s),\dots,\pi_d(s),1)$ with $s \in \mS$. In other words, every step $s \in \mS$ belongs to
\[ A := \left\{ \left. \sum_{j=1}^d q_j \, t_j \ \ \right| \ \
    (q_1,\dots,q_d) \in \mathbb{Q}^d \textrm{ with } \sum_{j=1}^d q_j
    = 1 \right\}. \]
The set $A$ is an affine hyperplane contained in the linear span of
$\left\{t_j\right\}_{j \in \{1,\dots,d\}}$, so it is an affine subspace
of $\mathbb R^d$ with dimension at most $d - 1$. Therefore
$A$, and a fortiori~$\mS$, is contained in a half-space so the model
is singular.

Thus, since $M_{\mS}$ has $d+1$ columns, if $\mS$ is non-singular then the rank of $M_{\mS}$ equals $d+1$.
\end{proof}

\subsection{A combinatorial interpretation of the relations}
\label{sec:paths}
We now combinatorially interpret the result of Lemma~\ref{lem:rank} in terms of paths.

\begin{proposition} 
\label{prop:p1p2}
Given a non-singular integer step set~$\mS$, there exists a subset $\mathcal T$ of $\mS$ with $d+1$ steps such that for every~$s \in \mS \setminus \mathcal T$ there are two paths $p_s$ and~$p'_s$ in~$\mathbb R^d$ (not necessarily $\mathbb{R}_{\geq0}^d$) such that:
\begin{itemize}
\item $p_s$ and $p'_s$ begin at the origin, and have the same length and the same endpoint;
\item $p_s$ contains $s$ as a step, with all its other steps belonging to $\mathcal T$;
\item $p'_s$ only uses steps in $\mathcal T$.
\end{itemize}
\end{proposition}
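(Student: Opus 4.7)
The plan is to use Lemma~\ref{lem:rank} to choose $\mathcal{T}$ as a set of $d+1$ steps whose augmented rows in $M_{\mS}$ form a basis of $\mathbb{R}^{d+1}$, and then to build $p_s$ and $p_s'$ from the coefficients of this basis expansion.

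First, since Lemma~\ref{lem:rank} gives that the matrix $M_{\mS}$ has rank $d+1$, I can select $d+1$ steps $t_1,\dots,t_{d+1} \in \mS$ such that the family of augmented vectors $\bigl((\pi_1(t_j),\dots,\pi_d(t_j),1)\bigr)_{j=1,\dots,d+1}$ is a $\mathbb{Q}$-basis of $\mathbb{Q}^{d+1}$. Define $\mathcal{T} := \{t_1,\dots,t_{d+1}\}$. For any step $s \in \mS\setminus\mathcal{T}$, there exist unique rational numbers $q_1,\dots,q_{d+1}$ satisfying the system
\begin{equation*}
s = \sum_{j=1}^{d+1} q_j\, t_j, \qquad \sum_{j=1}^{d+1} q_j = 1.
\end{equation*}
The second identity comes from equating the last coordinates of the augmented vectors, and is the key combinatorial constraint: it will force $p_s$ and $p_s'$ to have the same length.

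Since the $q_j$ need not be integers, the natural temptation to use $s$ exactly once fails. Instead, I would clear denominators by choosing a positive integer $N$ such that $n_j := N q_j \in \mathbb{Z}$ for all $j$. This yields $N s = \sum_{j=1}^{d+1} n_j\, t_j$ and $\sum_{j=1}^{d+1} n_j = N$. Writing $n_j = n_j^+ - n_j^-$ with $n_j^{\pm} \in \mathbb{Z}_{\geq 0}$ and $n_j^+ n_j^- = 0$, the equation rearranges as
\begin{equation*}
N s + \sum_{j=1}^{d+1} n_j^-\, t_j = \sum_{j=1}^{d+1} n_j^+\, t_j.
\end{equation*}

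From here, I would take $p_s$ to be any path starting at the origin whose steps consist of $N$ copies of $s$ together with $n_j^-$ copies of each $t_j$ (in any order), and $p_s'$ to be any path starting at the origin using $n_j^+$ copies of each $t_j$. Both paths lie in $\mathbb{R}^d$, which is acceptable since we have no positivity constraint. They share the endpoint $\sum_j n_j^+\, t_j$ by the displayed identity, and their common length is $N + \sum_j n_j^-= \sum_j n_j^+$, which follows from $\sum_j n_j = N$. Finally $p_s$ contains $s$ (in fact $N$ times) and all its other steps are in $\mathcal{T}$, while $p_s'$ only uses steps in $\mathcal{T}$, as required.

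There is no serious obstacle: the only subtlety is that one cannot in general take $s$ exactly once in $p_s$, because the coefficients $q_j$ need not be integers; this is precisely why the common denominator $N$ appears and why the statement is phrased in terms of ``containing $s$ as a step'' rather than ``containing $s$ exactly once.''
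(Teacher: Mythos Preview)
Your proof is correct and follows essentially the same approach as the paper: pick $\mathcal{T}$ as $d+1$ steps whose augmented vectors form a basis (using Lemma~\ref{lem:rank}), express the augmented vector of $s$ rationally in this basis, clear denominators, and split the resulting integer relation by sign to build the two paths. Your write-up is in fact slightly more explicit than the paper's, in particular in checking that the lengths agree via $N+\sum_j n_j^- = \sum_j n_j^+$.
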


\begin{example} 
	Consider the Gouyou-Beauchamps model with the set $\mathcal T = \{(1,0),(-1,0),(1,-1)\}$. For $s = (-1,1)$ we can choose $p_s$ to be the concatenation of $(-1,1)$ and $(1,-1)$, and $p'_s$ to be the concatenation of $(1,0)$ and $(-1,0)$. 
\end{example}

\begin{proof} 
By Lemma~\ref{lem:rank}, the rank of $M_{\mS}$ is $d+1$, hence we can find a set $\mathcal T$ of $d+1$ steps such that $(\pi_1(t),\dots,\pi_d(t),1)_{t \in \mathcal T}$ spans every other vector $(\pi_1(s),\dots,\pi_d(s),1)$. Given a step $s \in \mS \setminus \mathcal T$, the vector $(\pi_1(s),\dots,\pi_d(s),1)$ is then a linear combination of $(\pi_1(t),\dots,\pi_d(t),1)_{t \in \mathcal T}$ with rational coefficients. Thus, we can multiply this linear combination by the coefficients' common denominator to obtain a relation with only integer numbers. Reorganizing terms according to their signs induces a relation of the form
{\small 
\begin{equation}
n_s\,(\pi_1(s),\dots,\pi_d(s),1) + \sum_{j=1}^{\ell} n_{s,t_j} (\pi_1(t_j),\dots,\pi_d(t_j),1) = \sum_{j=1}^{\ell'} n'_{s,t'_j} (\pi_1(t'_j),\dots,\pi_d(t'_j),1),
\label{eq:linearcombi}
\end{equation} 
}
where $t_1,\dots,t_{\ell},t'_{1},\dots,t'_{\ell'}$ are steps of $\mathcal T$ and $n_s,n_{s,t_1},\dots,n_{s,t_{\ell}},n'_{s,t'_1},\dots,n'_{s,t'_{\ell'}}$ are positive integers. We prove the proposition by considering $p_s$ as any path formed by $n_s$ steps of the type $s$ and $n_{s,t_j}$ steps of the type $t_j$, and $p'_s$ as any path formed by $n_{s,t'_j}$ steps of the type $t'_j$.
\end{proof}

We can now give a last equivalent definition of central weighting.

\begin{proposition} 
\label{prop:productweights}
Consider a non-singular integer step set~$\mS$ in $\mathbb R^d$, along with a set $\mathcal T$ and the corresponding $|\mS| -|\mathcal T| = |\mS|-d-1$ pairs of paths $(p_s,p'_s)_{s \in \mS \setminus \mathcal T}$ described in Proposition~\ref{prop:p1p2}.

A weighting is central if and only if 
\begin{enumerate}
\setcounter{enumi}{3}
\item\label{(iv)} for every $s \in \mS \setminus \mathcal T$, the weights $(a_r)_{r \in \mS}$ satisfy
\begin{equation}
 \prod_{r \in p_s} a_r = \prod_{r' \in p'_s} a_{r'}, \label{eq:prodas}
\end{equation}
where the steps are considered with multiplicity inside each product.
\end{enumerate}
\end{proposition}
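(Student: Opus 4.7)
The plan is to prove the two directions by combining a translation argument with a linear-algebra computation, passing through condition~\ref{(ii)} of Theorem~\ref{thm:centralweighting}. Note that~\ref{(ii)} immediately implies centrality: under~\ref{(ii)}, the weight of any path with steps $r_1,\dots,r_n$ evaluates to $\beta^n\prod_k \alpha_k^{\sum_i \pi_k(r_i)}$, which depends only on the length and the total displacement.

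For the implication ``central $\Rightarrow$~\ref{(iv)}'': the paths $p_s$ and $p'_s$ produced in Proposition~\ref{prop:p1p2} may leave $\mathbb R_{\geq 0}^d$, but they are bounded in $\mathbb R^d$ since they have finitely many steps. Choosing $N\in\mathbb N$ larger than the magnitude of any coordinate visited, the translates of $p_s$ and $p'_s$ starting at $(N,\dots,N)$ both stay in $\mathbb R_{\geq 0}^d$, share the same length, and end at the same point. Definition~\ref{def:central} then forces these translates to carry equal weight, and since the multiset of steps along a path is invariant under translation, we recover \eqref{eq:prodas}.

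For the converse ``\ref{(iv)} $\Rightarrow$ central'': after taking logarithms, condition~\ref{(ii)} is equivalent to $\log\vec a := (\log a_s)_{s\in\mS}$ lying in the column span of the matrix $M_{\mS}$ from~\eqref{eq:MatrixS}, while each relation~\eqref{eq:prodas} becomes a linear constraint $\langle v^{(s)},\log\vec a\rangle = 0$ whose coefficient vector $v^{(s)}\in\mathbb R^{|\mS|}$ is read off from~\eqref{eq:linearcombi}: $v^{(s)}$ has entry $n_s$ at coordinate~$s$, entries $n_{s,t_j}$ and $-n'_{s,t'_j}$ at the coordinates of the $\mathcal T$-steps appearing in~\eqref{eq:linearcombi}, and zero elsewhere. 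Equation~\eqref{eq:linearcombi} is precisely the statement $M_{\mS}^{\top}v^{(s)}=0$, so each $v^{(s)}$ lies in $\ker M_{\mS}^{\top}$. Since only $v^{(s)}$ among the family $(v^{(r)})_{r\in\mS\setminus\mathcal T}$ has a nonzero (indeed positive) entry on coordinate~$s$, these $|\mS|-d-1$ vectors are linearly independent; combined with $\dim\ker M_{\mS}^{\top}=|\mS|-(d+1)$ supplied by Lemma~\ref{lem:rank}, they form a basis of $\ker M_{\mS}^{\top}$. Hence~\ref{(iv)} says exactly that $\log\vec a\perp\ker M_{\mS}^{\top}$, i.e.\ $\log\vec a\in\operatorname{Im}(M_{\mS})$, which is~\ref{(ii)}.

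The main obstacle is the spanning step in the converse: one must recognize that the triangular support of the vectors $v^{(s)}$ (each distinguished by a nonzero entry on coordinate $s\in\mS\setminus\mathcal T$) forces linear independence, and then invoke Lemma~\ref{lem:rank} to match their count with $\dim\ker M_{\mS}^{\top}$. The forward direction, by contrast, is routine once one notices that the only issue---paths possibly exiting the orthant---can be sidestepped by translating deep into the cone.
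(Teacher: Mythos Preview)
Your proof is correct and follows essentially the same strategy as the paper's. The paper organizes the argument as a cycle of implications through Definition~\ref{def:central}, \ref{(i)}, \ref{(iv)}, and \ref{(ii)}; your forward direction translates the start point to $(N,\dots,N)$ whereas the paper prepends a path from the origin reaching deep into the orthant (invoking non-singularity), but the substance is identical. For the converse, the paper shows $\operatorname{Im}(M_{\mS})=E$ by inclusion plus dimension count, while you prove the dual statement $\operatorname{span}\{v^{(s)}\}=\ker M_{\mS}^{\top}$ by the same independence-plus-rank argument; these are equivalent formulations of the same linear-algebraic fact.
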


\begin{example}
We continue the GB example with the aforementioned set $\mathcal T$, and paths $p_s,p_s'$;  Proposition~\ref{prop:productweights} gives
\[ a_{-1,1} \times a_{1,-1} = a_{1,0} \times a_{-1,0}. \]
In fact, this relation does not depend (after potential simplification) on the choice of $\mathcal T$, $p_s$, or $p_s'$.
\end{example}

We can now provide a combined proof of Theorem~\ref{thm:centralweighting} and Proposition~\ref{prop:productweights}.

\begin{proof}[Proof of Theorem~\ref{thm:centralweighting} and Proposition~\ref{prop:productweights}]\mbox{}
\begin{description}
\item{$\ref{(ii)} \boldsymbol \Leftrightarrow \ref{(iii)}$.}
This equivalence translates an identification of coefficients between polynomials.

\item{Definition~\ref{def:central} $ \boldsymbol \Rightarrow \ref{(i)}$.}
Obvious: we constrain the starting point to be the origin.

\item{$\ref{(i)} \boldsymbol \Rightarrow$ \ref{(iv)}.}
Since $\mS$ is non-singular, we can find a path starting from the origin and ending at a point arbitrarily far from both the $x$- and $y$-axes. Thus, we can pick some path $p$ using the steps $\mS$ such that for every $s \in \mS \setminus \mathcal T$ the concatenation of $p$ and $p_s$ and the concatenation of $p$ and $p'_s$ stay in $\mathbb R^d_{\geq0}$. The weighting being central, the weights of these two walks are equal, giving Equation~\eqref{eq:prodas} after some simplification.

\item{$\ref{(iv)} \boldsymbol \Rightarrow \ref{(ii)}$.}
Assume that Equation~\eqref{eq:prodas} holds. We will prove that the image of the matrix $M_{\mS}$ defined by Equation~\eqref{eq:MatrixS} is equal to the set
\begin{equation*}
E = \left\{ (y_s)_{s \in \mS} \ \left| \ \ \forall s \, \in \, \mS \setminus \mathcal T, \ \ \sum_{r \in p_s} y_r  = \sum_{r' \in p'_s} y_{r'} \textrm{ (considered with multiplicity)} \right. \right\}.
\end{equation*}
A vector $\left(y_s\right)_{s \in \mS}$ in $\textrm{Im}(M_{\mS})$ can be parametrized as
\[y_s = x_{d+1} + \sum_{i=1}^d \pi_i(s)x_i\] 
for $s \in \mS$ and indeterminates $x_i$. For any path $q$ the coefficient of $x_{d+1}$ in the sum $\sum_{r \in q} y_r$ is the length of $q$, while the coefficient of $x_k$ for $1 \leq k \leq d$ is the $k$th coordinate of the endpoint of $q$.  As the paths $p_s$ and $p'_s$ coincide at their endpoints and have the same length for every $s \in \mS \setminus \mathcal T$, we see that $(y_s)$ belongs to $E$. In other words, $\textrm{Im}\left(M_{\mS}\right) \subseteq E$. 

The equality $\textrm{Im}\left(M_{\mS}\right) = E$ follows from considering the dimensions of these linear spaces. On one hand, the dimension of $\textrm{Im}\left(M_{\mS}\right)$ is $d+1$ by Lemma~\ref{lem:rank}. On the other hand, the dimension of $E$ is also $d+1$ since $E$ is the intersection of $|S|-d-1$ disjoint hyperplanes: the hyperplanes of equations $\sum_{r \in p_s} y_r  = \sum_{r' \in p'_s} y_{r'}$ for $s  \in  \mS \setminus \mathcal T$. (These hyperplanes are disjoint because   $\sum_{r \in p_s} y_r$ is the only sum involving the coordinate $y_s$, as per the second condition of Proposition~\ref{prop:p1p2}.)  Therefore, $\textrm{Im}\left(M_{\mS}\right) = E$.  Applying logarithms to~\eqref{eq:prodas} shows that $(\log(a_s))_{s \in \mS}$ belongs to $E$, and thus to $\textrm{Im}\left(M_{\mS}\right)$, so that there exist constants $\alpha_1,\dots,\alpha_d$ and $\beta$ satisfying condition \ref{(ii)} of Theorem~\ref{thm:centralweighting}.

\item{$\ref{(ii)} \boldsymbol \Rightarrow$ Definition~\ref{def:central}.}
For $s \in \mS$ and a walk $w$, we denote by $n_s(w)$ the number of steps of type $s$ contained in $w$. The weight of a walk $w$ of length $n$ starting at $(i_1,\dots,i_d)$, ending at $(j_1,\dots,j_d)$ and staying in $\mathbb R_{\geq 0}^d$ is 
\[ \prod_{s \in \mS}a_s^{n_s(w)} = \beta^{\sum_{s \in \mS} n_s(w)} \prod_{k=1}^d \alpha_k^{\sum_{s \in \mS} \pi_k(s)\,n_s(w)} = \beta^{n} \prod_{k=1}^d \alpha_k^{j_k-i_k}. \]
This number does not depend on $w$, so the weighting is central.\qedhere
\end{description}
\end{proof}

\noindent \textit{Summarizing example.} 
We illustrate how these results allow one to efficiently characterize central weightings by considering the (non-small) step set $\mS = \{(2,2), (1,1), (-1,0), (0,-1)\}$. First, one finds a pair of paths satisfying Proposition~\ref{prop:p1p2}. In this case, we can take $p$ as the sequence of steps $(2,2),(2,2),(2,2),(-1,0),(0,-1)$, and $p'$ as the path formed of five times the step $(1,1)$. Thus, thanks to Proposition~\ref{prop:productweights}, \textit{all} central weightings satisfy
\begin{equation}\label{eq:geom}
     a_{-1,0}a_{2,2}^3a_{0,-1} = a_{1,1}^5.
\end{equation}
In Figure~\ref{fig:paths}, the left-hand side of this equation is encoded by a lattice path in black, while the right-hand side is encoded by a path in white.  

We can compute $\alpha_1,$ $\alpha_2$ and $\beta$  in terms of $a_{2,2}^2$, $a_{0,-1}$ and $a_{1,1}$ by solving the system
\[
\begin{pmatrix}
2 & 2 & 1 \\ 1 & 1 & 1 \\ -1 & 0 & 1 \\ 0 & -1 & 1
\end{pmatrix}
\,
\begin{pmatrix}
\log(\alpha_1) \\ \log(\alpha_2) \\ \log(\beta)
\end{pmatrix}
=
\begin{pmatrix}
\log(a_{2,2}) \\ \log(a_{1,1}) \\ \log(a_{-1,0}) \\ \log(a_{0,-1})
\end{pmatrix}
\]
We find $\alpha_1 = a_{2,2}^2\,a_{0,-1} / a_{1,1}^3$,  $\alpha_2 = a_{1,1}^2 /a_{2,2}\,a_{0,-1}$ and $\beta = a_{1,1}^2 /a_{2,2}$. 

\begin{figure} \center
\includegraphics[width=.32\textwidth]{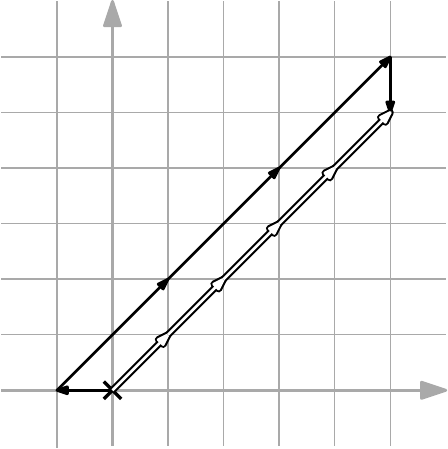} 
\caption{Given the step set $\mS = \{(2,2),(1,1),(-1,0),(0,1)\}$, the path in black and the path in white satisfy the conditions of Proposition~\ref{prop:p1p2}. Remark that they encode the left- and right-hand sides of Equation~\eqref{eq:geom}, respectively.}
\label{fig:paths}
\end{figure}

\subsection{Equivalence classes of weightings}
\label{sec:equiv}
Theorem \ref{thm:centralweighting} can be generalized to models where the original step set is weighted, enabling us to define equivalence classes among weighted models of walks.

Let  $W_{(i_1,\dots,i_d)\rightarrow(j_1,\dots,j_d)}(n)$ be the set of walks of length $n$ starting at $(i_1,\dots,i_d)$, ending at $(j_1,\dots,j_d)$ and staying in $\mathbb R_{\geq 0}^d$.  For $s \in \mS$ and a walk $w$, we denote by $n_s(w)$ the number of steps of type $s$ contained in $w$.
The probability of a given walk $w \in W_{(i_1,\dots,i_d)\rightarrow(j_1,\dots,j_d)}(n)$ among the walks of length $n$ is 
\[ {\prod_{s \in \mS}a_s^{n_s(w)}} \left/ \left({\sum_{w' \in  W_{(i_1,\dots,i_d)\rightarrow(j_1,\dots,j_d)}(n)} \prod_{s \in \mS}a_s^{n_s(w')} }  \right) \right. .\]

\begin{thm}
Let $\mS$ be a set of non-singular integer steps of dimension $d$. Consider two weightings $(a_s)_{s \in \mS}$ and $(a'_s)_{s \in \mS}$ on this step set. Then the following statements are equivalent:
\begin{enumerate}
\item\label{item(i):equivweight}
The probability of any path (starting at the origin, staying in $\mathbb R_{\geq 0}^d$, with fixed length) on the steps in $\mS$ is the same when weighted according to $(a_s)$ or $(a'_s)$;
\item\label{item(ii):equivweight}There exist constants $\alpha_1,\dots,\alpha_d$ and $\beta$ such that the weight assigned to each step $s \in \mS$ is of the form   $a_s = a'_s \, \beta \, \prod_{k=1}^d \alpha_k^{\pi_k(s)}$;
\item\label{item(iii):equivweight}There exist constants $\alpha_1,\dots,\alpha_d$ and $\beta$ such that
\[K_{\mathfrak a}(x_1,\dots,x_d;t) = K_{\mathfrak a'}(\alpha_1 x_1,\dots, \alpha_d x_d; \beta t),\]
where $K_{\mathfrak a}(x_1,\dots,x_d;t)$ denotes the kernel for the weighting $a_s$, and $K_{\mathfrak a'}(x_1,\dots,x_d;t)$  the kernel for the weighting $(a_s')$;
\item\label{item(iv):equivweight}If $(p_s,p'_s)_{s \in \mS \setminus \mathcal T}$ denote pairs of paths satisfying Proposition~\ref{prop:p1p2} then, for every $s \in \mS \setminus \mathcal T$,
\begin{equation}
     \prod_{r \in p_s} \frac{a_r}{a'_r} = \prod_{r' \in p'_s} \frac{a_{r'}}{a'_{r'}}, \label{eq:equivweight}
\end{equation}
where the steps are considered with multiplicity.
\end{enumerate}
Two weightings are said to be \emph{equivalent} if they satisfy one of the above statements.
\label{th:equivweight}
\end{thm}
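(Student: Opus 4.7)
The key observation is that this theorem reduces to Theorem~\ref{thm:centralweighting} (together with Proposition~\ref{prop:productweights}) applied to the \emph{ratio weighting} $b_s := a_s / a'_s$. My plan is to verify that each of the four statements \ref{item(i):equivweight}--\ref{item(iv):equivweight} in the present theorem translates, under this substitution, to the corresponding statement in Theorem~\ref{thm:centralweighting}/Proposition~\ref{prop:productweights} for $(b_s)_{s \in \mathcal S}$. The four equivalences then follow simultaneously from the work already carried out.

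Concretely, I would proceed as follows. First, the polynomial identity
\[ K_{\mathfrak a}(x_1,\dots,x_d;t) = K_{\mathfrak a'}(\alpha_1 x_1,\dots,\alpha_d x_d; \beta t) \]
is a monomial-by-monomial identification equivalent to $a_s = a'_s \, \beta \, \prod_k \alpha_k^{\pi_k(s)}$, giving \ref{item(ii):equivweight} $\Leftrightarrow$ \ref{item(iii):equivweight} exactly as in the proof of \ref{(ii)} $\Leftrightarrow$ \ref{(iii)} of Theorem~\ref{thm:centralweighting}. Next, dividing~\eqref{eq:equivweight} by its analog for $(a'_s)$ yields $\prod_{r \in p_s} b_r = \prod_{r' \in p'_s} b_{r'}$, which is Proposition~\ref{prop:productweights} for $(b_s)$; conversely condition \ref{(iv)} for $(b_s)$ is precisely~\eqref{eq:equivweight}, so \ref{item(iv):equivweight} of the new theorem is equivalent to condition \ref{(iv)} applied to the ratio weighting. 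Similarly, condition \ref{(ii)} of Theorem~\ref{thm:centralweighting} applied to $(b_s)$ reads $b_s = \beta \prod_k \alpha_k^{\pi_k(s)}$, which is exactly \ref{item(ii):equivweight} of the new theorem.

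The main obstacle is showing that \ref{item(i):equivweight} matches condition \ref{(i)} of Theorem~\ref{thm:centralweighting} for $(b_s)$. For a walk $w \in W_{(i_1,\dots,i_d) \to (j_1,\dots,j_d)}(n)$, write $W(w;\mathfrak a) := \prod_{s \in \mathcal S} a_s^{n_s(w)}$ and $Z(\mathfrak a) := \sum_{w' \in W_{(i_1,\dots,i_d) \to (j_1,\dots,j_d)}(n)} W(w';\mathfrak a)$. The probability of $w$ under $(a_s)$ equals that under $(a'_s)$ exactly when
\[ \frac{W(w;\mathfrak a)}{W(w;\mathfrak a')} = \frac{Z(\mathfrak a)}{Z(\mathfrak a')}, \]
whose right-hand side depends only on the endpoints and length. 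Since $W(w;\mathfrak a)/W(w;\mathfrak a') = \prod_s (a_s/a'_s)^{n_s(w)} = W(w;\mathfrak b)$, this is precisely the statement that $(b_s)$ is a central weighting on walks starting at the origin, i.e., condition \ref{(i)} of Theorem~\ref{thm:centralweighting} for $(b_s)$. (The case of arbitrary starting points follows automatically by that same theorem.)

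With these four translations in hand, the equivalence of \ref{item(i):equivweight}--\ref{item(iv):equivweight} reduces to the equivalence of conditions \ref{(i)}, \ref{(ii)}, \ref{(iii)} and \ref{(iv)} of Theorem~\ref{thm:centralweighting} and Proposition~\ref{prop:productweights} for the weighting $(b_s)$, which is already established. The only delicate point beyond bookkeeping is the algebraic manipulation showing equal conditional probabilities corresponds to a constant ratio of weights; everything else is a direct substitution.
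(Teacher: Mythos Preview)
Your argument is correct. The paper itself does not spell out a proof of this theorem, only remarking that ``the proof of the previous theorem is analogous to the proofs discussed above,'' i.e.\ that one should rerun the cycle of implications from Theorem~\ref{thm:centralweighting} and Proposition~\ref{prop:productweights} with two weightings instead of one. Your approach is a clean variant: rather than redo those implications, you introduce the ratio weighting $b_s = a_s/a'_s$ and show that each of \ref{item(i):equivweight}--\ref{item(iv):equivweight} translates verbatim into the corresponding condition of Theorem~\ref{thm:centralweighting}/Proposition~\ref{prop:productweights} for $(b_s)$, then invoke the already-proven equivalences. This is arguably tidier, since it makes precise the sense in which the two theorems are the ``same'' result, and it avoids repeating the linear-algebra argument behind \ref{(iv)}$\Rightarrow$\ref{(ii)}.

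Two small remarks on presentation. First, your sentence about ``dividing~\eqref{eq:equivweight} by its analog for $(a'_s)$'' is misphrased: Equation~\eqref{eq:equivweight} is already written in terms of the ratios $a_r/a'_r = b_r$, so no division is needed---it \emph{is} condition~\ref{(iv)} for $(b_s)$. Second, your verification that \ref{item(i):equivweight} corresponds to centrality of $(b_s)$ relies on the paper's definition of ``probability of a walk'' as conditional on both length \emph{and} endpoint (the denominator $Z(\mathfrak a)$ sums over walks with fixed endpoint); you use this correctly, but it would be worth stating explicitly, since condition \ref{item(i):equivweight} as written only mentions fixed length.
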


Thus, a central weighting is a weighting that is equivalent to the unweighted model $(1)_{s \in \mS}$ (note that condition \ref{item(i):equivweight} is equivalent to Definition~\ref{def:central} because every walk of length $n$ has the same probability of occurring in the unweighted case).  The proof of the previous theorem is analogous to the proofs discussed above.  Furthermore, the theorems in the next subsection can be easily generalized to pairs of equivalent (not necessarily central) weightings.

\subsection{Generating function relations}
One of our motivations for central weightings is that they induce very simple relations between the generating functions of a centrally weighted model and an unweighted model. Generalizing from the bivariate case, we define 
\begin{equation}
Q(x_1,\dots,x_d;t) := \sum_{\substack{w \textrm{ walk starting at }(0,\dots,0)\\\textrm{ending at }(i_1,\dots,i_d) \textrm{ with }n\textrm{ steps} \\ \textrm{staying in }\mathbb{N}^d}} x_1^{i_1}\dots x_d^{i_d}t^n
\end{equation}
and
\begin{equation}
Q_{\mathfrak a}(x_1,\dots,x_d;t) := \sum_{\substack{w \textrm{ walk starting at }(0,\dots,0)\\\textrm{ending at }(i_1,\dots,i_d) \textrm{ with }n\textrm{ steps} \\ \textrm{staying in }\mathbb{N}^d}} \, \left( \prod_{s \in \mathcal{S}} a_s^{n_s(w)} \right)x_1^{i_1}\dots x_d^{i_d}t^n,
\end{equation}
where $n_s(w)$ denotes the number of steps $s$ used by the walk $w$.

\begin{proposition} 
Let $Q_{\mathfrak a}(x_1,\dots,x_d;t)$ be the generating function of a model under a central weighting $a_s = \beta \prod_{k=1}^d \alpha_k^{\pi_k(s)}$, and $Q(x_1,\dots,x_d;t)$ the generating function of unweighted walks with the same set of steps. Then 
\begin{equation}
Q_{\mathfrak a}(x_1,\dots,x_d;t) = Q(\alpha_1 x_1,\dots, \alpha_d x_d; \beta t).
\label{eq:Qm=Q}
\end{equation}
\label{prop:Qm=Q}
\end{proposition}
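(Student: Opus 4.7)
The plan is to use Condition~\ref{(ii)} of Theorem~\ref{thm:centralweighting} directly: substitute the product form $a_s = \beta \prod_{k=1}^d \alpha_k^{\pi_k(s)}$ into the definition of $Q_{\mathfrak a}$ and regroup the exponents. The computation was essentially already carried out in the proof of the implication $\ref{(ii)} \Rightarrow$ Definition~\ref{def:central} above, so the work here is to repackage it at the level of generating functions.

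Concretely, fix a walk $w$ of length $n$ starting at the origin, ending at $(i_1,\dots,i_d)$, and staying in $\mathbb{R}_{\geq 0}^d$. The first step is to observe the two elementary combinatorial identities
\[
\sum_{s \in \mS} n_s(w) = n, \qquad \sum_{s \in \mS} \pi_k(s)\, n_s(w) = i_k \quad (1 \leq k \leq d),
\]
the first recording the length and the second recording each coordinate of the endpoint. Using these, the weight of $w$ becomes
\[
\prod_{s \in \mS} a_s^{n_s(w)} = \beta^{\sum_s n_s(w)} \prod_{k=1}^d \alpha_k^{\sum_s \pi_k(s)\, n_s(w)} = \beta^n \prod_{k=1}^d \alpha_k^{i_k},
\]
which depends only on $n$ and the endpoint, and not on $w$ itself (this is, of course, just the central property once more).

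Substituting this into the definition of $Q_{\mathfrak a}(x_1,\dots,x_d;t)$ gives
\[
Q_{\mathfrak a}(x_1,\dots,x_d;t) = \sum_{w} \beta^n \prod_{k=1}^d \alpha_k^{i_k} \prod_{k=1}^d x_k^{i_k}\, t^n = \sum_{w} \prod_{k=1}^d (\alpha_k x_k)^{i_k} (\beta t)^n,
\]
where the sum ranges over the same set of walks as in the definition of $Q$. Recognising the right-hand side as $Q(\alpha_1 x_1,\dots,\alpha_d x_d;\beta t)$ completes the proof. There is no real obstacle — the identity is essentially a bookkeeping consequence of the factorisation of $a_s$; the only thing to verify is that no combinatorial data beyond the length and endpoint is used, which is exactly what the centrality hypothesis guarantees.
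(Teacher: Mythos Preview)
Your proof is correct and follows essentially the same approach as the paper: both compute the weight of a walk using the product form $a_s = \beta\prod_k \alpha_k^{\pi_k(s)}$ together with the identities $\sum_s n_s(w)=n$ and $\sum_s \pi_k(s)\,n_s(w)=i_k$, then recognise the resulting sum as the unweighted generating function evaluated at rescaled variables. The only cosmetic difference is that the paper substitutes $x_k\mapsto x_k/\alpha_k$, $t\mapsto t/\beta$ into $Q_{\mathfrak a}$ and matches coefficients with $Q$, whereas you expand $Q_{\mathfrak a}$ directly; the computations are the same.
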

\begin{proof}
Let $(i_1,\dots,i_d,n)$ be a $(d+1)$-tuple of non-negative integers. Extracting the coefficient of $x_1^{i_1}\dots x_d^{i_d}t^n$ in the series $Q_{\mathfrak a}(x_1/\alpha_1,\dots,x_d/\alpha_d;t/\beta)$ gives
\[\sum_{\substack{w\textrm{ walk ending}\\ \textrm{at }(i_1,\dots,i_d)\textrm{ with }n\textrm{ steps}}} \left( \prod_{s \in \mathcal{S}} a_s^{n_s(w)} \right) \alpha_1^{-i_1}\dots\alpha_d^{-i_d}\beta^{-n}.\]
But we know that $n = \sum_{s \in \mS} n_s(w)$ and that $i_k = \sum_{s \in \mS} \pi_k(s)$ for every $k \in \left\{1,\dots,d\right\}$. Hence, by reorganizing the factors, we observe that this coefficient is equal to
\begin{equation*}
     \sum_{\substack{w\textrm{ walk ending}\\ \textrm{at }(i_1,\dots,i_d)\textrm{ with }n\textrm{ steps}}} \prod_{s \in \mathcal{S}} \left(a_{s} \alpha_1^{-\pi_1(s)}\alpha_2^{-\pi_2(s)}\dots\alpha_{d}^{-\pi_{d}(s)}\beta^{-1}\right)^{n_s(w)}.
\end{equation*}
If the weights $a_s$ form a central weighting, then this is equal to the number of walks ending at $(i_1,\dots,i_d)$ with $n$ steps, which is the coefficient of $x_1^{i_1}\dots x_d^{i_d}t^n$ in  $Q_{\mathfrak a}(x_1,\dots,x_d;t)$.
\end{proof}

From this quick observation, and the closure of D-finite functions under algebraic substitution, we obtain the following result.

\begin{cor}\label{thm:bothornone}
The multivariate generating functions for an unweighted model and any weighted version with a rational central weighting are either both D-finite or both non-D-finite. 
\end{cor}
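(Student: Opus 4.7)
The plan is to derive the corollary as a direct consequence of Proposition~\ref{prop:Qm=Q} together with the standard closure of the class of D-finite power series under algebraic substitutions. By Theorem~\ref{thm:centralweighting}\ref{(ii)}, a central weighting is determined by constants $\alpha_1,\dots,\alpha_d,\beta$, and if the individual weights $a_s$ are rational then (after possibly taking roots to solve the linear system on the logarithms) the $\alpha_i$ and $\beta$ are algebraic numbers; in any case the substitution $(x_1,\dots,x_d,t)\mapsto(\alpha_1 x_1,\dots,\alpha_d x_d,\beta t)$ is an algebraic substitution with nonzero coefficients.

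First I would invoke Proposition~\ref{prop:Qm=Q} to write
\[
Q_{\mathfrak a}(x_1,\dots,x_d;t) \;=\; Q(\alpha_1 x_1,\dots,\alpha_d x_d;\beta t).
\]
Assuming $Q(x_1,\dots,x_d;t)$ is D-finite, the multivariate D-finite closure theorem (Lipshitz's theorem) gives that any algebraic substitution into the variables of a D-finite power series produces a D-finite power series, provided the substituted series remains well-defined as a formal power series. Here the substitution is just rescaling the variables by nonzero constants, so $Q_{\mathfrak a}$ is a well-defined element of $\mathbb{Q}(\alpha_1,\dots,\alpha_d,\beta)[[x_1,\dots,x_d,t]]$ and is therefore D-finite.

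For the converse, I would observe that since all $\alpha_i$ and $\beta$ are nonzero, the substitution is invertible: applying the inverse rescaling to the identity in Proposition~\ref{prop:Qm=Q} gives
\[
Q(x_1,\dots,x_d;t) \;=\; Q_{\mathfrak a}(\alpha_1^{-1} x_1,\dots,\alpha_d^{-1} x_d;\beta^{-1} t),
\]
so the same closure result yields that if $Q_{\mathfrak a}$ is D-finite then so is $Q$. Contrapositively, if either fails to be D-finite, so does the other.

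There is no real obstacle; the only point to be careful about is the formulation of the D-finite closure theorem in several variables (so that it applies to the whole multivariate series and not merely to a univariate specialization), and the fact that one must allow the coefficient field to be enlarged to contain $\alpha_1,\dots,\alpha_d,\beta$—this does not affect D-finiteness since the property is preserved under extension of the base field of constants.
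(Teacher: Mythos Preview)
Your proof is correct and follows essentially the same approach as the paper, which simply invokes Proposition~\ref{prop:Qm=Q} and the closure of D-finite functions under algebraic substitution. You have supplied the details the paper leaves implicit (invertibility of the rescaling for the converse direction, and the remark on enlarging the coefficient field), but the strategy is identical.
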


It is interesting to compare this corollary and point \ref{item(iv):equivweight} of Theorem~\ref{th:equivweight} with the works of Kauers and Yatchak~\cite{KaYa15}. Indeed, the models which they show to have finite group (and D-finite generating function) obey relations of the form appearing in Equation~\eqref{eq:equivweight}. This means that the families of walks described in their paper actually correspond to equivalence classes associated to fixed models, with the exception of their Family $0$ which contains an uncountable number of equivalence classes. The representatives of the families of walks appearing in Kauers and Yatchak~\cite{KaYa15} are shown in Figure~\ref{fig:kaya}.

\begin{figure}\center
\includegraphics[width=.9\textwidth]{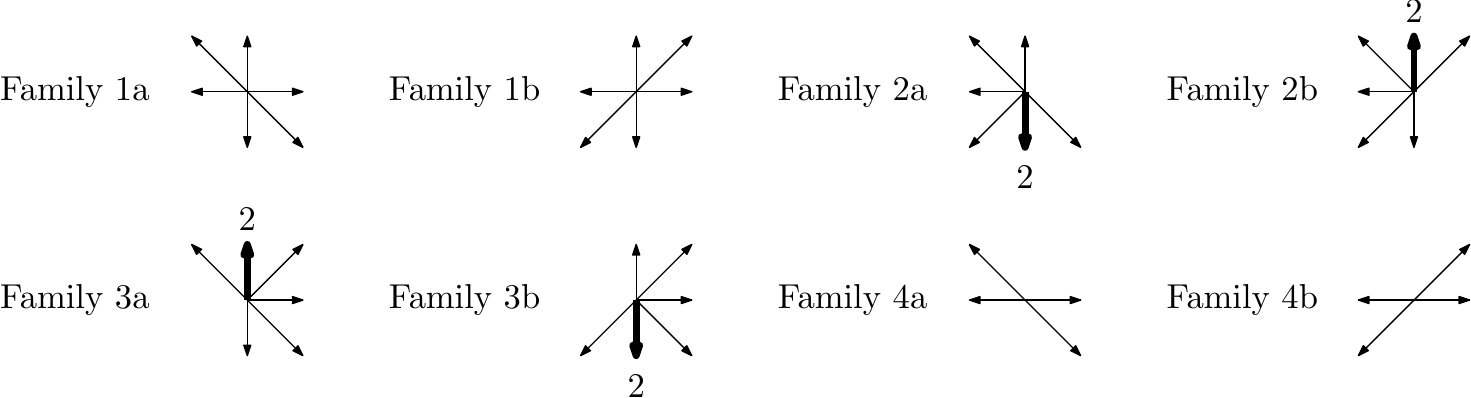} 
\caption{Equivalence class representatives corresponding to the families of walks with D-finite generating functions given by Kauers and Yatchak~\cite{KaYa15}; unlabeled steps have weight $1$. }
\label{fig:kaya}
\end{figure}

\subsection{Excursions}
\label{ss:excursions}

We can also easily deduce from Proposition~\ref{prop:Qm=Q} the following property on excursions, which shows in particular that the subexponential behaviours for the number of excursions for central weightings must all coincide.

\begin{cor} 
\label{thm:excursions}
Given a central weighting of the form $a_s = \beta \prod_{k=1}^d \alpha_k^{\pi_k(s)}$, the number of excursions $e_{\mathfrak a}(n)$ in the weighted model is related to the number of excursions $e(n)$ in the unweighted model by the relation
\[e_{\mathfrak a}(n) = \beta^n\,e(n).\]
\end{cor}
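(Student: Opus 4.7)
The plan is to derive this as an immediate specialization of Proposition~\ref{prop:Qm=Q}, since the excursion generating function is obtained from the multivariate generating function by setting all spatial variables to zero. Specifically, I would observe that an excursion of length $n$ is a walk starting at the origin, ending at $(0,\dots,0)$, and staying in $\mathbb{R}^d_{\geq 0}$, so
\[ \sum_{n\geq 0} e(n)\, t^n = Q(0,\dots,0;t) \qquad \textrm{and} \qquad \sum_{n\geq 0} e_{\mathfrak a}(n)\, t^n = Q_{\mathfrak a}(0,\dots,0;t). \]
Specializing the identity $Q_{\mathfrak a}(x_1,\dots,x_d;t) = Q(\alpha_1 x_1,\dots,\alpha_d x_d;\beta t)$ from Proposition~\ref{prop:Qm=Q} at $x_1 = \cdots = x_d = 0$ then gives $Q_{\mathfrak a}(0,\dots,0;t) = Q(0,\dots,0;\beta t) = \sum_{n\geq 0} \beta^n e(n)\, t^n$, and comparing coefficients of $t^n$ yields the claim.

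Alternatively (and perhaps more illuminatingly), one can verify the statement by a direct combinatorial argument that mirrors the last step of the proof of Theorem~\ref{thm:centralweighting}. Given an excursion $w$ of length $n$, its weight is
\[ \prod_{s \in \mS} a_s^{n_s(w)} = \beta^{\sum_{s\in\mS} n_s(w)} \prod_{k=1}^d \alpha_k^{\sum_{s \in \mS} \pi_k(s)\, n_s(w)} = \beta^n, \]
where the last equality uses $\sum_{s} n_s(w) = n$ together with the fact that each coordinate of the endpoint, namely $\sum_s \pi_k(s)\, n_s(w)$, is zero for an excursion. Summing over the $e(n)$ unweighted excursions of length $n$ gives $e_{\mathfrak a}(n) = \beta^n e(n)$.

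There is no real obstacle here: the content lies entirely in Proposition~\ref{prop:Qm=Q}, whose substitution of variables absorbs the $\alpha_k$ factors into the spatial indeterminates (and these vanish for excursions) while the $\beta$ factor reparametrizes $t$. The corollary records the clean consequence that all central weightings of a given step set share the same subexponential behaviour for excursion counts, differing only by a trivial exponential factor $\beta^n$.
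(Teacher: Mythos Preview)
Your proof is correct and matches the paper's own argument essentially verbatim: the paper also notes that the excursion generating function is obtained by setting all spatial variables to zero and then invokes the generating function identity (your Proposition~\ref{prop:Qm=Q}). Your additional direct combinatorial verification is a nice bonus but not needed.
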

\begin{proof} 
The generating function of excursions is obtained from the generating function of general walks by setting $x_1=\dots=x_{d}=0$. The corollary can then be deduced from Theorem~\ref{thm:centralweighting}.
\end{proof}

\subsection{Conjectures about the converse}
\subsubsection{Statement of the conjectures}

It is natural to ask if the converse of Proposition~\ref{prop:Qm=Q} is also true: are the central weightings the only weightings that satisfy Equation~\eqref{eq:Qm=Q}? This is not clear, and we formulate the following conjecture.

\begin{conjecture} 
\label{conj1}
Let $\mS$ be any step set and ${\mathfrak a}$ be any positive weighting.  If there exist constants $\alpha_1,\dots,\alpha_d,\beta$ such that
\[ Q_{\mathfrak a}(x_1,\dots,x_d;t) = Q(\alpha_1 x_1,\dots, \alpha_d x_d; \beta t), \]
then the weighting ${\mathfrak a}$ is central and $a_s = \beta \, \prod_{k=1}^d \alpha_k^{\pi_k(s)}$.
\end{conjecture}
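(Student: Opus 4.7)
My first move is a change of variables: set $\tilde a_s := a_s/\bigl(\beta\prod_{k=1}^d\alpha_k^{\pi_k(s)}\bigr)$. Because any walk $w$ from the origin staying in the cone satisfies $\sum_s n_s(w) = |w|$ and $\sum_s n_s(w)\,\pi_k(s)$ equals the $k$th coordinate of the endpoint of $w$, the weight $\prod_s a_s^{n_s(w)}$ factors as $\beta^{|w|}\prod_k\alpha_k^{\mathrm{endpoint}_k}\prod_s\tilde a_s^{n_s(w)}$. Extracting the coefficient of $x_1^{i_1}\cdots x_d^{i_d}t^n$ on each side, the hypothesis is equivalent to the equality of formal power series $Q_{\tilde{\mathfrak a}}(\mathbf x;t) = Q(\mathbf x;t)$. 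Since $\tilde a_s\equiv 1$ corresponds precisely to the desired central form $a_s = \beta\prod_k\alpha_k^{\pi_k(s)}$, the conjecture reduces to the following: \emph{any positive weighting $\tilde{\mathfrak a}$ satisfying $Q_{\tilde{\mathfrak a}} = Q$ must have $\tilde a_s = 1$ for every $s\in\mS$.}

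The approach I would take exploits the kernel equation. Both $Q$ and $Q_{\tilde{\mathfrak a}}$ satisfy equations of the shape $K\cdot Q = (\text{initial term}) + (\text{boundary terms})$, where each boundary term is a product of a specialization of the kernel to a coordinate face $x_k=0$ with the corresponding specialization of $Q$; see equation~\eqref{eq:kernel} for the small-step 2D case. Under the hypothesis, the relevant specializations of $Q$ and $Q_{\tilde{\mathfrak a}}$ also coincide, so subtracting the two kernel equations yields
\[
\bigl(K_{\tilde{\mathfrak a}}-K\bigr)(\mathbf x;t)\,Q(\mathbf x;t) \;=\; \sum_{\text{faces }F}\bigl(K_{\tilde{\mathfrak a}}-K\bigr)\big|_F \cdot Q\big|_F,
\]
whose right-hand side is supported on the coordinate hyperplanes. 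In the small-step case $K_{\tilde{\mathfrak a}} - K = t\prod_k x_k\cdot\sum_{s\in\mS}(1-\tilde a_s)\mathbf x^{\pi(s)}$, so extracting the coefficient of a monomial $x_1^{i_1}\cdots x_d^{i_d}t^n$ with every $i_k \geq 1$ annihilates the right-hand side and produces the linear relation
\[
\sum_{s\in\mS}(1-\tilde a_s)\,W_n\bigl(\mathbf i - \pi(s) - \mathbf 1\bigr) \;=\; 0,
\]
where $W_n(\mathbf v)$ denotes the number of unweighted length-$n$ walks from the origin to $\mathbf v$ in the cone (and vanishes for $\mathbf v\notin\mathbb N^d$).

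The remaining task is a rank argument: as $(\mathbf i,n)$ ranges over $\mathbb N_{>0}^d\times\mathbb N$, the vectors $\bigl(W_n(\mathbf i - \pi(s) - \mathbf 1)\bigr)_{s\in\mS}\in\mathbb R^{|\mS|}$ must span $\mathbb R^{|\mS|}$, which would force $\tilde a_s = 1$ for every $s$. This linear-independence claim is the main obstacle, and is where the non-singularity of $\mS$ should enter — non-singularity guarantees that walks populate enough of the cone's interior to separate distinct steps. A concrete strategy is, for each prescribed step $s_\star$, to pick $\mathbf i$ so that $\mathbf i - \pi(s_\star) - \mathbf 1 \in \mathbb N^d$ while as many of the other shifts $\mathbf i - \pi(s) - \mathbf 1$ as possible fall outside $\mathbb N^d$, and then to exploit the known asymptotics of $W_n$ — such as those derived in the first part of this paper or the general limit theorems of Denisov and Wachtel~\cite{DeWa15} — to isolate the coefficient $(1-\tilde a_{s_\star})$ from the rest of the sum. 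For non-small step sets the kernel equation produces extra boundary contributions on non-coordinate subsets and the cancellation step needs to be refined, but the overall reduction of the conjecture to a linear-independence claim about coefficients of the unweighted generating function should remain the guiding principle.
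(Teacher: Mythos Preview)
The statement you are attempting is labeled a \emph{Conjecture} in the paper, not a theorem: the paper does not prove it. Your reduction is essentially identical to the paper's. After the change of variables you make, the paper formulates the resulting linear-independence claim on the unweighted walk counts as a separate Conjecture~\ref{conj2} (with $\mu_s$ playing the role of your $1-\tilde a_s$), and proves only that Conjecture~\ref{conj2} implies Conjecture~\ref{conj1}. The paper obtains the relation $\sum_s \mu_s\, w_{\mathbf i - \pi(s)}(n-1)=0$ by substituting the walk recurrence directly into the hypothesis, while you route through the kernel equation; these are the same computation.

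The gap in your proposal is exactly the gap the paper leaves open: the rank argument you flag as ``the remaining task'' and ``the main obstacle'' is not carried out. The paper establishes it only in the special case $(1,\dots,1)\in\mS$ (Proposition~\ref{prop:111inS}), via an explicit triangular induction on $\sum_k\pi_k(s)$, and otherwise relies on finite computer verification for the 2D small-step models. Your proposed strategy---choose $\mathbf i$ to isolate one step and invoke Denisov--Wachtel asymptotics---is heuristic at best: those limit theorems describe $W_n$ at targets scaling with $n$, not the fine behaviour needed to separate finitely many fixed shifts, and give no mechanism forcing full rank for arbitrary non-singular $\mS$. The paper's discussion of the minimal depth $N_\mS$ (Section~5.5.2), in particular the symmetric model $\mS_\#$ where $N_{\mS_\#}$ exceeds the naive bound, suggests that a uniform argument of the kind you sketch would require a genuinely new idea.
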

In the particular case of random walks coming from root systems of Lie algebras, Conjecture~\ref{conj1} was proven by Lecouvey and Tarrago~\cite{LeTa16}.

Conjecture~\ref{conj1} would imply that there is only one function $K$ satisfying the kernel equation for any model. In dimension $d=2$ this would imply that the kernel equation
\[\widetilde K(x,y;t)Q(x,y;t)=xy + \widetilde K(x,0;t)Q(x,0;t) + \widetilde K(0,y;t)Q(0,y;t) - \widetilde K(0,0;t)Q(0,0;t)\]
admits a unique solution $\widetilde K = K$, demonstrating an inherent connection between kernels and generating functions of lattice path models. 

The conjecture can be reformulated purely in terms of the unweighted model.
\begin{conjecture} 
Let $w_{i_1,\dots,i_d}(n)$ be the number of (unweighted) walks with $n$ steps starting at the origin, ending at $(i_1,\dots,i_d)$ and staying in $\mathbb{N}^d$. Assume that there exists a family of constants $(\mu_s)_{s \in \mS}$ such that for every $(d+1)$-tuple $(i_1,\dots,i_d,n)$ the relation
\begin{equation}
\sum_{s \in \mathcal{S}} \mu_s w_{i_1-\pi_1(s),\dots,i_d-\pi_d(s)}(n-1) = 0 \label{eq:conj2}
\end{equation}
holds. Then for every $s \in \mS$, $\mu_s=0$. 
\label{conj2}
\end{conjecture}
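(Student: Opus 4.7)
The plan is to translate the hypothesis into a single identity on the multivariate generating function $Q(x_1,\ldots,x_d;t)$ and then deduce that all $\mu_s$ vanish. Define
\[
M(x_1,\ldots,x_d) := \sum_{s \in \mS} \mu_s \prod_{k=1}^d x_k^{\pi_k(s)} \in \mathbb{Q}\bigl[x_1^{\pm 1},\ldots,x_d^{\pm 1}\bigr].
\]
Multiplying the assumed relation by $x_1^{i_1}\cdots x_d^{i_d}\, t^{n-1}$ and summing over $(i,n) \in \mathbb{N}^d \times \mathbb{N}_{>0}$, one uses the vanishing of $w_j(m)$ off $\mathbb{N}^d \times \mathbb{N}$ to reorganize the double sum as
\[
\bigl[x^{\geq 0}\bigr]\bigl(M(x)\cdot Q(x_1,\ldots,x_d;t)\bigr) = 0,
\]
where $[x^{\geq 0}]$ is the formal operator that extracts the part of a Laurent series in $x$ supported on $\mathbb{N}^d$.

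The ``easy'' half follows already from this reformulation. Setting $q_j(t) := \sum_n w_j(n)\, t^n$, the identity reads $\sum_{s \in \mS} \mu_s q_{v-s}(t) = 0$ for every $v \in \mathbb{N}^d$, with the convention $q_j \equiv 0$ when $j \notin \mathbb{N}^d$. Extracting the $t^0$-coefficient and using $w_j(0) = [j=0]$ forces $\mu_v\,[v \in \mS] = 0$, hence $\mu_s = 0$ for every $s \in \mS \cap \mathbb{N}^d$. One may therefore assume from here on that $M$ is supported on steps $s$ having at least one negative coordinate. I would then proceed by induction on the ``negativity'' $h(s) := \sum_k \max(-s_k,0)$: pick a step $s^*$ of maximal $h$, choose $v = (s^*)^+ \in \mathbb{N}^d$ (the coordinate-wise positive part of $s^*$), and read off low-order $t$-coefficients of $\sum_s \mu_s q_{v-s}(t) = 0$. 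After discounting steps of smaller $h$ via the inductive hypothesis, one isolates $\mu_{s^*}$ multiplied by a finite sum of short-walk counts; the combinatorial content of Proposition~\ref{prop:p1p2} (which builds walks of arbitrary target displacement from non-singular step sets) should ensure this coefficient is strictly positive, thereby forcing $\mu_{s^*} = 0$.

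The main obstacle lies in making this induction watertight in complete generality: the ``leading'' coefficient multiplying $\mu_{s^*}$ could a priori cancel against contributions from other steps of equal height, and one must choose $v$ carefully (possibly iteratively over several $v$ for a fixed $s^*$) to avoid such cancellations. A robust alternative that I would run in parallel is the asymptotic route: by Denisov--Wachtel~\cite{DeWa15} and Garbit--Raschel~\cite{GaRa16}, one has $w_j(n) \sim C\,\rho^n\, n^{-\alpha}\, V(j)$ as $n \to \infty$, with $V$ the strictly positive discrete $\rho$-harmonic function associated with the walk. Substituting into the hypothesis and dividing by $\rho^{n-1}(n-1)^{-\alpha}$ yields the harmonic relation $\sum_s \mu_s V(v-s) = 0$ for every $v \in \mathbb{N}^d$. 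Combined with the defining equation $\rho V(v) = \sum_s V(v-s)$ satisfied by $V$, this gives an over-determined linear system whose only solution should be $M \equiv 0$, essentially reducing the conjecture to uniqueness of positive harmonic functions in cones. The remaining subtleties are the parity phenomena highlighted in Theorem~\ref{thm:main_asymptotic_result} (which may force a split of the series by parity of $n$) and the treatment of step sets for which the asymptotics of $w_j(n)$ are still conjectural, where a hybrid of the two approaches is likely necessary.
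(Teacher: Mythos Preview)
The statement you are trying to prove is Conjecture~\ref{conj2}, and the paper does \emph{not} prove it: it is explicitly left open. The only case settled in the paper is Proposition~\ref{prop:111inS}, which assumes the extra hypothesis $(1,1,\dots,1)\in\mS$ and proceeds by an induction on the coordinate sum $\sum_k\pi_k(s)$, using at each stage the walk of length~$2$ consisting of the candidate step $s_0$ followed by $(1,\dots,1)$. Your ``easy half'' (the case $s\in\mS\cap\mathbb N^d$, obtained from $n=1$) is correct and is implicitly the base case of that argument, but beyond this point your proposal does not close, and for reasons you yourself partly flag.

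Concretely, there are two genuine gaps. First, in the inductive route you pick $s^*$ of maximal negativity $h(s^*)$ and set $v=(s^*)^+$, but nothing prevents several steps $s$ with the same $h$-value from contributing to the same low-order coefficient of $\sum_s\mu_s q_{v-s}(t)$; your appeal to Proposition~\ref{prop:p1p2} is misplaced, since that proposition produces \emph{unrestricted} paths in $\mathbb Z^d$ matching a given displacement and says nothing about the positivity of quadrant-confined walk counts $w_{v-s}(m)$ needed to make the coefficient of $\mu_{s^*}$ nonzero. This is exactly the obstruction that keeps Conjecture~\ref{conj2} open; the paper's discussion of $N_{\mS}$ and the model $\mS_\#$ shows that the bookkeeping can be delicate even for highly symmetric step sets. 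Second, the asymptotic route via Denisov--Wachtel requires the walk to have zero drift, which is essentially never the case for the \emph{unweighted} model (the drift is $\sum_{s\in\mS}s$); outside zero drift the needed local limit theorems are not available in this generality, and even granting them you would still need a uniqueness statement for positive discrete harmonic functions in the cone, which the paper explicitly notes is false (there are infinitely many, cf.\ the remark following Equation~\eqref{eq:harmonicity_V}). So neither branch of your strategy yields a proof, and the statement remains a conjecture.
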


\begin{proposition} 
Conjecture \ref{conj2} implies Conjecture \ref{conj1}.
\end{proposition}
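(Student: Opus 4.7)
The plan is to introduce the \emph{candidate} central weighting $b_s := \beta \prod_{k=1}^d \alpha_k^{\pi_k(s)}$ and to show that $a_s = b_s$ for every $s \in \mS$; this will simultaneously identify $a_s$ with the required form and establish that $\mathfrak a$ is central via Condition~\ref{(ii)} of Theorem~\ref{thm:centralweighting}. By Proposition~\ref{prop:Qm=Q} applied to the central weighting $\mathfrak b = (b_s)_{s\in\mS}$, we have
\[ Q_{\mathfrak b}(x_1,\dots,x_d;t) = Q(\alpha_1 x_1,\dots,\alpha_d x_d;\beta t), \]
which by the hypothesis of Conjecture~\ref{conj1} coincides with $Q_{\mathfrak a}(x_1,\dots,x_d;t)$. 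Extracting coefficients therefore yields $w^{\mathfrak a}_{\bi}(n) = w^{\mathfrak b}_{\bi}(n)$ for every $\bi \in \mathbb N^d$ and $n \geq 0$, where $w^{\mathfrak a}_{\bi}(n)$ and $w^{\mathfrak b}_{\bi}(n)$ denote the total $\mathfrak a$- and $\mathfrak b$-weight respectively of walks of length $n$ from the origin to $\bi$ staying in $\mathbb N^d$.

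The second step uses the standard one-step recurrence: for every $\bi \in \mathbb N^d$ and $n \geq 1$,
\[ w^{\mathfrak a}_{\bi}(n) \;=\; \sum_{s \in \mS} a_s\, w^{\mathfrak a}_{\bi - s}(n-1), \]
with the convention $w^{\mathfrak a}_{\bj}(m) = 0$ for $\bj \notin \mathbb N^d$, and identically with $\mathfrak b$ in place of $\mathfrak a$. Subtracting the two recurrences and using $w^{\mathfrak a} \equiv w^{\mathfrak b}$ gives
\[ \sum_{s \in \mS} (a_s - b_s)\, w^{\mathfrak a}_{\bi - s}(n-1) \;=\; 0 \qquad \text{for all } \bi \in \mathbb N^d,\ n \geq 1. \]
Since $\mathfrak b$ is central, the implication \ref{(ii)} $\Rightarrow$ Definition~\ref{def:central} in the proof of Theorem~\ref{thm:centralweighting} shows that every walk of length $m$ ending at $\bj \in \mathbb N^d$ has $\mathfrak b$-weight $\beta^m \prod_k \alpha_k^{j_k}$, so
\[ w^{\mathfrak a}_{\bj}(m) \;=\; w^{\mathfrak b}_{\bj}(m) \;=\; \beta^m \prod_{k=1}^d \alpha_k^{j_k}\, w_{\bj}(m), \]
where $w_{\bj}(m)$ is the unweighted count appearing in Conjecture~\ref{conj2}.

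Plugging this expression into the displayed identity and factoring out the nonzero quantity $\beta^{n-1} \prod_k \alpha_k^{i_k}$ leaves
\[ \sum_{s \in \mS} \mu_s\, w_{\bi - s}(n-1) \;=\; 0, \qquad \mu_s := (a_s - b_s)\prod_{k=1}^d \alpha_k^{-\pi_k(s)}, \]
an identity that holds for every $(d+1)$-tuple $(i_1,\dots,i_d,n)$ and is precisely the hypothesis of Conjecture~\ref{conj2}. Under that conjecture every $\mu_s$ vanishes, and since the $\alpha_k$ are nonzero this forces $a_s = b_s = \beta \prod_k \alpha_k^{\pi_k(s)}$ for every $s\in\mS$, completing the argument. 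The only mildly delicate point is verifying the one-step recurrence at boundary positions $\bi$; this is immediate once one observes that a walk of length $n\geq 1$ from the origin to $\bi \in \mathbb N^d$ decomposes uniquely as a walk of length $n-1$ to some $\bi - s \in \mathbb N^d$ followed by $s$, and the zero-convention handles predecessors outside $\mathbb N^d$ correctly.
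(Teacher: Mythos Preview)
Your proof is correct and follows essentially the same approach as the paper: both arguments extract coefficients from the assumed identity, apply the one-step recurrence for weighted and unweighted walks, and subtract to produce an instance of Equation~\eqref{eq:conj2}. The paper's proof is a two-line sketch, whereas you have spelled out the details; your $\mu_s = (a_s - b_s)\prod_k \alpha_k^{-\pi_k(s)}$ differs from the paper's $\mu_s = a_s - \beta\prod_k \alpha_k^{\pi_k(s)}$ only by the nonzero factor $\prod_k \alpha_k^{-\pi_k(s)}$, which is immaterial since the conclusion in both cases is $\mu_s = 0$.
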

\begin{proof}  
The idea is to inject the recurrence relations satisfied by the numbers of walks, such as
\[  w_{i_1,\dots,i_d}(n) =  \sum_{s \in \mathcal S}  w_{i_1-\pi_1(s),\dots,i_d-\pi_d(s)}(n-1), \]
into Equation~\eqref{eq:Qm=Q}. Some basic calculations then show that Equation~\eqref{eq:Qm=Q} implies Equation~\eqref{eq:conj2} with $\mu_s = a_s - \beta  \prod_{k=1}^d \alpha_k^{\pi_k(s)}$. Thus, if Conjecture~\ref{conj2} holds $\mu_s = 0$ for every $s \in \mS$, which proves Conjecture~\ref{conj1}.
\end{proof}

In practice, given a particular model of walks it is easy to verify Conjecture~\ref{conj2}. We can compute the numbers $w_{i_1,\dots,i_d}(n)$ and build the system of equations 
\[\sum_{s \in \mS} \left( w_{i_1-\pi_1(s),\dots,i_d-\pi_d(s)}(n-1) \times \mu_s \right) = 0\] 
with unknowns $\mu_s$, indexed by $(i_1,\dots,i_d,n)$. This system is infinite, but in practice we restrict it to small values of $n$. The null vector is always a solution. If  it is the only one, then Conjecture~\ref{conj2} holds for the model under consideration.

We have computationally verified that this conjecture is satisfied for all the 2D models with small steps in the quadrant. This conjecture can also be proven for certain families of configurations.

\begin{proposition} 
\label{prop:111inS}
Let $\mS$ be a set of small steps such that $(1,1,\dots,1) \in \mS$. Then Conjecture~\ref{conj2} holds for that particular model.
\end{proposition}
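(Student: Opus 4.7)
The plan is to induct along a well-chosen linear ordering of $\mS$, and at each step isolate a single $\mu_{s_j}$ by specialising the hypothetical relation at a carefully chosen endpoint. The guiding partial order is the componentwise one on $\mS$: write $s \preceq s'$ if $\pi_k(s) \leq \pi_k(s')$ for every $k$. Because $(1,1,\dots,1) \in \mS$ and all steps are small, $(1,1,\dots,1)$ is the unique $\preceq$-maximum of $\mS$. Enumerate $\mS$ as $s_1,s_2,\dots,s_m$ by a linear extension of the \emph{opposite} order, so that $s_1 = (1,1,\dots,1)$ and whenever $s \succ s_j$ strictly, $s$ precedes $s_j$ in the enumeration.

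The workhorse is the following small-step observation: since every step has coordinates in $\{-1,0,1\}$, any walk of length $n-1$ ends at a point with each coordinate at most $n-1$, and simultaneous equality in every coordinate forces every step used to be $(1,1,\dots,1)$. Because $(1,1,\dots,1)\in\mS$ and that walk trivially stays in $\mathbb{N}^d$, this yields the single-value identity $w_{n-1,\dots,n-1}(n-1)=1$.

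For the inductive step, fix $n \geq 3$ and substitute
\[
(i_1,\dots,i_d) = \bigl(n-1+\pi_1(s_j),\,\dots,\,n-1+\pi_d(s_j)\bigr)
\]
into the assumed relation $\sum_{s \in \mS} \mu_s\,w_{i_1-\pi_1(s),\dots,i_d-\pi_d(s)}(n-1) = 0$. The $s$-th summand's $k$-th coordinate equals $n-1+\pi_k(s_j)-\pi_k(s)$, which exceeds $n-1$ in some coordinate—and therefore forces the lattice-walk count to vanish—unless $\pi_k(s) \geq \pi_k(s_j)$ for all $k$, i.e.\ unless $s \succeq s_j$. For $s=s_j$ the term is $\mu_{s_j}\cdot w_{n-1,\dots,n-1}(n-1) = \mu_{s_j}$, while for $s \succ s_j$ strictly, $s$ precedes $s_j$ in the enumeration, so $\mu_s = 0$ by the induction hypothesis. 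The relation therefore collapses to $\mu_{s_j}=0$, completing the induction (the base case $j=1$ gives $\mu_{(1,\dots,1)}=0$ directly, since $s \succeq (1,\dots,1)$ forces $s=(1,\dots,1)$).

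The argument has no genuine obstacle; the slightly delicate point is bookkeeping. Specifically one must check that for $n \geq 3$ and all $s \succeq s_j$ the indices $n-1+\pi_k(s_j)-\pi_k(s)$ remain in $[0,n-1]$, which holds because they differ from $n-1$ by at most $2$. The heart of the matter is really the uniqueness of the walk that maximises every coordinate at time $n-1$, which is precisely what the hypothesis $(1,1,\dots,1)\in\mS$ guarantees.
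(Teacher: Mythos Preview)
Your proof is correct and follows essentially the same strategy as the paper's: both arguments inductively isolate each $\mu_s$ by evaluating the relation at a point tailored to the current step, using that $(1,\dots,1)\in\mS$ to force a unique surviving walk. The only real differences are cosmetic: the paper orders steps by their coordinate \emph{sum} rather than the componentwise order, and evaluates at $n=2$ (endpoint $(1+\pi_1(s_0),\dots,1+\pi_d(s_0))$) rather than at $n\geq 3$. In fact your ``bookkeeping check'' is unnecessary---for $s\succ s_j$ the term dies because $\mu_s=0$, not because of the index range---so your argument already works verbatim at $n=2$; this matters only because the paper later invokes this proof to conclude $N_\mS=2$.
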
 

\begin{proof} 
We prove by induction on $j \geq 0$ that for every step $s \in \mS$,
\[\sum_{k=1}^d \pi_k(s) \geq n - j  \quad \Rightarrow \quad \mu_s=0.\]

When $j=0$ the only step satisfying this inequality is $s = (1,1,\dots,1)$. By setting $(i_1,\dots,i_d,n) = (1,\dots,1,1)$, Equation~\eqref{eq:conj2} becomes $\mu_{(1,1,\dots,1)} = 0.$

We now assume that the induction hypothesis is true for $j-1$ and let $s_0 \in \mS$ be a step such that $\sum_{k=1}^d \pi_k(s_0) = n - j$. We will consider Equation~\eqref{eq:conj2} with 
\[ (i_1,\dots,i_d,n) = (1+\pi_1(s_0),1+\pi_2(s_0),\dots,1+\pi_d(s_0),2).\] 
In order to efficiently index the sum in Equation~\eqref{eq:conj2}, we let $\mathcal T$ denote the set of steps $s \in \mS$ such that $w_{1+\pi_1(s_0)-\pi_1(s),\dots,1+\pi_d(s_0)-\pi_d(s),2} \neq 0$.  

For every $s \in \mathcal T$, there exists $s' \in \mS$ such that $\pi_k(s') = 1 + \pi_k(s_0)-\pi_k(s)$ for all $k \in \{1,\dots,d\}$. In particular, by summing the previous identity over all $k \in \{1,\dots,d\}$, we get
\[\sum_{k=1}^d \pi_k(s) = n + (n-j) - \sum_{k=1}^d \pi_k(s').\]
If $s \neq s_0$, then $s' \neq (1,1,\dots,1)$. In that case $\sum_{k=1}^d \pi_k(s') < n$ and $\sum_{k=1}^d \pi_k(s) > n - j$, so we can use the induction hypothesis on $s$ to prove that $\mu_s=0$. Therefore every term that is \textit{not} indexed by $s=s_0$ in Equation~\eqref{eq:conj2} vanishes for the evaluation  $(1+\pi_1(s_0),1+\pi_2(s_0),\dots,1+\pi_d(s_0),2)$. As a consequence we obtain $\mu_{s_0}=0$, which proves the induction hypothesis.
\end{proof}

\subsubsection{The minimal number of steps to prove the conjecture}

If Conjecture~\ref{conj2} turns out to be true, it is also natural to wonder about the minimal bound $N_{\mS}$ on $n$ such that Equation~\eqref{eq:conj2} holding for $n$ up to $N_{\mS}$ implies that $\mu_s=0$ for all $s \in \mS$. In terms of generating functions, $N_{\mS}$ is the minimal integer $n$ such that when the equality in Conjecture~\ref{conj1} holds up to order $n$ then the conclusion in the conjecture holds. 

For example, for the Gouyou-Beauchamps model we can prove that $N_{\mS}=3$. Indeed, with $n=1$ we can only prove that $\mu_{(1,0)} = 0$; then with $n=2$ we only have the restriction $\mu_{(-1,1)}=\mu_{(-1,0)} = 0$. The equality $\mu_{(1,-1)} = 0$ comes at $n=3$. This is due to the fact that a path in $\mathbb R^2_{\geq 0}$ that contains $(1,-1)$ must have a length at least equal to $3$. If $\mS$ has only small steps and $(1,1,\dots,1) \in \mS$, we refer to the proof of Proposition~\ref{prop:111inS} to see that $N_{\mS}=2$.

We have verified that $N_{\mS} \leq 4$ for every 2D model with small steps. Each time that $N_{\mS} = 4$, like for $\mS = \{(1,0),(-1,1),(-1,-1)\}$, this particular value of $N_{\mS}$ is forced by the fact that a particular step must be included in paths of length at least $4$; for $\{(1,0),(-1,1),(-1,-1)\}$ this step is $(-1,-1)$.

It can be shown that $N_{\mS} = 2^d$ is reached for some models with small steps in dimension $d$, such as  
\[\mS = \{(1,0,\dots,0),(-1,1,0,\dots,0),\dots,(-1,-1,-1,\dots,1),(-1,-1,-1,\dots,-1)\}.\]
It thus seems reasonable that $N_{\mS} \leq 2^d$ for every model $\mS$ with small steps in dimension $d$.  Unfortunately, the number~$N_{\mS}$ does not appear to be related to the maximal value over all elements $s\in \mathcal{S}$ of
\[ \min \{ \ell \geq 0 \ | \textrm{ there exists a path of length }\ell\textrm{ containing }s\}.\]
Indeed, there exist some models with a high number of symmetries where~$N_{\mS}$ exceeds that value. It is also for these particular models that Conjecture~\ref{conj2} seems hard to prove.  For example, the $16$-step model~$\mS_{\#}$ in dimension $4$ composed of the $4$ base steps $(1,0,0,0), (0,1,0,0), (0,0,1,0), (0,0,0,1)$ and the $12$ coordinates permutations of the vector $(-1,1,1,0)$ has $N_{\mS_{\#}} = 4$, although every step in $\mS_{\#}$ can be used in a path of length $2$.

\section{When does drift define a universality class?}
\label{sec:Diagrams}

\subsection{Introduction}

It is natural to ask how the drift of a step set affects the asymptotic behaviour of a model, and weighted models provide a good environment to explore this question since the weight parameters can vary continuously and sweep across drift profiles. One consequence of Theorem~\ref{thm:centralweighting} is that in two dimensions every generic central weighting can be defined by only two parameters up to a uniform scaling of weights. In this section, as in Part~\ref{part:GB}, the scaling is chosen in such a way that the constant $\beta$ defined in Theorem~\ref{thm:centralweighting} is equal to $1$.  Here we study how, for a given unweighted two dimensional model, the asymptotic formulas of the model endowed with central weights depends on these two parameters. This allows us to visualize universality classes defined by a model.

\begin{figure}\center
\includegraphics[width=.4\textwidth]{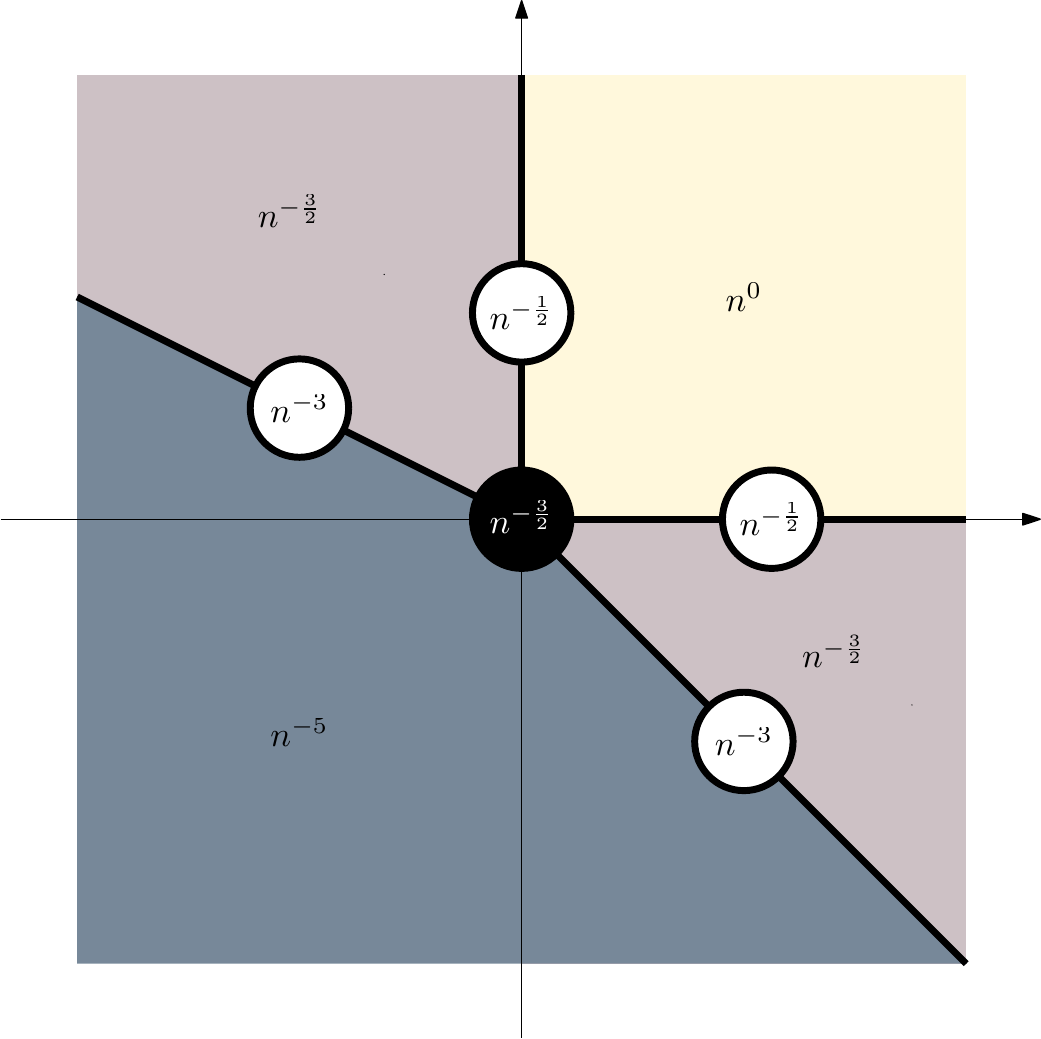}
\caption{The universality classes as a function of drift for the centrally weighted family of the GB model given by \eqref{eq:GB_weighting}: $(1,0)\rightarrow a$, $(-1,0)\rightarrow a^{-1}$, $(-1,1)\rightarrow a^{-1}b$,  $(1,-1)\rightarrow ab^{-1}$. Because the regions are cones the diagram does not change when each weight is multiplied by a constant, meaning this diagram holds for every central weighting.}
\label{fig:drift}
\end{figure}

As discussed above, there are known and conjectured formulas for the exponential growth and critical exponent of different lattice path models. They depend to some extent on the drift of the step set, although in general the critical exponent is not completely determined by the drift.

In the case of equivalence classes of models defined by (scaled) central weights, however, the drift alone is enough to decide the form of the first term of the asymptotic expansion.  That is, in these cases, the drift (as a function of the weights) defines the regions of universality classes. Considering for example the GB model, Figure~\ref{fig:drift} shows this correlation between drift and universality class.

The exact formulas for exponential growth given by Garbit and Raschel~\cite{GaRa16} depend on the minimum of a Laplace transform, defined below, in the cone dual to the cone in which the walks lie. Building on this work, Garbit, Mustapha and Raschel~\cite{GaMuRa16} have a conjecture for the critical exponent. We consider their conjectures in the case of the GB model, and in the next subsection show that these are the same conditions which distinguish cases in the ACSV analysis. This family of examples provides strong support for their conjecture.

The ACSV approach given in Part 1 is applicable to many of the lattice path models with D-finite generating functions. The connection between these two approaches is interesting because the results of Garbit, Mustapha and Raschel are conjectured to hold for any (non-singular) model. This suggests that the structure of the generating functions of the non-D-finite models might resemble the D-finite models: for instance they may be positive parts of (non-rational) meromorphic functions which can be described ``simply'' in terms of each model's steps.

\subsection{The conjecture of Garbit, Mustapha and Raschel} 
\label{ss:gmr}
We now present the conjecture of Garbit, Mustapha and Raschel (specialized for our setting). We begin by introducing some notation. Let $a_{s_1,s_2}$ be the weight of the step $(s_1,s_2)$, and $S(x,y)$ be the inventory 
\[S(x,y)=\sum_{(s_1,s_2)\in \mS} a_{s_1,s_2}\,x^{s_1}y^{s_2}.\]

\begin{rem} 
Although the conjecture of Garbit, Mustapha and Raschel is formulated in terms of the \emph{Laplace transform\/} $L(x,y)=\sum_{(s_1,s_2)\in \mS} a_{s_1,s_2} e^{s_1x+s_2y}$, in order to be consistent with our previous notation we state it in terms of the inventory $S(x,y)$.  Observe that both functions are linked by the relation $S(x,y)=L(\ln(x),\ln(y))$, so the translation is quite straightforward. 
\end{rem}

For a non-singular model, $S(x,y)$ has a unique positive critical point $(\xs, \ys)$ that satisfies $S_x(\xs, \ys)=S_y(\xs, \ys)=0$; note that this critical point is a function of the weights. For the GB model with weights given by Figure~\ref{fig:GB_weights} we have 
\begin{equation}
\label{eq:formula_min_plane}
     (\xs,\ys)=\left(a^{-1},b^{-1}\right).
\end{equation}     
We define the \emph{covariance factor\/}
\begin{equation}
c= \frac{S_{xy}(\xs, \ys)}{\sqrt{S_{xx}(\xs, \ys) S_{yy}(\xs,\ys)}}.
\label{eq:c}
\end{equation}
The value of~$c$ is used to determine the exponential growth, and also appears in the formula for the critical exponent. When we consider only central weightings, the value of~$c$ in Equation~\eqref{eq:c} does not depend on the weights $a_{i,j}$. For example, every centrally weighted GB model has $c=-\frac{\sqrt{2}}{2}$.

\begin{lem} 
\label{lem:c_independent_weights}
For any centrally weighted model, the covariance factor $c$ in \eqref{eq:c} does not depend on the weights.
\end{lem}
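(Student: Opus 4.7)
The plan is to exploit the scaling relation between the weighted and unweighted inventories supplied by Theorem~\ref{thm:centralweighting}(ii). Let $S_{\mathfrak a}(x,y) = \sum_{(s_1,s_2)\in\mS} a_{s_1,s_2}\,x^{s_1}y^{s_2}$ be the weighted inventory and $S_{\bone}(x,y) = \sum_{(s_1,s_2)\in\mS} x^{s_1}y^{s_2}$ the unweighted one. A central weighting has the form $a_{s_1,s_2} = \beta\,\alpha_1^{s_1}\alpha_2^{s_2}$, which yields the clean identity
\[
S_{\mathfrak a}(x,y) \;=\; \beta\, S_{\bone}(\alpha_1 x,\,\alpha_2 y).
\]
This is the key observation that will make the dependence on $(\alpha_1,\alpha_2,\beta)$ pull out as multiplicative factors in all subsequent quantities.

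Next, I would locate the positive critical point by applying the chain rule: the system $\partial_x S_{\mathfrak a} = \partial_y S_{\mathfrak a} = 0$ becomes $\beta\alpha_1 (\partial_u S_{\bone})(\alpha_1 x,\alpha_2 y)=0$ and $\beta\alpha_2 (\partial_v S_{\bone})(\alpha_1 x,\alpha_2 y)=0$. Hence if $(u_s,v_s)$ denotes the (unique, positive) critical point of the unweighted inventory, the critical point of $S_{\mathfrak a}$ is $(x_s,y_s) = (u_s/\alpha_1,\, v_s/\alpha_2)$. Notice that this recovers, as a special case, the formula~\eqref{eq:formula_min_plane} for the GB model, for which $u_s = v_s = 1$.

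I would then compute the Hessian of $S_{\mathfrak a}$ at $(x_s,y_s)$ by differentiating the identity $S_{\mathfrak a}(x,y)=\beta S_{\bone}(\alpha_1 x,\alpha_2 y)$ twice and evaluating at the critical point:
\[
S_{\mathfrak a,xx}(x_s,y_s)=\beta\alpha_1^{2}\,S_{\bone,uu}(u_s,v_s), \quad S_{\mathfrak a,yy}(x_s,y_s)=\beta\alpha_2^{2}\,S_{\bone,vv}(u_s,v_s),
\]
\[
S_{\mathfrak a,xy}(x_s,y_s)=\beta\alpha_1\alpha_2\,S_{\bone,uv}(u_s,v_s).
\]
Substituting these into the definition~\eqref{eq:c} of the covariance factor, the factor $\beta\alpha_1\alpha_2$ in the numerator is exactly canceled by $\sqrt{\beta^2\alpha_1^2\alpha_2^2}$ in the denominator (both quantities being positive, as weights and the $\alpha_k$ are positive), leaving
\[
c \;=\; \frac{S_{\bone,uv}(u_s,v_s)}{\sqrt{S_{\bone,uu}(u_s,v_s)\,S_{\bone,vv}(u_s,v_s)}},
\]
which depends only on the unweighted step set $\mS$. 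This is the desired conclusion. There is essentially no obstacle here beyond the straightforward chain-rule bookkeeping; the only mild point of care is verifying that the positive critical point is unique and transforms correctly under the rescaling, which is guaranteed by the non-singularity hypothesis on $\mS$ and the positivity of the weights $\alpha_1,\alpha_2,\beta$.
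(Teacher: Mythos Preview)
Your proof is correct and follows exactly the approach the paper indicates: the paper merely states that the lemma ``follows from straightforward calculations after using Theorem~\ref{thm:centralweighting} to write the weight assigned to $(s_1,s_2)\in\mS$ in the form $\beta a^{s_1}b^{s_2}$,'' and you have carried out precisely those calculations via the scaling identity $S_{\mathfrak a}(x,y)=\beta\,S_{\bone}(\alpha_1 x,\alpha_2 y)$ and the chain rule.
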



Lemma~\ref{lem:c_independent_weights} follows from straightforward calculations after using Theorem~\ref{thm:centralweighting} to write the weight assigned to $(s_1,s_2) \in \mS$ in the form $\beta a^{s_1} b^{s_2}$. We remark that the exponential growth in the reluctant case is given by $S(\xs, \ys)$ and, evaluating this, it is clear how cancellation gives a value independent of the weights.

Let $\mathcal Q$ be the quarter plane defined by 
\[ \mathcal Q = \left\{ (x,y) \in \mathbb R^2 \ \big\vert \ x \geq 1\textrm{ and }y \geq 1 \right\}.\]
Their conjecture states that the universality classes are defined by~$(x^*,y^*)$, the minimum of the inventory $S$ on the cone\footnote{The set $\mathcal{Q}$ is the image of the quarter plane $\left(\mathbb{R}_{\geq 0}\right)^2$ under the transformation $(x,y) \mapsto \left(e^x,e^y\right)$, coming from the fact that the results of Garbit and Raschel~\cite{GaRa16} and Garbit et al.~\cite{GaMuRa16} minimize the Laplace transform $L(x,y)$ instead of the characteristic polynomial $S(x,y)$.  For a walk confined to a more general cone $K$, one must compute the minimum of the characteristic polynomial on the image of the dual cone $K^*$ under the map $(x,y) \mapsto \left(e^x,e^y\right)$. Note that the quarter plane is self-dual. } $\mathcal Q$: \[S(x^*,y^*)=\min_{(x,y) \in \mathcal Q}S(x,y).\] The minimizing point is unique by the strict convexity of the Laplace transform. The minimum is achieved at the critical point $(\xs, \ys)$ when this point is in the region, otherwise it is achieved on the boundary of $\mathcal Q$.

\begin{conjecture}[Garbit, Mustapha and Raschel~\cite{GaMuRa16}]
\label{conj:GMR}
Suppose that $\mS$ is a non-singular step set. Then $[t^n]Q(1,1;t)$ has critical exponent defined by Table~\ref{tab:KR}. 
\end{conjecture}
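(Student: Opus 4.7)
The conjecture is stated for all non-singular step sets, but the paper really only establishes it in specific cases (all of the GB weightings, covering the whole drift plane). The plan is therefore to push the ACSV argument of Part~\ref{part:GB} to the broadest possible class and to isolate where a genuinely new idea is required. The first move is to reformulate the minimizer condition in ACSV language: if the rational diagonal representation of $Q(x_1,\dots,x_d;t)$ (coming from an orbit-sum or kernel-method derivation) has denominator containing the kernel factor $1-txyS(\ox,\oy)$ together with the smoothing factors $(1-x)$ and $(1-y)$, then by Lemma~\ref{lem:minpts} minimal points satisfy $|x|,|y|\le 1$ together with $|t|\,|xy|\,S(|\ox|,|\oy|)\le 1$. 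Under the change of coordinates $(x,y)\mapsto (1/x,1/y)$ the locus $|x|,|y|\le 1$ becomes exactly the cone $\mathcal{Q}$ of Garbit--Mustapha--Raschel, and the exponential growth $|xyt|^{-1}$ equals the inventory $S$ evaluated on $\mathcal{Q}$. Consequently minimizing $|xyt|^{-1}$ over $\overline{\mD}\cap\mV$ coincides with $\min_{\mathcal{Q}}S$, recovering the Garbit--Raschel formula and simultaneously locating the contributing point on the expected stratum.

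Next I would establish the geometric dictionary underlying the table of critical exponents. The four possibilities for the minimizer---the interior critical point $(\xs,\ys)$ of $S$, the two boundary axes $\{x=1\}$ and $\{y=1\}$, or the corner $(1,1)$---correspond bijectively to the strata $\mV_1$, $\mV_{12}$, $\mV_{13}$, $\mV_{123}$ in the ACSV analysis. For each of these strata one has an explicit Fourier--Laplace integral (as in Subsection~\ref{ss:asympcontr}), whose asymptotic contribution is governed by Proposition~\ref{prop:HighAsm}: the factor $n^{-d_X/2}$, with $d_X$ the codimension of the stratum $X$, gives the ``geometric'' portion of the exponent, while the vanishing order of the numerator $G$ at the contributing point gives an integer shift. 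The crucial step is to read off the vanishing order in a uniform way from the data of the model, rather than from case-by-case computation as in Propositions~\ref{prop:contrib} and~\ref{prop:critpts}. I expect this to reduce, using Theorem~\ref{thm:centralweighting}~\ref{(iii)} and Proposition~\ref{prop:Qm=Q}, to the vanishing order for the \emph{unweighted} model, which in turn should be controlled by the covariance factor $c$ of Equation~\eqref{eq:c}; Lemma~\ref{lem:c_independent_weights} confirms $c$ is a genuine invariant of the equivalence class and therefore a legitimate parameter in Table~\ref{tab:KR}.

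The main obstacle is twofold. First, for non-D-finite models there is no rational diagonal representation, so the ACSV route is unavailable and one must argue through the probabilistic framework of Denisov--Wachtel~\cite{DeWa15} and Duraj~\cite{Du14} together with the exponential growth identity of Garbit--Raschel~\cite{GaRa16}. The natural strategy is a Cram\'er tilt: by Theorem~\ref{thm:centralweighting}~\ref{(ii)} any weighted model is an exponential change of measure of an unweighted one, so one can tilt so that the minimizer of the tilted inventory moves to a prescribed location on $\partial\mathcal{Q}$, then import the corresponding local limit theorem from the tilted random walk. The second obstacle is that the critical exponent is \emph{not} a continuous function of the weights: the transitional boundaries in the GB picture show jumps of $\tfrac12$ or $1$ in $\alpha$, driven by simplification in the numerator $G$ when $a$ or $b$ hits $1$. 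To handle this in general one must characterize the extra algebraic relation among the weights that produces numerator vanishing, and I would expect this to correspond precisely to the transition in the probabilistic picture where the optimal tilted drift becomes tangent to a face of $\mathcal{Q}$.

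A reasonable order of attack is therefore: (i) prove the D-finite case by the ACSV--stratification argument sketched above, matching strata to boundary behaviour of $S$ on $\mathcal{Q}$; (ii) verify the covariance-factor dependence by extending the numerator-vanishing analysis of Subsection~\ref{sec:contrib} from GB to arbitrary small-step models, which should go through since the denominator structure of the rational diagonal is universal; (iii) use the Cram\'er-tilt reduction and the results of~\cite{DeWa15,Du14,GaMuRa16} to cover the non-D-finite non-singular models, modulo the transitional-boundary subtleties. The step I expect to be genuinely hard is (iii), because the existing probabilistic local limit theorems require drift configurations (zero drift, interior negative drift) that are strictly weaker than what Conjecture~\ref{conj:GMR} asks for on the full parameter space.
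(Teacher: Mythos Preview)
The statement you are asked to prove is explicitly a \emph{conjecture} in the paper; there is no proof to compare against. The paper verifies it only for the centrally weighted Gouyou--Beauchamps family (Theorem~\ref{thm:main_asymptotic_result}) and then, in Section~\ref{sec:ACSVConnect}, sketches exactly the dictionary you describe in your steps (i)--(ii): the minimizer of $S$ on $\mathcal{Q}$ corresponds to the contributing stratum under ACSV, with the three rows of Table~\ref{tab:KR} matching the strata $\mV_1$, $\mV_{12}\cup\mV_{13}$, $\mV_{123}$. The paper then stops and records even the D-finite case as a separate open problem (Conjecture~\ref{conj:ACSV}). So your plan for (i)--(ii) is essentially the paper's own outline, and your step (iii) via a Cram\'er tilt goes beyond anything the paper attempts.

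Two substantive remarks. First, a slip: the geometric factor from Proposition~\ref{prop:HighAsm} is $n^{-\dim X/2}$, not $n^{-d_X/2}$ with $d_X$ the codimension---after taking residues one is left with a Fourier--Laplace integral over (a neighbourhood in) the stratum itself, so it is its complex dimension that enters. Second, and more seriously, your expectation that the numerator vanishing order ``should be controlled by the covariance factor $c$'' is precisely the missing structural ingredient, not a routine computation. The paper says as much: ``it is hard to predict from ACSV without doing the actual calculations the values of the critical exponents, and the distinction induced by the three columns of Table~\ref{tab:KR}''. For the 19 orbit-sum models $p_1=\pi/\arccos(-c)$ happens to be an integer and can in principle match a polynomial vanishing order, but no mechanism linking the two is known; for non-D-finite models $p_1$ is typically irrational and cannot arise from polynomial vanishing at all, so your step (ii) cannot feed into step (iii) in the way you suggest. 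The Cram\'er-tilt reduction you propose for (iii) is the natural probabilistic move, and you are right that the existing local limit theorems (Denisov--Wachtel for zero drift, Duraj for interior negative drift) leave the directed and axial regimes uncovered; filling those gaps is the real content of the conjecture, not a technicality.
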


\begin{table}[h]\center \renewcommand{\arraystretch}{1.5}
\begin{tabular}{c|ccc }
\begin{minipage}{3.1cm}
\ 
\end{minipage}
& 
\begin{minipage}{4cm}
\begin{center}
$\nabla S(x^*,y^*) =0$ \\
(i.e., $(x^*,y^*)=(\xs,\ys)$)
\end{center}
\end{minipage}
& 
\begin{minipage}{4.9cm}
\begin{center}
$S_x(x^*,y^*)=0$ or $S_y(x^*,y^*)=0$

\end{center}
\end{minipage}
& 
\begin{minipage}{3.3cm}
\begin{center}
$S_x(x^*,y^*)>0$ and $S_y(x^*,y^*)>0$
\end{center}
\end{minipage}
\\[2mm]\hline
$(x^*,y^*)= (1,1)$ &

\begin{minipage}{3.5cm}
\vspace*{0.1cm}
\begin{center}
$S(1, 1)^n\,n^{-p_1/2}$ \\ {\color{gray} balanced}
\end{center}
\end{minipage}
&

\begin{minipage}{4.9cm}
\vspace*{0.1cm}
\begin{center}
$S(1,1)^n\,n^{-1/2}$ \\{\color{gray}axial}
\end{center}
\end{minipage}
 & 
\begin{minipage}{3.3cm}
\vspace*{0.1cm}
\begin{center}
$S(1,1)^n n^0$\\{\color{gray}free}
\end{center}
\end{minipage}
 \\[3mm]\hline
$x^*=1$ or $y^*=1$ &  
\begin{minipage}{3.5cm}
\begin{center}
\vspace*{0.1cm}
$S(\xs, \ys)^n\, n^{-p_1/2-1}$\\{\color{gray}transitional}\\ \vspace*{-0.4cm} \ 
\end{center}
\end{minipage}
&
\begin{minipage}{4.9cm}
\begin{center}
$\min\{S(x_1, 1),S(1, y_1)\}^n\,n^{-3/2}$ \\ {\color{gray} directed} \\
\vspace*{-0.4cm} \ 
\end{center}
\end{minipage}
& (not possible)\\[3mm]\hline
$x^*>1$ and $y^*>1$ & 
\begin{minipage}{3.5cm}
\begin{center}
\vspace*{0.1cm}
$S(\xs, \ys)^n n^{-p_1 -1}$\\{\color{gray}reluctant}\\ \vspace*{-0.4cm} \ 
\end{center}
\end{minipage}
& (not possible) & (not possible)\\[3mm]\hline
\end{tabular}

\caption{Conjectured value of the exponential growth and connective constant for two dimensional models in the cone $\mathbb{R}_{\geq0}^2$. Here $(\xs, \ys)$ is the unique positive critical point of the inventory~$S(x,y)$, $S(x,1)$ is minimized at $x_1$, $S(1,y)$ is minimized at $y_1$ and $p_1= \pi/\arccos(-c)$ where $c$ is the covariance factor. Observe that when $(x^*,y^*) = (1,1)$, the vector $(S_x(x^*, y^*),S_y(x^*, y^*))$ is the drift of the model.} 
\label{tab:KR}
\end{table}

\begin{rem}
The exponential growths stated here have been proven by Garbit and Raschel~\cite{GaRa16}. Moreover, Duraj~\cite{Du14} established exact asymptotic estimates in the case of an interior point $(x^*, y^*)$, which corresponds to a \emph{reluctant\/} universality class. 
\end{rem}

Theorem~\ref{thm:main_asymptotic_result} verifies the conjecture for GB walks with a central weighting. For these models, $p_1=\pi/\arccos(\sqrt{2}/2)=4$ and the different regions of universality classes depend either on the drift, or on the signs of $\xx - 1= a^{-1} - 1$ and $\yy - 1 = b^{-1} - 1$. For instance, $\nabla S(x^*, y^*)=(0,0)$ is true if and only if $a \leq 1$ and $b \leq 1$.

Moreover, Table~\ref{tab:KR} gives a formal definition of the six different classes of walks we mentioned in Part~\ref{part:GB} for the GB walks: \textit{balanced}, \textit{reluctant}, \textit{transitional}, \textit{directed}, \textit{axial} and \textit{free} walks.  These adjective describe the qualitative nature of the walks, for example a free walk behaves as if it is unrestrained whereas a reluctant walk behaves like an excursion ending at the origin. In practice, for a central weighting of the form $a_{i,j} = a^i b^j$, it is quite easy to describe these six regions. The free region always corresponds to the walks where the drift is doubly positive; the axial region corresponds to the case where the drift is positive on one coordinate, null on the other one; and the balanced case corresponds to the walk with a zero drift. For the reluctant case, one has to compute the minimum $(x_{s_0},y_{s_0})$ of the \textit{unweighted} inventory $S_0(x,y) = \sum_{(s_1,s_2) \in \mS} x^{s_1} y^{s_2}$. The reluctant region is defined by $a < x_{s_0}$ and $b < y_{s_0}$, while the transitional region is defined by $a < x_{s_0}$ and $b = y_{s_0}$, or $a = x_{s_0}$ and $b < y_{s_0}$. Finally, the remaining region corresponds to the directed case.

\subsection{Connecting back to ACSV}
\label{sec:ACSVConnect}
As we saw in Section~\ref{sec:ACSV} the generating function of the GB model has a strong structure coming from its representation as a rational diagonal, which allows for a detailed analysis. There exists a close relationship between the ACSV and probabilistic approaches for all models with small steps where such a diagonal representation is possible.

\begin{conjecture}
  \label{conj:ACSV}
  If $\mS$ is one of the 19 small step lattice path models with a
  finite non-zero orbit sum then the divisions for the universality
  classes of centrally weighted walk models are given by
  Conjecture~\ref{conj:GMR}.
\end{conjecture}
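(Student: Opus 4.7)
The plan is to mirror the ACSV analysis carried out for the GB model in Section~\ref{sec:ACSV}, applied uniformly to the 19 small-step models with finite non-zero orbit sum. For each such model, the orbit sum method of Bousquet-M\'elou and Mishna yields an explicit expression of $Q(x,y;t)$ as a positive series extraction of a rational function whose denominator has the form $1-tS(x,y)$. By Theorem~\ref{thm:centralweighting}, a central weighting acts on the kernel by the substitution $(x,y,t)\mapsto(\alpha_1 x,\alpha_2 y,\beta t)$, so the weighted generating function $Q_{\mathfrak a}(1,1;t)$ can be written as the diagonal of a rational function $F(x,y,t)=G(x,y)/H(x,y,t)$ with
\[
H(x,y,t)=\bigl(1-txyS(\ox,\oy)\bigr)(1-x)(1-y),
\]
exactly as in Equation~\eqref{eqn:weighted_extraction}, where $S$ is the weighted inventory and $G$ depends on the specific orbit.

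First, I would re-run Lemma~\ref{lem:minpts} verbatim: the denominator $H$ has the same three-factor shape for every model, so the minimal points are described by $|x|\leq 1$, $|y|\leq 1$, $|t|\leq (|xy|S(|\ox|,|\oy|))^{-1}$ with not all inequalities strict. Second, I would stratify $\mV$ into the same seven strata $\mV_1,\ldots,\mV_{123}$ and apply Proposition~\ref{prop:findcritpts}. Because only $H_1$ involves $t$, the only strata that can contain critical points are $\mV_1,\mV_{12},\mV_{13},\mV_{123}$, with $(x,y)$-coordinates determined respectively by $(\nabla S)(\ox,\oy)=0$, by $x=1$ and $S_y(1,\oy)=0$, by $y=1$ and $S_x(\ox,1)=0$, and by $(x,y)=(1,1)$. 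These are precisely the conditions distinguishing the four columns/rows in Table~\ref{tab:KR}, since the critical point of $S$ in $\mathcal{Q}$ is the reciprocal of the critical point of $S(\ox,\oy)$ in $(0,1]^2$.

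Third, Lemma~\ref{lem:expmin} generalizes without change: $S(\ox,\oy)$ blows up as $x,y\to 0^+$ regardless of model, so a minimizer of $|xyt|^{-1}$ on $\overline{\mD}\cap\mathbb R_+^3$ lies at a critical point on the appropriate face. The contributing point dictionary of Proposition~\ref{prop:contrib} then translates directly into the six regions of Table~\ref{tab:KR}: interior critical point gives balanced/transitional/reluctant (depending on whether zero, one, or two of the factors $(1-x),(1-y)$ are canceled by $G$), axis-bound critical point with vanishing gradient gives directed, and $(1,1)$ with positive gradient gives free; the axial cases arise on the boundaries. Finally, applying Proposition~\ref{prop:HighAsm} as in Sections~\ref{ss:critpts}--\ref{ss:asympcontr} yields both the exponential growth $S(x^*,y^*)^n$ and the critical exponent, with the exponent $p_1/2=\pi/(2\arccos(-c))$ emerging from the Hessian determinant of $\phi=-\log S(\ox e^{-i\theta_1},\oy e^{-i\theta_2})$ at the critical point and the shifts by $1$ or $1/2$ coming from integrating through the residues at $(1-x)$ and $(1-y)$.

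The main obstacle is twofold. The easy part is that the denominator geometry is uniform across the 19 models, so minimal points, critical points, and the Fourier--Laplace machinery transfer in a model-independent way. The hard part is controlling the numerator $G(x,y)$: its order of vanishing at each contributing point governs which constants $C_k$ in Proposition~\ref{prop:HighAsm} survive, and hence whether one sees the balanced exponent $p_1/2$ or a shifted variant. For models whose orbit sum produces a $G$ that vanishes to unexpectedly high order at $(1,1)$ or along an axis, a decomposition trick like the $y-x=(1-x)-(1-y)$ split used in the axial case must be found model by model, and verifying that this decomposition always yields asymptotic contributions matching $n^{-p_1/2}, n^{-p_1/2-1}, n^{-p_1-1}$ from Table~\ref{tab:KR} is the delicate step. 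A complete proof therefore reduces to a finite but tedious case analysis over the 19 models combined with a structural lemma ensuring that the orbit-sum numerator always vanishes to exactly the order predicted by the probabilistic scaling $p_1=\pi/\arccos(-c)$, with $c$ the covariance factor from Lemma~\ref{lem:c_independent_weights}.
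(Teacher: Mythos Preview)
The statement you are trying to prove is labeled a \emph{conjecture} in the paper, and the paper does not prove it. The surrounding discussion (Section~\ref{sec:ACSVConnect}) does exactly what you do: it outlines the ACSV approach, explains why the denominator structure suggests the same case split as Table~\ref{tab:KR}, and then explicitly says that ``there are technical difficulties which can arise'' and that finding the smallest non-zero $C_k$ in Proposition~\ref{prop:HighAsm} ``can be a tedious computation for each of the 19 weighted models.'' So your proposal is not a proof but a restatement of the paper's own heuristic, and you correctly identify the same obstruction (the order of vanishing of the numerator $G$ at the contributing points).

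That said, two points in your outline are genuinely wrong, not just incomplete. First, you assert that ``the denominator $H$ has the same three-factor shape for every model.'' The paper's footnote in Section~\ref{sec:ACSVConnect} warns that for some of the 19 models the diagonal expression has additional denominator factors such as $x^2+a^2$ or $x^2+ax+a^2$; these do not create new critical points but can spoil the minimality argument of Lemma~\ref{lem:minpts} for certain weight values, so Lemma~\ref{lem:minpts} does \emph{not} transfer verbatim. Second, you claim that the exponent $p_1/2=\pi/(2\arccos(-c))$ ``emerges from the Hessian determinant of $\phi$.'' This is not how ACSV produces critical exponents: the Hessian contributes a multiplicative constant $\det(\mH)^{-1/2}$, while the power of $n$ comes from the ambient dimension via $(2\pi/n)^{d/2}$ together with the order of vanishing of the numerator at the critical point (see the last sentence of Proposition~\ref{prop:HighAsm}). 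In the GB case $p_1=4$ arises because $G$ vanishes to order four at $c_1^\pm$ in the balanced case, not from any Hessian computation. For a general model there is no mechanism in your sketch that would produce the specific value $\pi/\arccos(-c)$; that link is precisely what is missing and why the statement remains a conjecture.
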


Although the tools of ACSV should allow one to prove Conjecture~\ref{conj:ACSV}, there are technical difficulties which can arise.  Additionally, one must find the smallest non-zero constant $C_k$ when Proposition~\ref{prop:HighAsm} is used to compute asymptotics around minimal critical points under different weight regimes, which can be a tedious computation for each of the 19 weighted models. Instead, we highlight the connection between Conjecture~\ref{conj:GMR} and ACSV, and mention the ingredients necessary for a proof of Conjecture~\ref{conj:ACSV}.

Recall Equation~\eqref{eq:rho=sup} and Lemma~\ref{lem:expmin}, which imply that the radius of convergence of the GB generating function $Q_{\mathfrak a}(t)$  is the supremum of $\vert xyt\vert $ over all minimal critical points. It is easy to verify this for most\footnote{The rational functions corresponding to some models have denominators which additionally contain one of the polynomials $x^2+a^2$ and $x^2+ax+a^2$ as factors.  These polynomials do not introduce new critical points but may affect minimality of existing critical points for certain values of $a$, complicating the analysis.} of the 19 models with small steps, since their generating functions can be usually be expressed as the diagonal part of a rational function of the form  
\[\frac{G(x,y,t)}{(1 - xytS(\overline x,\overline y))(1-x)(1-y)},\] 
where $S$ is the inventory of the walk model. By Proposition~\ref{prop:findcritpts}, a critical point must belong to the variety $1 - xytS(\overline x,\overline y)=0$.  Hence, by eliminating $t$ we reduce the problem to the study of the supremum of $\vert S(\overline x,\overline y)^{-1}\vert $ over all minimal points; i.e., points satisfying $0<\vert x\vert  \leq 1$ and $0<\vert y\vert \leq 1$.

For non-singular models this supremum is actually achieved at a point $(x,y) = ((x^*)^{-1},(y^*)^{-1})$ in $\mathcal{Q}$. We then observe a correspondence between the different regions of universality (see Table~\ref{tab:KR}) and the different strata (see Subsection~\ref{ss:critpts}):
 \begin{itemize}
 	\item If the minimum of $S$ is achieved in the interior of $\mathcal Q$, the walk model is reluctant. Thus, since $\left\vert S(\overline x,\overline y)^{-1}\right\vert $ is maximized over all minimal critical points at a point $(x,y)$ satisfying $\vert x\vert < 1$ and $\vert y\vert  < 1$, the contributing points must belong to $\mV_1$. 

	\item If the minimum of $S$ is achieved at a point such that $x=1$ or $y=1$ (but not both), then the walk model is either directed, or transitional. Similarly, the two corresponding strata are $\mV_{12}$ or $\mV_{13}$, depending whether the maximum is achieved on the $x$-axis, or the $y$-axis.
 
	\item Finally, if the minimum is achieved at $(1,1)$, then we are either in the free, axial, or balanced case. The contributing point belongs thus to $\mV_{123}$.
 \end{itemize}
 By this reasoning we recover the exponential growths in Table~\ref{tab:KR}. To find the critical exponents we must follow the approach described in Subsection~\ref{ss:asympcontr} and apply Proposition~\ref{prop:HighAsm}. This is the step that permits the distinction between the directed and transitional cases, and between the free, axial and balanced cases. As in the GB case, there may be simplifications between numerator and denominator in the borderline cases (i.e., transitional, axial and balanced).

 In summary, ACSV allows one to verify Conjecture~\ref{conj:GMR} with explicit computations, including leading asymptotic constants. There is also a strong connection between the three rows of Table~\ref{tab:KR} and the different strata. However, it is hard to predict from ACSV without doing the actual calculations the values of the critical exponents, and the distinction induced by the three columns of Table~\ref{tab:KR}.  It is our hope to understand Conjecture~\ref{conj:GMR} better via a more advanced ACSV approach, and extend the proof to non-D-finite generating functions.

\subsection{Drift diagrams for universality classes}
\label{sec:SimpleandWeight}
 We end with some additional examples of diagrams for universality classes, to compare against the Gouyou-Beauchamps model.
\begin{example}
Consider the model defined by the step set $\{(1,0), (-1, 1), (0,-1) \}$, dubbed the \emph{Tandem model} due to its applications in queuing theory. As it has only three steps, all positive weightings are central. We consider the following family of central weightings: 
\begin{equation}
    (1,0)\mapsto a,\qquad (-1,1)\mapsto a^{-1}b,\qquad (0,-1)\mapsto  b^{-1}.
    \label{eq:tandem}
\end{equation}
As in the GB model, there is a breakdown of cases for the contributing singularities based on where one finds critical minimal points. The drift of the weighted model is $(a-b/a, b/a-1/b)$, and the critical exponent of the model can be determined directly from it.  Of note is that the region defining the reluctant universality class is not a cone.

\begin{figure}\center
\includegraphics[width=.4\textwidth]{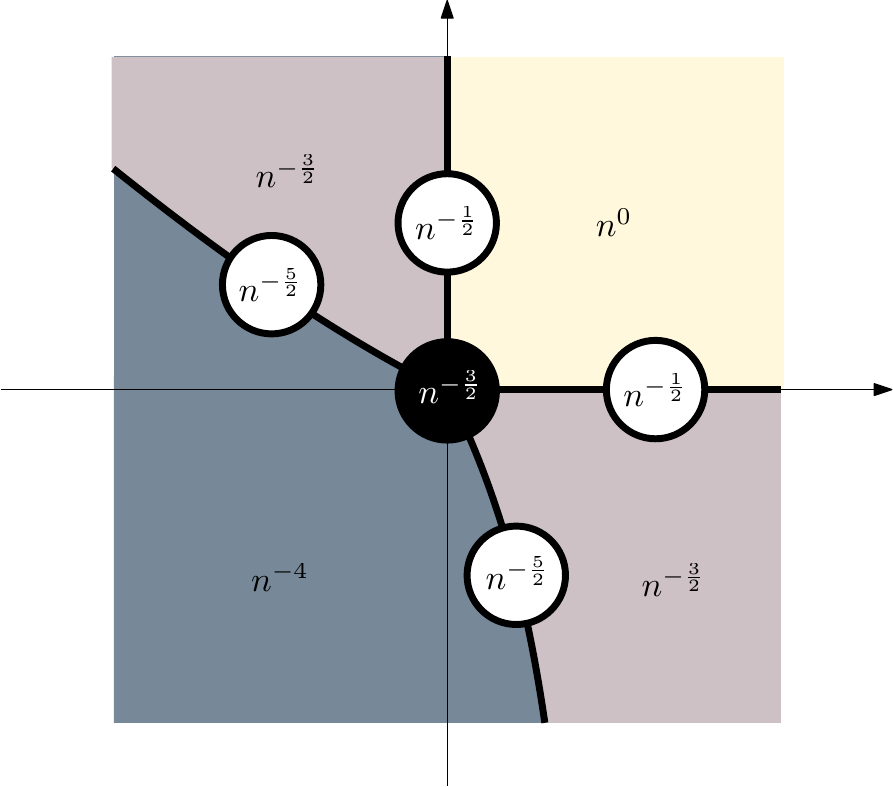}
\caption{The universality classes as a function of drift for the centrally weighted family of the Tandem model given by \eqref{eq:tandem}: $(1,0)\rightarrow a$, $(-1,1)\rightarrow a^{-1}b$, $(0,-1)\rightarrow b^{-1}$.}
\end{figure}
\end{example}

Although the drift diagram for the Tandem model does not separate into regions which are cones, there are models where this occurs. Consider for example the Gessel model with weights 
\[ (-1,0)\mapsto a^{-1}, \qquad (1,0)\mapsto a,\qquad (1,1)\mapsto ab,\qquad (-1,-1)\mapsto  a^{-1}b^{-1}.\]
It can be shown that this model is reluctant if and only if $a < 1$ and $b < 1$. In terms of the drift $(d_x,d_y)$, this means that the reluctant region is defined by the cone $2d_y - d_x < 0$ and $2d_x - d_y<0$.

\begin{figure}\center
\mbox{}\hfill \includegraphics[width=0.4 \textwidth]{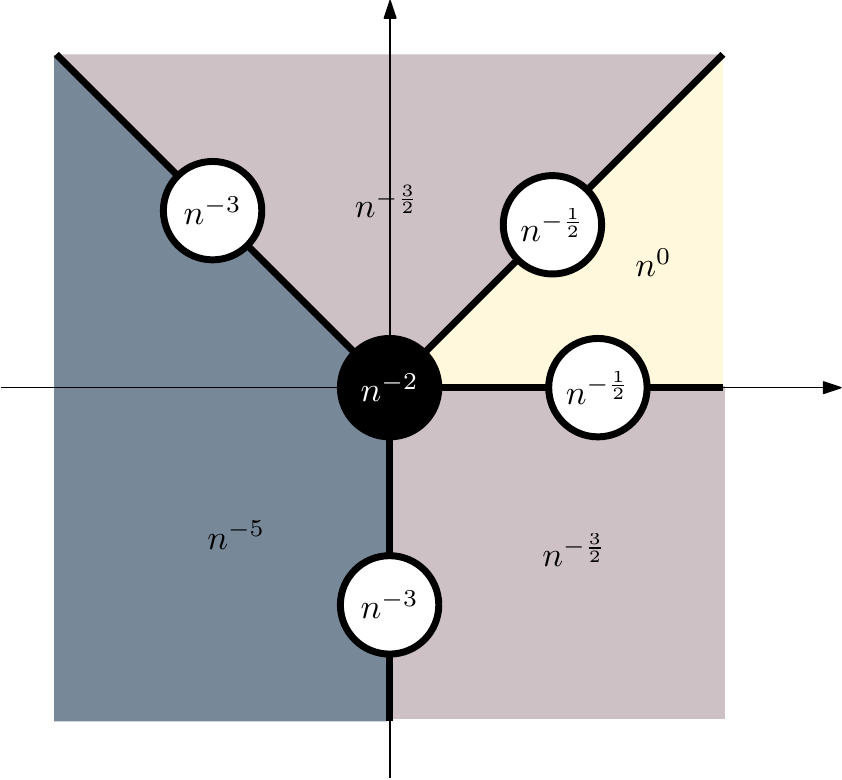} \hfill
\includegraphics[width=0.4 \textwidth]{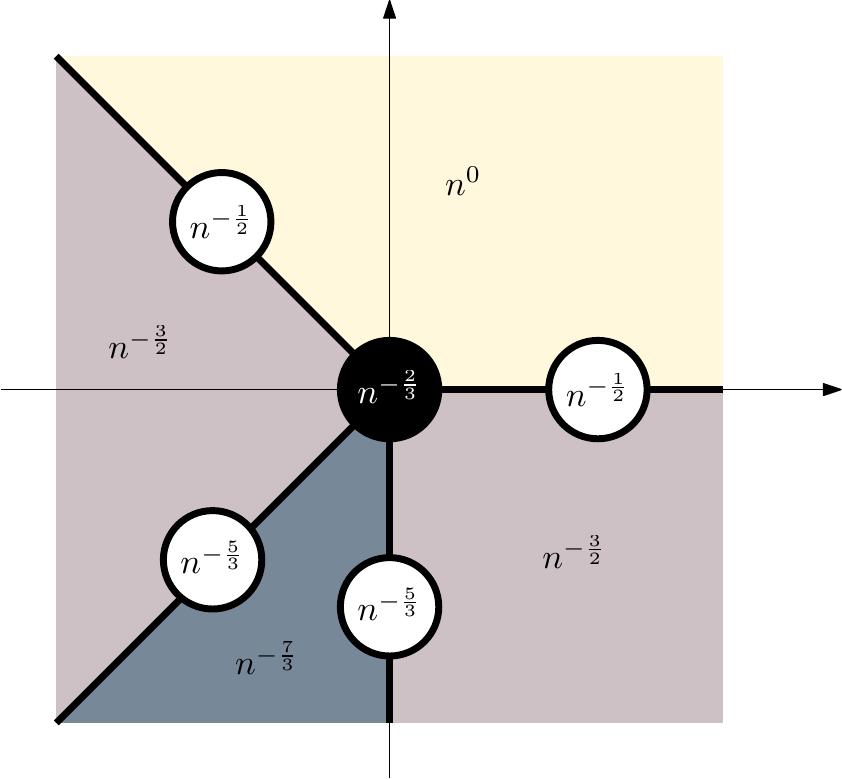} \hfill\mbox{}
\caption{Universality classes defined by drift for simple walks in the cone $\frac{\pi}{4}$ (GB walks) and in the cone $\frac{3\pi}{4}$ (Gessel walks).}
\label{fig:simplewalks}
\end{figure}

Gouyou-Beauchamps and Gessel walks in the quarter plane are in bijection with simple walks (i.e., walks with step set $\{(1, 0), (-1,0), (0, 1), (0,-1)\}$) staying in cones of angles $\pi/4$ and $3\pi/4$, respectively. When examining the drift diagram of these models in these new cones (see Figure~\ref{fig:simplewalks}) one observes that the reluctant region has an angle equal to $3\pi/4$ and $\pi/4$, respectively.   This means that the regions corresponding to directed walks are cones of angle $\pi/2$, a phenomenon which also occurs for the simple walk itself. The reluctant region is thus the polar cone to the free region. We do not know any simple explanation of this fact, however we can prove that these three models are the only models of simple walks (on a square lattice) in a two dimensional cone that have this structure.

Nice decompositions into cones also occur for the drift diagram of Double Tandem and Double Kreweras models of Bousquet-M{\'e}lou and Mishna~\cite{BoMi10}, when we describe them as simple walks on a triangular lattice in cones of angles $\pi/3$, and $2\pi/3$. As an open question, we wonder if there exists a general explanation to this phenomenon of simple walks in different cones.  For example, can it happen in higher dimensions?

\section{Conclusion}
\label{sec:conclusion}
Central weightings provide a simple but efficient way to understand the shifts in asymptotic behaviour of weighted lattice path models. They put the conjecture of Garbit, Mustapha and Raschel in a new light, and have strong connections to areas of combinatorics, probability theory, and the random generation of discrete structures.

All of the small step weighted D-finite models in two dimensions found by Kauers and Yatchak~\cite{KaYa15} consist of a centrally weighted equivalence class, except for their family 0 (which consists of an infinite number of equivalence classes). It is thus natural to wonder how many equivalence classes of D-finite models---with larger steps or in higher dimensions--- exist, and how many equivalence classes contain an unweighted model (families 2 and 3 of Kauers and Yatchak do not contain an unweighted model).

Furthermore, there are still many open problems for walks with small steps in a quadrant.  For example, it is still unknown for many unweighted lattice path models whether the univariate generating function $Q(1,1;t)$ of walks with no constraint on the endpoint is D-finite. For the models in which this question is still open, one would expect the univariate generating function to be non-D-finite, since the trivariate generating function $Q(x,y;t)$ is not~\cite{KuRa12,BoRaSa14}.  With our formalism we can see that there exist models with an infinite number of non-D-finite evaluations $Q(\alpha_1,\alpha_2;t)$ of the generating function.  Indeed, any central weighting of a model such that $\arccos(-c)$ is not commensurate with $\pi$ (see \eqref{eq:c} for the definition of $c$) will work by a result of Duraj~\cite{Du14}, which implies that the critical exponent for the model is not rational\footnote{Any D-finite lattice path generating function is an example of a \emph{G-function}, meaning in particular that the number of lattice walks cannot have asymptotic growth of the form $C \rho^n n^{\alpha}$, with $\alpha$ irrational.  More details can be found in the work of Bostan, Raschel and Salvy~\cite{BoRaSa14} which used this argument to show the non-D-finiteness of excursion generating functions $Q(0,0;t)$.}. Proposition~\ref{prop:Qm=Q} relates the non-D-finite generating function of the weighted model and the evaluated unweighted generating function $Q(\alpha_1,\alpha_2;t)$.

\begin{cor}
\label{thm:infdfinite}
There are an infinite number of two dimensional weighted lattice models with non-D-finite generating function. 
\end{cor}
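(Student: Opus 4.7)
The plan is to exhibit a single step set $\mathcal S$ whose covariance factor $c$ (see Equation~\eqref{eq:c}) satisfies $\arccos(-c)/\pi \notin \mathbb{Q}$, and then show that every central weighting of $\mathcal S$ lying in the reluctant regime yields a non-D-finite generating function. Since, by the description in Section~\ref{sec:Diagrams}, the reluctant region is an open subset of the two-parameter space of central weights $(a,b)$, varying these parameters continuously through it produces infinitely (indeed uncountably) many distinct weighted models, each non-D-finite.

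The argument would proceed as follows. By Lemma~\ref{lem:c_independent_weights}, $c$ is determined by the step set alone and is invariant under the choice of central weighting, so the irrationality of $\arccos(-c)/\pi$ is inherited by every member of the central family. For any weights $(a,b)$ in the reluctant regime (where the unique minimizer of the inventory $S$ on $\mathcal Q$ lies in the interior), Duraj~\cite{Du14} provides an asymptotic estimate of the form $[t^n] Q_{\mathfrak a}(1,1;t) \sim C \, \rho^n \, n^{-p_1/2 - 1}$ with $p_1 = \pi/\arccos(-c)$, hence the critical exponent is irrational. Since every D-finite power series is a G-function, its coefficients cannot satisfy an asymptotic of the form $C \rho^n n^{\alpha}$ with $\alpha$ irrational; this forces $Q_{\mathfrak a}(1,1;t)$ to be non-D-finite. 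Finally, distinct choices of $(a,b)$ produce distinct weighted generating functions (differing already in the low-order coefficients counting short walks), so this construction supplies the required infinite family of non-D-finite models.

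The main obstacle is verifying the initial hypothesis: we must produce at least one step set $\mathcal S$ for which $\arccos(-c)/\pi$ is irrational. Note that several natural examples fail this requirement; for instance the Gouyou-Beauchamps model gives $c = -\sqrt{2}/2$, so $\arccos(-c) = \pi/4$, which is commensurate with $\pi$. The clean tool available here is the Niven--Conway--Jones theorem, which states that the only rational values of $\cos\theta$ with $\theta/\pi \in \mathbb{Q}$ are $0, \pm 1/2, \pm 1$. It therefore suffices to produce any step set whose covariance factor is rational and distinct from these five exceptional values; since $c$ is an explicit algebraic function of the step data and takes a rich range of values as $\mathcal S$ varies (in particular as we allow asymmetric or larger step sets), a direct computation on a sufficiently generic example completes the argument.
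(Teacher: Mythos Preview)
Your approach is essentially the same as the paper's: pick a step set with $\arccos(-c)/\pi$ irrational, invoke Lemma~\ref{lem:c_independent_weights} to propagate this to every central weighting, apply Duraj's asymptotics in the reluctant regime, and conclude non-D-finiteness via the $G$-function obstruction. The paper's argument is the terse paragraph immediately preceding the corollary, which relies implicitly on the fact (established in \cite{BoRaSa14}) that many of the small-step quadrant models already have $\arccos(-c)/\pi\notin\mathbb{Q}$; you instead propose to manufacture such an example via the Niven--Conway--Jones theorem, which is a pleasant and self-contained alternative.

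Two small corrections. First, the reluctant exponent in Table~\ref{tab:KR} (and in Duraj's theorem) is $-p_1-1$, not $-p_1/2-1$; the latter is the transitional exponent. This does not affect the argument, since irrationality of $p_1$ forces irrationality of either. Second, your claim that ``every D-finite power series is a $G$-function'' is too strong as stated; what is true (and sufficient here) is that a D-finite series with rational coefficients of at most exponential growth---which lattice path generating functions certainly are---is a $G$-function, and $G$-functions cannot have dominant asymptotics $C\rho^n n^{\alpha}$ with $\alpha$ irrational. Finally, your Niven strategy requires $c$ itself to be rational, but $c=S_{xy}/\sqrt{S_{xx}S_{yy}}$ typically involves a square root even when the critical point has rational coordinates; you should either exhibit a concrete model where $S_{xx}S_{yy}$ happens to be a perfect square and $c\notin\{0,\pm\tfrac12,\pm1\}$, or simply cite one of the explicit infinite-group models from \cite{BoRaSa14} where the irrationality of $\arccos(-c)/\pi$ has already been checked.
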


Finally, the present work could be useful for the studies of specific models of walks. As an example, it would be interesting to understand the model with simple steps ($(\pm 1,0),(0,\pm 1)$) in cones with an angle $\theta$, potentially not commensurable with $\pi$. When is such a model D-finite and what are the different universality classes?

\paragraph{Acknowledgments} 
KR would like to thank C.\ Lecouvey and P.\ Tarrago for interesting discussions concerning their work~\cite{LeTa16}. KR would also like to thank R.\ Garbit for enlightening discussions. MM and JC were partially supported by Natural Sciences and Engineering Research Council of Canada (NSERC) Discovery Grant 31-611453. JC was also supported by the Pacific Institute for the Mathematical Sciences and by the French "Agence Nationale de la Recherche", project A3 ANR-08-BLAN-0190. SM was partially supported by an NSERC Canadian Graduate Scholarship and a David R. Cheriton Graduate Scholarship. Finally, the authors would like to thank the anonymous reviewers for their valuable comments. 

\bibliographystyle{plain}
\bibliography{bibl}

\appendix

\section{Values of the harmonic function}
\label{sec:values_harmonic_function}
We now give the values of the universal constant $\kappa$ and harmonic function $V^{[n]}(i,j)$ for the various models.

\paragraph{Balanced $a=b=1$} Here $\kappa = \frac{8}{\pi}$ and
\[ V^{[n]}(i,j) = \frac{(i+1)(j+1)(i+j+2)(i+2j+3)}{6}. \]

\paragraph{Free $\left(\sqrt{b}<a<b\right)$} 
Here $\kappa = 1$ and 
\begin{align*} 
V^{[n]}(i,j) = a^{-(4+2i+2j)}b^{-(2+2j)}&\left(\left(a^{1+j}-1\right)\left(a^{1+j}+1\right)\left(a^{2+i+j}-b^{2+i+j}\right)\left(a^{2+i+j}+b^{2+i+j}\right)b^{-i-1} \right.  \\
&\left. \qquad\qquad - \left(a^{2+i+j}-1\right)\left(a^{2+i+j}+1\right)\left(a^{1+j}-b^{1+j}\right)\left(a^{1+j}+b^{1+j}\right) \right).
\end{align*}

\paragraph{Reluctant $\left(a<1,b<1\right)$}
Here $\kappa = \frac{64}{\pi(b-1)^4}$ and  
\begin{align*} 
V^{[n]}(i,j) = \frac{(1+j)(1+i)(3+i+2j)(2+i+j)}{a^ib^j} &\left(\frac{a^2b^2+a^2b-4ab+b+1}{(a-1)^4}\right. \\
&\qquad+ \left. (-1)^{n+i}\frac{a^2b^2+a^2b+4ab+b+1}{(a+1)^4}\right).
\end{align*}

\paragraph{Directed 1 $\left(b>1, \sqrt{b}>a\right)$}
Here $\kappa = \frac{\sqrt{2}}{\sqrt{\pi}b^2}$ and  
\[ V^{[n]}(i,j) = \left(\frac{b^{3+i+2j}(1+i)+\left(b^{1+j}-b^{2+i+j}\right)(3+i+2j)-i-1}{a^ib^{i/2+2j}}\right) \left(\frac{1}{(\sqrt{b}-a)^2}+\frac{(-1)^{i+n}}{(\sqrt{b}+a)^2}\right). \]

\paragraph{Directed 2 $\left(a>1, a>b\right)$}
Here $\kappa = \frac{(a+1)^3\sqrt{a}}{2\sqrt{\pi}(a-b)^2}$ and  
\[ V^{[n]}(i,j) =  (2+i+j)\left(a^{-2-j}-a^j\right)b^{-j}a^{-1-i} + (1+j)\left(1-a^{-4-2i-2j}\right)b^{-j}a^j.\]

\paragraph{Axial 1 $\left(a=b>1\right)$}
Here $\kappa = \frac{b+1}{\sqrt{b\pi}}$ and  
\[ V^{[n]}(i,j) =  (j+1)\left(1-b^{-2(2+i+j)}\right)+b^{-i-1}(i+2+j)\left(b^{-2(1+j)}-1\right). \]

\paragraph{Axial 2 $\left(b=a^2>1\right)$}
Here $\kappa = \frac{\sqrt{2}}{a^6\sqrt{\pi}}$ and  
\[ V^{[n]}(i,j) =  \left(a^6-a^{-2i-4j}\right)(1+i)+\left(a^{2-2i-2j}-a^{4-2j}\right)(3+i+2j). \]

\paragraph{Transitional 1 $\left(a=1,b>1\right)$}
Here $\kappa = \frac{16}{3\pi(1-b)^2}$ and  
\[ V^{[n]}(i,j) =  (j+1)(i+1)(i+3+2j)(i+2+j)b^{-j}. \]

\paragraph{Transitional 2 $\left(b=1,a>1\right)$}
Here $\kappa = \frac{8}{3\pi}$ and  
\[ V^{[n]}(i,j) =  a^{-i}(j+1)(i+1)(i+3+2j)(i+2+j)\left(\frac{1}{(1-a)^2} + \frac{(-1)^{n+i}}{(1+a)^2} \right). \]

\end{document}